\documentclass[11pt]{article}

\usepackage[margin=1in]{geometry}
\usepackage[T1]{fontenc}
\usepackage[utf8]{inputenc}
\usepackage{lmodern}
\usepackage{amsmath,amssymb,amsthm,mathtools}
\usepackage{aliascnt}
\usepackage{bm}
\usepackage{enumitem}
\usepackage{algorithm}
\usepackage{algpseudocode}
\usepackage{graphicx}
\usepackage{hyperref}
\usepackage{booktabs}
\hypersetup{hidelinks,hypertexnames=false}
\usepackage{microtype}
\usepackage{mathrsfs}
\usepackage{xcolor}
\usepackage[nameinlink,capitalize]{cleveref}
\usepackage[bottom]{footmisc}
\usepackage{caption}

\AtBeginDocument{%
  \setlength{\abovedisplayskip}{7pt plus 2pt minus 3pt}%
  \setlength{\belowdisplayskip}{7pt plus 2pt minus 3pt}%
  \setlength{\abovedisplayshortskip}{4pt plus 2pt minus 2pt}%
  \setlength{\belowdisplayshortskip}{4pt plus 2pt minus 2pt}%
}

\usepackage[compact]{titlesec}

\titlespacing*{\section}
  {0pt}{1.55ex plus 0.35ex minus 0.25ex}{0.75ex plus 0.15ex}

\titlespacing*{\subsection}
  {0pt}{1.25ex plus 0.30ex minus 0.20ex}{0.55ex plus 0.12ex}

\titlespacing*{\subsubsection}
  {0pt}{1.00ex plus 0.25ex minus 0.15ex}{0.45ex plus 0.10ex}

\titleformat{\section}
  {\normalfont\large\bfseries}{\thesection.}{0.5em}{}

\titleformat{\subsection}
  {\normalfont\normalsize\bfseries}{\thesubsection.}{0.5em}{}

\titleformat{\subsubsection}
  {\normalfont\normalsize\itshape}{\thesubsubsection.}{0.5em}{}

\setlist{
  topsep=2pt plus 1pt minus 1pt,
  itemsep=1pt plus 0.5pt,
  parsep=0pt,
  partopsep=0pt,
  leftmargin=*
}

\makeatletter
\def\thm@space@setup{%
  \thm@preskip=6pt plus 2pt minus 2pt
  \thm@postskip=6pt plus 2pt minus 2pt
}
\makeatother

\makeatletter
\renewcommand{\maketitle}{%
  \begin{center}
    {\Large\bfseries \@title\par}
    \vskip 0.45em
    {\normalsize
      \lineskip .5em
      \begin{tabular}[t]{c}
        \@author
      \end{tabular}\par}
    \vskip 0.35em
    {\small \@date\par}
  \end{center}
  \par\vskip 0.35em
  \@thanks
  \vskip 0.7em
}
\makeatother

\renewenvironment{abstract}
  {\small\begin{center}\bfseries Abstract\end{center}\vspace{-0.6em}\small}
  {\par\vspace{0.6em}}

\newtheorem{theorem}{Theorem}[section]

\newaliascnt{lemma}{theorem}
\newtheorem{lemma}[lemma]{Lemma}
\aliascntresetthe{lemma}

\newaliascnt{proposition}{theorem}
\newtheorem{proposition}[proposition]{Proposition}
\aliascntresetthe{proposition}

\newaliascnt{corollary}{theorem}

\aliascntresetthe{corollary}

\theoremstyle{definition}

\newaliascnt{definition}{theorem}
\newtheorem{definition}[definition]{Definition}
\aliascntresetthe{definition}

\newaliascnt{assumption}{theorem}

\aliascntresetthe{assumption}

\theoremstyle{remark}

\newaliascnt{remark}{theorem}

\aliascntresetthe{remark}

\crefname{theorem}{theorem}{theorems}
\Crefname{theorem}{Theorem}{Theorems}

\crefname{lemma}{lemma}{lemmas}
\Crefname{lemma}{Lemma}{Lemmas}

\crefname{proposition}{proposition}{propositions}
\Crefname{proposition}{Proposition}{Propositions}

\crefname{corollary}{corollary}{corollaries}
\Crefname{corollary}{Corollary}{Corollaries}

\crefname{definition}{definition}{definitions}
\Crefname{definition}{Definition}{Definitions}

\crefname{assumption}{assumption}{assumptions}
\Crefname{assumption}{Assumption}{Assumptions}

\crefname{remark}{remark}{remarks}
\Crefname{remark}{Remark}{Remarks}

\newcounter{prooflink}
\crefname{prooflink}{proof}{proofs}
\Crefname{prooflink}{Proof}{Proofs}
\newcommand{\prooflink}[1]{\par\noindent{\small\hyperref[proof:#1]{Proof in the appendix.}}}

\newcommand{\R}{\mathbb R}

\newcommand{\tr}{\operatorname{tr}}
\newcommand{\spec}{\operatorname{spec}}
\newcommand{\End}{\operatorname{End}}
\newcommand{\TT}{\mathcal T}
\newcommand{\abs}[1]{\left\lvert #1\right\rvert}

\title{Chebyshev-Exact Acceleration under Hessian Variation,\\I: Sine-Jacobi Method}
\author{Dmitry Pasechnyuk-Vilensky\thanks{Mohamed bin Zayed University of Artificial Intelligence, Abu Dhabi, UAE. \texttt{dmivilensky1@gmail.com}}, Martin Tak\'{a}\v{c}\thanks{Mohamed bin Zayed University of Artificial Intelligence, Abu Dhabi, UAE.}}
\date{}

\begin{document}
\maketitle

\begin{abstract}
We study finite-horizon one-gradient realizations with the Chebyshev minimax
terminal residual on \([\mu,L]\).  Under time-dependent Hessian perturbations,
the terminal first variation is governed by a time-ordered spectral kernel
\(K_s(\lambda,\nu)\); its sharp \(\ell_2\) gain is \(A_N\).
For the prefix-exact Chebyshev recurrence,
\[
        A_N^{\rm pref}
        =\frac{\epsilon_N^\star}{L-\mu}
        \left(4N^2+16\sum_{m=1}^{N-1}m^2\right)^{1/2}
        =\frac4{\sqrt3}\frac{N^{3/2}}{L-\mu}\epsilon_N^\star(1+o(1)),
\]
and this is sharp in the causal two-term class with Chebyshev exactness at every prefix.
For terminal-only exactness, Jacobi coordinates give \(P_N=2^{1-N}T_N\): the spectrum is fixed at the midpoint Chebyshev nodes, while the spectral weights parametrize the realizations.  The sine weights give a final-exact Jacobi method with the same terminal residual and
\[
        A_N(J_N^{\sin})
        =2\sqrt{c_{\sin}}\frac{N^{3/2}}{L-\mu}\epsilon_N^\star(1+o(1)),
        \; 2\sqrt{c_{\sin}}\approx2.137936<4/\sqrt3.
\]
Thus the Chebyshev terminal polynomial does not determine the first-order Hessian-drift gain.  The experiments show the finite-horizon effect: lower stochastic curvature overhead, larger admissible-block frontiers, accurate time-varying quadratic predictions, and lower restart cost on an endpoint-coupled smooth strongly convex GLM.
\end{abstract}

\section*{Introduction}

For a fixed quadratic with spectrum in \([\mu,L]\), the best degree-\(N\) residual is \(p_N^\star(\lambda)=T_N(z(\lambda))/T_N(\eta)\), where \(z(\lambda)=(L+\mu-2\lambda)/(L-\mu)\) and \(\eta=(L+\mu)/(L-\mu)\).  This terminal fixed-Hessian transfer function does not specify the ordered product of gradient-query factors that realizes it.  Under Hessian drift, that order matters.

For a perturbation \(H_s=H+\delta B_s\), an insertion from the \(\nu\)-eigenspace to the \(\lambda\)-eigenspace at time \(s\) has coefficient \(K_s(\lambda,\nu)\).  We measure the sharp spectral insertion gain by
\[
        A_N=\sup_{\lambda,\nu\in[\mu,L]}
        \left(\sum_{s=0}^{N-1}|K_s(\lambda,\nu)|^2\right)^{1/2}.
\]
For each spectral pair this is exactly the \(\ell_2\)-operator norm from Hessian insertions to the terminal first variation, hence a realization-level invariant.

For the prefix-exact Chebyshev recurrence we compute
\[
        A_N^{\rm pref}
        =\frac{\epsilon_N^\star}{L-\mu}
        \left(4N^2+16\sum_{m=1}^{N-1}m^2\right)^{1/2}
        =\frac{4}{\sqrt3}\frac{N^{3/2}}{L-\mu}
        \epsilon_N^\star(1+o(1)),
        \; \epsilon_N^\star=|T_N(\eta)|^{-1},
\]
and prove sharpness in the causal two-term class with Chebyshev exactness at every prefix.

We then pass to terminal-only exactness.  In Jacobi-normalized two-term coordinates, \(P_{s+1}(z)=(z-b_s)P_s(z)-a_s^2P_{s-1}(z)\), \(a_s>0\), and final Chebyshev exactness is \(P_N=2^{1-N}T_N\).  Thus the spectrum is fixed at the midpoint Chebyshev nodes, while the spectral weights remain free.  The first-variation kernel factorizes into a left principal polynomial and a right-tail polynomial.

The sine--Jacobi realization uses weights proportional to \(\sin((2j-1)\pi/(2N))\).  It is final-Chebyshev exact and satisfies
\[
        A_N(J_N^{\sin})
        =2\sqrt{c_{\sin}}\frac{N^{3/2}}{L-\mu}
        \epsilon_N^\star(1+o(1)),
        \;
        2\sqrt{c_{\sin}}\approx 2.137936
        <\frac4{\sqrt3}\approx2.309401.
\]
Thus the prefix and sine--Jacobi realizations share the same terminal Chebyshev polynomial but have different first-order Hessian-drift gains.

The experiments keep the terminal polynomial fixed and vary only the realization.  They compare finite-\(N\) drift constants, stochastic admissibility frontiers, time-varying quadratic packet responses, and finite-radius restarted blocks on an endpoint-coupled smooth strongly convex GLM.

\paragraph{Main results.}
We prove: (1) the time-ordered first-variation kernel and gain \(A_N\); (2) the exact prefix gain; (3) its sharpness in the causal prefix-exact two-term class; (4) the Jacobi parametrization and kernel factorization for terminal-exact realizations; (5) the sine--Jacobi construction and its constant \(2\sqrt{c_{\sin}}\); and (6) finite-horizon experiments comparing the two exact realizations.

The appendices contain the proofs, inverse-spectral input, sine-lattice estimates, the explicit evaluation of \(c_{\sin}\), and additional experimental diagnostics.

\paragraph{Related work.}
Chebyshev acceleration is the classical minimax-polynomial construction for a prescribed spectral interval; see \cite{Achieser,Trefethen,Varga,Saad} and the optimization references \cite{Polyak1964,Nesterov1983,NemirovskiYudin,Nesterov2018,dAspremontScieurTaylor}.  The finite-horizon scheduled-gradient formulation, the cosine realization, and the prefix-exact two-term realization were isolated in \cite{TakacTakacScheduledGD}.  The object studied here is not the terminal polynomial alone, but its ordered realization under Hessian variation: two methods can agree on every fixed Hessian and differ under time-varying Hessians.

Polynomial, interpolation, and semidefinite optimality frameworks, including performance estimation \cite{DroriTeboulle2014,TaylorHendrickxGlineur2017} and optimized first-order methods \cite{KimFessler2016}, optimize terminal oracle criteria.  The gain \(A_N\) is orthogonal to those criteria: it is the sharp first variation of an ordered product under spectral Hessian insertions.

Work on inexact or noisy gradients, including inexact-oracle models \cite{DevolderGlineurNesterov2014}, noisy acceleration \cite{CohenDiakonikolasOrecchia2018}, and robust/IQC analyses \cite{LessardRechtPackard2016,AybatFallahGurbuzbalabanOzdaglar2020,MohammadiRazaviyaynJovanovic2021}, mainly treats additive errors or robustness of a fixed dynamical representation.  Here the perturbation is multiplicative and spectral: an insertion from \(\nu\) to \(\lambda\) is propagated on the right at \(\nu\) and on the left at \(\lambda\), producing \(K_s(\lambda,\nu)\).

The Jacobi coordinates are the standard finite inverse-spectral coordinates for orthogonal polynomials and Jacobi matrices \cite{Szego,Ismail,Gautschi,Teschl}.  Final Chebyshev exactness fixes the midpoint Chebyshev spectrum, while the spectral weights parametrize the remaining two-term realizations; the sine weights pick a persymmetric representative with a smaller drift constant.

\section{Realizations and first-order drift gain}
\label{sec:problem-setting}

Fix
$
0<\mu<L,
\;
\Delta:=L-\mu,
\;
\eta:=\frac{L+\mu}{L-\mu}>1,
\;
z(\lambda):=\eta-\frac{2\lambda}{\Delta}.
$
The terminal polynomial throughout the paper is
\[
        p_N^\star(\lambda)=\frac{T_N(z(\lambda))}{T_N(\eta)},
        \;
        \epsilon_N^\star:=\frac1{|T_N(\eta)|}.
\]
Here and below, \(T_k\) and \(U_k\) denote the Chebyshev polynomials of the
first and second kind, respectively.

Let \(X\) be a finite-dimensional real Hilbert space.  A length-\(N\)
one-gradient realization with memory \(m\) is a tuple
\[
\mathcal R=(m,b,\ell,\{P_s,g_s,q_s\}_{s=0}^{N-1}),
\;
b,\ell,g_s,q_s\in\R^m,
\quad
P_s\in\R^{m\times m},
\]
acting on \(Y_s\in\R^m\otimes X\) by
\[
Y_{s+1}
=(P_s\otimes I_X)Y_s
-(g_s\otimes I_X)\nabla f((q_s^\top\otimes I_X)Y_s),
\;
Y_0=b\otimes x_0,
\;
x_N=(\ell^\top\otimes I_X)Y_N.
\]
For a quadratic \(f_H(x)=\frac12\langle Hx,x\rangle\), define
\[
        C_s(\lambda):=P_s-\lambda g_sq_s^\top,
        \;
        p_{\mathcal R,N}(\lambda)
        :=\ell^\top C_{N-1}(\lambda)\cdots C_0(\lambda)b.
\]
The realization is final-Chebyshev exact if
$
        p_{\mathcal R,N}(\lambda)=p_N^\star(\lambda)
        \; (\lambda\in[\mu,L]).
$

Now let the Hessian vary as \(H_s=H+\delta B_s\).  If an insertion \(B_s\)
maps the \(\nu\)-eigenspace of \(H\) to the \(\lambda\)-eigenspace, the factors
to the right of the insertion propagate at \(\nu\), while the factors to the
left propagate at \(\lambda\).  This gives the scalar first-variation kernel
\[
K_{s,\mathcal R}(\lambda,\nu)
:=-\ell^\top
C_{N-1}(\lambda)\cdots C_{s+1}(\lambda)
 g_sq_s^\top
C_{s-1}(\nu)\cdots C_0(\nu)b,
\]
with empty products equal to the identity.

\begin{lemma}[First-order expansion]\label{lem:first-order-expansion}
Let \(H_s=H+\delta B_s\).  The derivative of the output map at \(\delta=0\) is
\[
D\TT_{\mathcal R,H}[B_0,\ldots,B_{N-1}]
=
\sum_{s=0}^{N-1}\mathcal L_{s,H}(B_s),
\]
and on the spectral pair \((\lambda,\nu)\),
$
\mathcal L_{s,H}(B_s)_{\lambda\nu}
=
K_{s,\mathcal R}(\lambda,\nu)(B_s)_{\lambda\nu}.
$
\end{lemma}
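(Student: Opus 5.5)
The plan is to write the output map on quadratics explicitly as an ordered product of block operators, differentiate it with the product rule, and then read off the matrix entries in an eigenbasis of \(H\). For the time-varying quadratic with Hessians \(H_s\), we have \(\nabla f_s(x)=H_sx\). The update therefore becomes \(Y_{s+1}=\mathcal C_s(H_s)Y_s\), where
\[
\mathcal C_s(H_s):=P_s\otimes I_X-(g_sq_s^\top)\otimes H_s .
\]
This gives the output map
\[
\TT_{\mathcal R}(H_0,\ldots,H_{N-1})x_0=(\ell^\top\otimes I_X)\,\mathcal C_{N-1}(H_{N-1})\cdots\mathcal C_0(H_0)(b\otimes x_0).
\]
Each factor is affine in its own \(H_s\), so the map is polynomial in \(\delta\).

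Setting \(H_s=H+\delta B_s\), the product rule gives
\[
\frac{d}{d\delta}\Big|_{\delta=0}\TT=\sum_{s=0}^{N-1}(\ell^\top\otimes I)\,\mathcal C_{N-1}(H)\cdots\mathcal C_{s+1}(H)\,\bigl(-(g_sq_s^\top)\otimes B_s\bigr)\,\mathcal C_{s-1}(H)\cdots\mathcal C_0(H)(b\otimes\cdot).
\]
Define \(\mathcal L_{s,H}(B_s)\) to be the \(s\)-th summand. This gives the additive decomposition over insertion times directly, and each \(\mathcal L_{s,H}\) is linear in \(B_s\).

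For the spectral form, take an orthonormal eigenbasis of \(H\). Let \(u_\nu\) be a unit eigenvector with \(Hu_\nu=\nu u_\nu\), and similarly for \(u_\lambda\). The key observation is that on \(\R^m\otimes\operatorname{span}(u_\nu)\) the operator \(\mathcal C_r(H)\) acts as \(C_r(\nu)\otimes I\). Hence for any \(c\in\R^m\),
\[
\mathcal C_{s-1}(H)\cdots\mathcal C_0(H)(c\otimes u_\nu)=\bigl(C_{s-1}(\nu)\cdots C_0(\nu)c\bigr)\otimes u_\nu .
\]
Next we apply \(-(g_sq_s^\top)\otimes B_s\) and pair with \(u_\lambda\) on the left. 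The factors to the left of \(s\) are treated the same way in the adjoint direction. Since \(\mathcal C_r(H)\) is self-adjoint on the \(X\) factor and commutes with the spectral projectors of \(H\), the \(\lambda\)-component of \(B_su_\nu\) propagates through \(C_r(\lambda)\) for \(r>s\). The scalar result is \(\ell^\top\) applied to that product, multiplied by \(\langle u_\lambda,B_su_\nu\rangle=(B_s)_{\lambda\nu}\). Collecting the scalars gives exactly \(K_{s,\mathcal R}(\lambda,\nu)(B_s)_{\lambda\nu}\). This uses the fact that \(\langle u_\lambda,(M\otimes I)(v\otimes w)\rangle\) factors into a scalar in \(\R^m\) times \(\langle u_\lambda,w\rangle\), after projecting \(\R^m\) by \(\ell^\top\).

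The main obstacle is bookkeeping rather than anything deep. When \(H\) has repeated eigenvalues, the entry \((B_s)_{\lambda\nu}\) has to be read as the block \(\Pi_\lambda B_s\Pi_\nu\) between spectral projectors. The argument is unchanged, because \(\mathcal C_r(H)\) acts as \(C_r(\lambda)\otimes I\) on the whole range of \(\Pi_\lambda\). I will state it in projector form, namely \(\Pi_\lambda\,\mathcal L_{s,H}(B_s)\,\Pi_\nu=K_{s,\mathcal R}(\lambda,\nu)\,\Pi_\lambda B_s\Pi_\nu\). This is proved by inserting \(I=\sum_\lambda\Pi_\lambda\) on both sides of \(B_s\), and it covers the statement with either convention.
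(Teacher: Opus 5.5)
Your proposal is correct and follows essentially the same route as the paper: you write the output as an ordered product of the affine block factors $P_s\otimes I_X-g_sq_s^\top\otimes H_s$, apply the product rule, and identify the $(\lambda,\nu)$ block via the fact that each unperturbed factor acts as $C_r(\rho)\otimes I$ on the fiber $\R^m\otimes X_\rho$, concluding in the same projector form $\Pi_\lambda\mathcal L_{s,H}(B_s)\Pi_\nu=K_{s,\mathcal R}(\lambda,\nu)\,\Pi_\lambda B_s\Pi_\nu$. The only cosmetic difference is that the paper simply pushes $\Pi_\lambda B_s x$ forward through the left factors rather than arguing ``in the adjoint direction''; all you need there is that $I\otimes\Pi_\lambda$ commutes with each unperturbed factor, which holds because $\Pi_\lambda$ commutes with $H$ (the factors themselves need not be self-adjoint).
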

\prooflink{lem:first-order-expansion}

\begin{definition}[First-order Hessian-drift gain]
The realization-level drift gain is
\[
A_N(\mathcal R)
:=
\sup_{\lambda,\nu\in[\mu,L]}
\left(
\sum_{s=0}^{N-1}|K_{s,\mathcal R}(\lambda,\nu)|^2
\right)^{1/2}.
\]
\end{definition}
For each fixed spectral pair, this is the sharp \(\ell_2\)-operator norm of
\((b_s)_s\mapsto\sum_sK_{s,\mathcal R}(\lambda,\nu)b_s\).  The rest of the
paper computes and compares this single quantity for concrete
final-Chebyshev-exact two-term realizations.

\section{Prefix-exact Chebyshev recurrence and exact gain}
\label{sec:cheb-upper}

We recall the prefix-exact two-term Chebyshev realization of \cite{TakacTakacScheduledGD} and compute its Hessian-drift first variation. Define
$
\beta_t:=\frac{T_{t-1}(\eta)}{T_{t+1}(\eta)},
\; t\ge1.
$
The method is initialized by
$
        x_1=x_0-\frac{2}{L+\mu}\nabla f(x_0),
$
and, for \(t\ge1\), evolves by
\[
        x_{t+1}
        =x_t+\beta_t(x_t-x_{t-1})
        -(1+\beta_t)\frac{2}{L+\mu}\nabla f(x_t).
\]

\begin{theorem}[Prefix-exact Chebyshev recurrence]\label{thm:prefix-exact-chebyshev-recurrence}
On every quadratic objective \(f(x)=\frac12\langle Hx,x\rangle\) with
\(\spec(H)\subset[\mu,L]\), the recurrence satisfies
$
        x_t-x^\star=p_t^\star(H)(x_0-x^\star)
        \;
        \forall t\ge0.
$
\end{theorem}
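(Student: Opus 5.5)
Since \(\nabla f(x)=H(x-x^\star)\) and every update is affine with step coefficients summing consistently, we will work with the error \(e_t:=x_t-x^\star\), for which the recurrence reads
\[
e_1=\Bigl(I-\tfrac{2}{L+\mu}H\Bigr)e_0,\qquad
e_{t+1}=(1+\beta_t)\Bigl(I-\tfrac{2}{L+\mu}H\Bigr)e_t-\beta_t e_{t-1}\quad(t\ge1).
\]
By the spectral theorem, it suffices to show that the scalar sequence \(r_t(\lambda)\) defined by the same recurrence with \(H\) replaced by \(\lambda\), \(r_0=1\), satisfies \(r_t=p_t^\star\); then \(e_t=r_t(H)e_0=p_t^\star(H)e_0\). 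In fact the identity holds for all \(\lambda\in\R\), so the restriction \(\spec(H)\subset[\mu,L]\) is only needed to make sense of \(p_t^\star\) as the minimax residual, not for the algebra.

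The key observation is that \(1-\frac{2\lambda}{L+\mu}=z(\lambda)/\eta\), since \(z(\lambda)=\eta-2\lambda/\Delta\) and \(\eta\Delta=L+\mu\). The base cases are then immediate: \(r_0=1=T_0(z)/T_0(\eta)\) and \(r_1=z/\eta=T_1(z)/T_1(\eta)\). For the induction, we substitute \(r_t=T_t(z)/T_t(\eta)\) and \(r_{t-1}=T_{t-1}(z)/T_{t-1}(\eta)\). With \(\beta_t=T_{t-1}(\eta)/T_{t+1}(\eta)\), we will use the three-term relation \(T_{t+1}(\eta)+T_{t-1}(\eta)=2\eta T_t(\eta)\), which gives
\[
1+\beta_t=\frac{2\eta T_t(\eta)}{T_{t+1}(\eta)}.
\]
Hence
\[
r_{t+1}=\frac{2\eta T_t(\eta)}{T_{t+1}(\eta)}\cdot\frac{z}{\eta}\cdot\frac{T_t(z)}{T_t(\eta)}-\frac{T_{t-1}(\eta)}{T_{t+1}(\eta)}\cdot\frac{T_{t-1}(z)}{T_{t-1}(\eta)}
=\frac{2zT_t(z)-T_{t-1}(z)}{T_{t+1}(\eta)}=\frac{T_{t+1}(z)}{T_{t+1}(\eta)},
\]
which closes the induction.

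The only point requiring care is that every denominator is nonzero. This holds because \(\eta>1\) implies \(T_k(\eta)=\cosh(k\operatorname{arccosh}\eta)\ge1\), and in particular \(T_k(\eta)>0\) for all \(k\ge0\). I do not expect a genuine obstacle: the main work is the bookkeeping identity \(1+\beta_t=2\eta T_t(\eta)/T_{t+1}(\eta)\), together with the matching between the step size \(2/(L+\mu)\) and the rescaling \(z/\eta\).
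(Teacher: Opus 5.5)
Your proof is correct and takes essentially the same route as the paper: reduce to a scalar spectral component, use $1-\frac{2\lambda}{L+\mu}=z(\lambda)/\eta$ for the base case, and close the induction with the Chebyshev three-term relation at both $z(\lambda)$ and $\eta$ (which gives $1+\beta_t=2\eta T_t(\eta)/T_{t+1}(\eta)$), with $T_k(\eta)>0$ making every coefficient well defined. The only cosmetic difference is direction: you substitute the induction hypothesis into the recurrence and recover $T_{t+1}(z)/T_{t+1}(\eta)$, whereas the paper first shows that $p_t^\star$ satisfies the same recurrence and then compares.
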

\prooflink{thm:prefix-exact-chebyshev-recurrence}

\begin{theorem}[Exact prefix first-order kernel]\label{thm:prefix-kernel}
Let \(N\ge1\).  For the prefix-exact Chebyshev recurrence,
\[
K_0^{\rm pref}(\lambda,\nu)
=
-\frac{2}{\Delta T_N(\eta)}U_{N-1}(z(\lambda)),
\]
and, for \(1\le s\le N-1\),
\[
K_s^{\rm pref}(\lambda,\nu)
=
-\frac{4}{\Delta T_N(\eta)}
U_{N-s-1}(z(\lambda))T_s(z(\nu)).
\]
Consequently
\[
A_N^{\rm pref}
=
\frac{\epsilon_N^\star}{\Delta}
\left(4N^2+16\sum_{m=1}^{N-1}m^2\right)^{1/2}
=
\frac{2\epsilon_N^\star}{\Delta}
\sqrt{N^2+\frac23N(N-1)(2N-1)},
\]
and
\[
A_N^{\rm pref}
=
\frac{4}{\sqrt3}\frac{N^{3/2}}{\Delta}\epsilon_N^\star(1+o(1)).
\]
\end{theorem}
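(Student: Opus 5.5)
The plan is to compute the kernel directly from the definition of \(K_{s,\mathcal R}\), using the memory-two state \(Y_t=(x_t,x_{t-1})\) of the recurrence. Both products in the kernel are then read as trajectories of the scalar three-term recurrence.

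\emph{Right factor.} \(q_s^\top C_{s-1}(\nu)\cdots C_0(\nu)b\) is the gradient-query point at step \(s\) on the scalar quadratic with curvature \(\nu\). By \cref{thm:prefix-exact-chebyshev-recurrence} it equals \(p_s^\star(\nu)=T_s(z(\nu))/T_s(\eta)\). For \(s=0\) it is \(1\).

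\emph{Left factor.} \(\ell^\top C_{N-1}(\lambda)\cdots C_{s+1}(\lambda)g_s\) is the time-\(N\) value of the homogeneous recurrence at curvature \(\lambda\), started from the state \(x_s=0\), \(x_{s+1}=\gamma_s\). Here \(\gamma_s=(1+\beta_s)\tfrac{2}{L+\mu}\) for \(s\ge1\) and \(\gamma_0=\tfrac2{L+\mu}\). To evaluate it, use the normalization already implicit in \cref{thm:prefix-exact-chebyshev-recurrence}. Put \(y_t:=T_t(\eta)e_t\). Using \(\beta_t=T_{t-1}(\eta)/T_{t+1}(\eta)\), \(1+\beta_t=2\eta T_t(\eta)/T_{t+1}(\eta)\), and \(\tfrac{2\eta}{L+\mu}=\tfrac2\Delta\), the scalar recurrence becomes
\[
y_{t+1}=2z(\lambda)\,y_t-y_{t-1}.
\]
This is the step to verify carefully; it is the same computation that proves \cref{thm:prefix-exact-chebyshev-recurrence}.

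The solution with \(y_s=0\) and \(y_{s+1}=c\) is \(y_t=c\,U_{t-s-1}(z(\lambda))\). Hence the left factor equals
\[
\frac{T_{s+1}(\eta)\,\gamma_s\,U_{N-s-1}(z(\lambda))}{T_N(\eta)}.
\]
For \(s\ge1\), \(T_{s+1}(\eta)\gamma_s=\tfrac{4}{\Delta}T_s(\eta)\). Multiplying by the right factor \(T_s(z(\nu))/T_s(\eta)\) and by the sign in the definition of \(K_{s,\mathcal R}\) gives the stated \(K_s^{\rm pref}\). For \(s=0\), \(T_1(\eta)\gamma_0=\tfrac2\Delta\), which gives the stated \(K_0^{\rm pref}\).

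\emph{Gain.} On \([-1,1]\) we have \(|U_m(z)|\le m+1\) and \(|T_s(z)|\le1\), both attained at \(z=1\), that is, at \(\lambda=\nu=\mu\). Since \(z([\mu,L])=[-1,1]\), all summands are maximized simultaneously at \(\lambda=\nu=\mu\). So the supremum of \(\sum_s|K_s|^2\) is attained there and equals
\[
\frac{(\epsilon_N^\star)^2}{\Delta^2}\Bigl(4N^2+16\sum_{s=1}^{N-1}(N-s)^2\Bigr).
\]
Reindexing \(m=N-s\) and using \(\sum_{m=1}^{N-1}m^2=\tfrac16N(N-1)(2N-1)\) gives both closed forms. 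The leading term \(16N^3/3\) yields the asymptotic \(\tfrac4{\sqrt3}N^{3/2}\epsilon_N^\star/\Delta\).

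The main obstacle is bookkeeping rather than analysis. One must identify \(g_s\) and \(q_s\) correctly in the two-term state, including the special \(s=0\) step with no momentum. One must also check that the impulse enters as \((x_s,x_{s+1})=(0,\gamma_s)\) in the normalized \(y\)-variables, and not with a shifted index.
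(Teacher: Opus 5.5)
Your proposal is correct and follows the paper's proof essentially step for step: prefix residual \(T_s(z(\nu))/T_s(\eta)\), insertion coefficient \(\gamma_s\), tail transfer from the normalized recurrence \(y_{t+1}=2z\,y_t-y_{t-1}\) yielding \(U_{N-s-1}\), and the gain from \(|U_m|\le m+1\), \(|T_s|\le 1\), both attained at \(z=y=1\). The only differences are cosmetic. You read the three factors directly off the state-space kernel and fold \(\gamma_s\) into the tail initial data. The paper instead obtains the tail with unit data (its Chebyshev tail-fundamental-solution lemma) and multiplies by \(\gamma_s\) afterwards.
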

\prooflink{thm:prefix-kernel}

\section{Sharpness within the prefix-exact Chebyshev class}
\label{sec:prefix-lower-main}

The exact prefix value is also the best possible value under the stronger
constraint that the two-term recurrence be Chebyshev-exact at every prefix.
The auxiliary uniqueness statement for such recurrences is part of the proof
in Appendix~\ref{app:proofs-sec3}.

\begin{theorem}[Sharp lower bound for causal two-term every-step Chebyshev methods]\label{thm:prefix-lower}
Let \(N\ge1\).  Consider a causal two-term one-gradient recurrence initialized by
$
x_1=x_0-\gamma_0\nabla f(x_0),
$
and, for \(1\le t\le N-1\), evolved by
\(x_{t+1}=a_tx_t+b_tx_{t-1}-\gamma_t\nabla f(x_t)\).
Assume that, on every scalar quadratic objective with curvature
\(\lambda\in[\mu,L]\), its residual polynomials are prefix-exact Chebyshev:
$
r_t(\lambda)=p_t^\star(\lambda),
\;
0\le t\le N.
$
Then
\[
A_N
\ge
\frac{\epsilon_N^\star}{\Delta}
\left(4N^2+16\sum_{m=1}^{N-1}m^2\right)^{1/2}.
\]
Equality is attained by the standard prefix-exact Chebyshev recurrence.
Consequently this recurrence is optimal inside the causal two-term
prefix-exact class, and
\(A_N\ge \frac{4}{\sqrt3}\frac{N^{3/2}}{\Delta}\epsilon_N^\star(1-o(1))\).
\end{theorem}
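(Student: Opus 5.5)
The proof is a uniqueness argument followed by the computation of \Cref{thm:prefix-kernel}. The claim is that prefix exactness at every step forces the coefficients \((a_t,b_t,\gamma_t)\) to be those of the standard recurrence. Hence every recurrence in the class has the same kernel \(K_s\), its gain equals \(A_N^{\rm pref}\), and the lower bound holds with equality.

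\emph{Step 1 (uniqueness of coefficients).} On a scalar quadratic with curvature \(\lambda\), the residuals satisfy
\[
r_1(\lambda)=1-\gamma_0\lambda,\qquad r_{t+1}(\lambda)=(a_t-\gamma_t\lambda)r_t(\lambda)+b_t r_{t-1}(\lambda).
\]
Impose \(r_t=p_t^\star\) for all \(t\le N\) on \([\mu,L]\), so these are polynomial identities.
\begin{itemize}
\item \(t=1\): comparing \(p_1^\star(\lambda)=z(\lambda)/\eta\) gives \(\gamma_0=2/(L+\mu)\).
\item \(t\ge1\): we need \(p_{t+1}^\star=(a_t-\gamma_t\lambda)p_t^\star+b_tp_{t-1}^\star\). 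Since \(p_{t-1}^\star,p_t^\star,p_{t+1}^\star\) have exact degrees \(t-1,t,t+1\), they are linearly independent together with \(\lambda p_t^\star\).
\item Matching the leading coefficient fixes \(\gamma_t\). Then \(p_{t+1}^\star+\gamma_t\lambda p_t^\star\) has degree at most \(t\), and its expansion in \(p_t^\star,p_{t-1}^\star\) is unique. Such an expansion exists by the Chebyshev three-term recurrence \(T_{t+1}=2zT_t-T_{t-1}\), which is precisely the standard scheme.
\item The identities \(r_t(0)=1\), i.e.\ \(a_t+b_t=1\), come out automatically.
\end{itemize}
Hence \(a_t=1+\beta_t\), \(b_t=-\beta_t\), and \(\gamma_t=(1+\beta_t)\tfrac{2}{L+\mu}\) for \(t\ge1\). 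The one point needing care is that the degree-\(t\) part of the identity has only two free coefficients \((a_t,b_t)\) but is consistent. Consistency is guaranteed because the standard recurrence realizes it (\Cref{thm:prefix-exact-chebyshev-recurrence}), and uniqueness follows from linear independence.

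\emph{Step 2 (the recurrence as a realization).} Write the recurrence as a memory-2 realization with state \(Y_s=(x_s,x_{s-1})\), \(q_s=e_1\), and \(g_s=(\gamma_s,0)^\top\). The kernel \(K_s\) is defined through the factors \(C_s(\lambda)\), \(b\), \(\ell\), \(g_s\), \(q_s\), and all of these are determined by the coefficients. Since Step 1 shows the coefficients coincide with the standard ones, every admissible recurrence has exactly the kernel of \Cref{thm:prefix-kernel}, and so \(A_N=A_N^{\rm pref}\).

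I would still record directly why the supremum is the stated value, since this is what the lower bound rests on. By \Cref{thm:prefix-kernel}, \(|U_{m-1}(z)|\le m\) and \(|T_s(z)|\le1\) on \([-1,1]\), with equality at \(z(\lambda)=z(\nu)=1\), i.e.\ at \(\lambda=\nu=\mu\). Evaluating there gives \(\sum_s|K_s|^2=(\epsilon_N^\star/\Delta)^2\bigl(4N^2+16\sum_{m=1}^{N-1}m^2\bigr)\), which bounds the supremum from below. Equality is then immediate because the standard recurrence is a member of the class, and the asymptotic statement follows from \(\sum_{m<N} m^2\sim N^3/3\).

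The main obstacle is Step 1's handling of the scalar identities: they must hold for all \(\lambda\in[\mu,L]\), which is an infinite set, so they upgrade to polynomial identities. The exact-degree claim also has to be verified, namely that the leading coefficient of \(p_t^\star\) is nonzero because \(T_t(\eta)\ne0\) for \(\eta>1\).
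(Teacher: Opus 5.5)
Your proposal is correct and follows essentially the paper's proof. You force $\gamma_0$ from $r_1=p_1^\star$, then use the degree-based linear independence of $\lambda p_t^\star,\,p_t^\star,\,p_{t-1}^\star$ to pin $(a_t,b_t,\gamma_t)$ to the standard coefficients (the paper isolates this as an auxiliary uniqueness theorem and subtracts the two recurrences), and conclude $K_s=K_s^{\rm pref}$, hence $A_N=A_N^{\rm pref}$, from Theorem~\ref{thm:prefix-kernel}.

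One wording slip: the four polynomials $p_{t-1}^\star,p_t^\star,p_{t+1}^\star,\lambda p_t^\star$ are not jointly independent, since the three-term recurrence is precisely a relation among them. Only the triple $\lambda p_t^\star,p_t^\star,p_{t-1}^\star$ is independent, and that is all your argument actually uses.
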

\prooflink{thm:prefix-lower}

The next step is to replace prefix exactness by terminal exactness and
parametrize the resulting two-term recurrences.

\section{Jacobi coordinates and first-variation factorization}
\label{sec:jacobi-main}

We work in Jacobi-normalized coordinates for terminal-exact two-term recurrences:
\[
P_{-1}=0,
\;
P_0=1,
\;
P_{s+1}(z)=(z-b_s)P_s(z)-a_s^2P_{s-1}(z),
\; a_s>0.
\]
In these coordinates, final Chebyshev exactness is the terminal condition
$
        P_N(z)=2^{1-N}T_N(z),
$
so the Jacobi spectrum is fixed to
\(\zeta_j=\cos\frac{(2j-1)\pi}{2N}, \; j=1,\ldots,N\).
The free parameters are the spectral weights on these fixed nodes,
or equivalently the intermediate Jacobi coefficients.  The two structural facts
below give the coordinate representation and the time-ordered kernel in this
parametrization.

\begin{theorem}[Jacobi method]
\label{thm:jacobi-method}
Let
$
P_{-1}:=0,
\; P_0:=1,
\; a_0:=0,
$
and let
\(P_{s+1}(z)=(z-b_s)P_s(z)-a_s^2P_{s-1}(z), \; s=0,\ldots,N-1\),
with \(a_s>0\) for \(s=1,\ldots,N-1\).  Assume
$
P_s(\eta)>0,
\; s=0,\ldots,N.
$
Define
$
r_s(\lambda):=\frac{P_s(z(\lambda))}{P_s(\eta)}.
$
The associated two-term method is
\[
x_1
=
\frac1{P_1(\eta)}
\left[
(\eta-b_0)x_0-\frac{2}{\Delta}\nabla f(x_0)
\right],
\]
and, for \(s=1,\ldots,N-1\),
\[
x_{s+1}
=
\frac{P_s(\eta)}{P_{s+1}(\eta)}
\left[
(\eta-b_s)x_s-\frac{2}{\Delta}\nabla f(x_s)
\right]
-
a_s^2\frac{P_{s-1}(\eta)}{P_{s+1}(\eta)}x_{s-1}.
\]
On every quadratic with \(\mu I\preceq H\preceq LI\), its errors satisfy
$
e_s=r_s(H)e_0,
\; s=0,\ldots,N.
$
In particular, if \(P_N(z)=2^{1-N}T_N(z)\), then
\(e_N=p_N^\star(H)e_0\).
\end{theorem}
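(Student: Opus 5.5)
The plan is an induction on \(s\) using the three-term recurrence evaluated at \(z=z(H)\). Work on a quadratic \(f(x)=\frac12\langle H(x-x^\star),x-x^\star\rangle\), so \(\nabla f(x_s)=He_s\) with \(e_s:=x_s-x^\star\). The key identity is that the affine map \(\lambda\mapsto z(\lambda)\) satisfies \(z(H)=\eta I-\frac{2}{\Delta}H\). Hence, for any \(x\),
\[
(\eta-b_s)x-\tfrac{2}{\Delta}\nabla f(x)=(z(H)-b_s)(x-x^\star)+(\eta-b_s)x^\star .
\]

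Before the main computation, we check that the method fixes \(x^\star\). Suppose \(x_{s}=x_{s-1}=x^\star\). Then the right-hand side of the update equals
\[
x^\star\left[\frac{P_s(\eta)(\eta-b_s)-a_s^2P_{s-1}(\eta)}{P_{s+1}(\eta)}\right].
\]
The bracket equals \(1\) by the recurrence at \(z=\eta\). The same argument covers the first step, using \(P_1(\eta)=\eta-b_0\). Therefore subtracting \(x^\star\) from both sides of each update gives a homogeneous error recursion:
\[
e_1=\frac{(z(H)-b_0)e_0}{P_1(\eta)},\qquad
e_{s+1}=\frac{P_s(\eta)(z(H)-b_s)e_s-a_s^2P_{s-1}(\eta)e_{s-1}}{P_{s+1}(\eta)} .
\]
Positivity of \(P_s(\eta)\) guarantees that all divisions are legitimate.

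Now induct on \(s\), with hypothesis \(e_k=P_k(z(H))e_0/P_k(\eta)\) for \(k=s-1,s\). The base cases are \(e_0=e_0\), and \(e_1=P_1(z(H))e_0/P_1(\eta)\), which holds because \(P_1(z)=z-b_0\). Substituting the hypothesis into the error recursion, the factors \(P_s(\eta)\) and \(P_{s-1}(\eta)\) cancel, leaving
\[
e_{s+1}=\frac{\big[(z(H)-b_s)P_s(z(H))-a_s^2P_{s-1}(z(H))\big]e_0}{P_{s+1}(\eta)}=\frac{P_{s+1}(z(H))e_0}{P_{s+1}(\eta)}.
\]
Here \(P_k(z(H))=P_k(z)|_{z=z(H)}\) by the polynomial functional calculus, and \(r_k(H)=P_k(z(H))/P_k(\eta)\) by definition of \(r_k\).

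For the final claim, if \(P_N=2^{1-N}T_N\), then the constants \(2^{1-N}\) cancel in the ratio and \(r_N(\lambda)=T_N(z(\lambda))/T_N(\eta)=p_N^\star(\lambda)\). This requires \(P_N(\eta)>0\), which holds automatically since \(T_N(\eta)>0\) for \(\eta>1\). Hence \(e_N=p_N^\star(H)e_0\).

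There is no real obstacle here. The only care needed is the bookkeeping of the affine term \(x^\star\), that is, verifying that the normalizations make the update consistent at the minimizer; this is exactly the identity \(P_{s+1}(\eta)=(\eta-b_s)P_s(\eta)-a_s^2P_{s-1}(\eta)\). The spectral bound \(\mu I\preceq H\preceq LI\) is not actually needed for the identity itself.
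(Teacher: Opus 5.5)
Your proof is correct and follows the paper's route: the recurrence evaluated at \(z=\eta\) supplies the fixed-point identity that cancels the affine \(x^\star\) term and leaves a homogeneous error recursion. Induction then gives \(e_s=r_s(H)e_0\), with the final claim following by cancelling \(2^{1-N}\); you run the induction directly on \(P_k(z(H))\), whereas the paper writes the scalar recurrence for \(r_s(\lambda)\) and lifts it via the spectral theorem, which is the same argument.
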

\prooflink{thm:jacobi-method}

Let \(J\) be the \(N\times N\) Jacobi matrix with diagonal entries
\(b_0,\ldots,b_{N-1}\) and off-diagonal entries \(a_1,\ldots,a_{N-1}\).
For the right tail block \(J_{s+1:N-1}\), the principal submatrix on the sites
\(s+1,\ldots,N-1\), set
\(Q_{N-s-1}^{(s+1)}(z)=\det(zI-J_{s+1:N-1}), \; Q_0^{(N)}:=1\).

\begin{theorem}[Jacobi first-variation kernel]\label{thm:jacobi-first-variation-kernel}
Let \(\mathcal R_J\) be the Jacobi realization of \Cref{thm:jacobi-method}.
For \(x,y\in[-1,1]\), write
\[
K_s(x,y):=K_{s,\mathcal R_J}(\lambda(x),\lambda(y)),
\;
\lambda(u):=\frac{L+\mu}{2}-\frac{\Delta}{2}u.
\]
Then
\[
K_s(x,y)
=
-\frac{2}{\Delta P_N(\eta)}
Q_{N-s-1}^{(s+1)}(x)P_s(y),
\; s=0,\ldots,N-1.
\]
Consequently
\[
A_N(J)
=
\frac{2}{\Delta |P_N(\eta)|}
\sup_{x,y\in[-1,1]}
\left(
\sum_{s=0}^{N-1}
|Q_{N-s-1}^{(s+1)}(x)P_s(y)|^2
\right)^{1/2}.
\]
\end{theorem}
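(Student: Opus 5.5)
The plan is to write the Jacobi method of \Cref{thm:jacobi-method} as a memory-two realization in the sense of \Cref{sec:problem-setting}, read off the transfer matrices \(C_s(\lambda)\), and evaluate the two products that appear in \(K_{s,\mathcal R}\). Take the state \(Y_s=(x_s,x_{s-1})\), with \(x_{-1}:=0\) and \(b=(1,0)^\top\). The query vector is \(q_s=(1,0)^\top\) and \(\ell=(1,0)^\top\). The gradient vector is \(g_s=\bigl(\tfrac{2}{\Delta}\tfrac{P_s(\eta)}{P_{s+1}(\eta)},0\bigr)^\top\). The matrix \(P_s\) has first row \(\bigl(\tfrac{P_s(\eta)}{P_{s+1}(\eta)}(\eta-b_s),\,-a_s^2\tfrac{P_{s-1}(\eta)}{P_{s+1}(\eta)}\bigr)\) and second row \((1,0)\). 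In the variable \(u\) with \(\lambda=\lambda(u)\), we have \(\eta-\tfrac{2\lambda}{\Delta}=u\). Hence the first row of \(C_s(\lambda(u))\) is the three-term step
\[
P_{s+1}(u)/P_{s+1}(\eta)=\tfrac{P_s(\eta)}{P_{s+1}(\eta)}(u-b_s)\,r_s-\tfrac{a_s^2P_{s-1}(\eta)}{P_{s+1}(\eta)}\,r_{s-1}.
\]

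\textbf{Right factor.} By \Cref{thm:jacobi-method}, or directly by induction, \(C_{s-1}(\nu)\cdots C_0(\nu)b=(r_s(\nu),r_{s-1}(\nu))^\top\) with \(\nu=\lambda(y)\). Multiplying by \(q_s^\top\) gives \(r_s(\nu)=P_s(y)/P_s(\eta)\). Multiplying by \(g_s\) then produces the scalar \(\tfrac{2}{\Delta}\tfrac{P_s(y)}{P_{s+1}(\eta)}\) placed in the first coordinate.

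\textbf{Left factor.} This is the main obstacle. I need \(\ell^\top C_{N-1}(\lambda)\cdots C_{s+1}(\lambda)e_1\) with \(\lambda=\lambda(x)\). I will view it as the output at time \(N\) of the same recurrence, started at time \(s+1\) from the state \((1,0)\): that is, \(\tilde x_{s+1}=1\) and \(\tilde x_s=0\). Set \(w_k:=\tilde x_k\prod_{j=s+1}^{k-1}\tfrac{P_{j+1}(\eta)}{P_j(\eta)}\), which telescopes to \(w_k=\tilde x_k\,P_k(\eta)/P_{s+1}(\eta)\). After this rescaling the \(w_k\) satisfy the unnormalized recurrence
\[
w_{k+1}=(x-b_k)w_k-a_k^2w_{k-1},\qquad w_{s+1}=1,\quad w_s=0.
\]
This is exactly the cofactor recurrence for \(\det(xI-J_{s+1:k})\), obtained by expanding along the last row. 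Therefore \(w_N=Q^{(s+1)}_{N-s-1}(x)\), and the left factor equals \(\tfrac{P_{s+1}(\eta)}{P_N(\eta)}Q^{(s+1)}_{N-s-1}(x)\). The delicate bookkeeping is to check that the normalization ratios telescope to exactly \(P_{s+1}(\eta)/P_N(\eta)\), and that the \(a_k^2\) coefficient transforms correctly under the rescaling. The edge case \(s=N-1\) gives the empty product and \(Q_0^{(N)}=1\).

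\textbf{Assembly.} Multiplying the pieces with the overall minus sign, the factor \(P_{s+1}(\eta)\) cancels and we obtain
\[
K_s(x,y)=-\frac{2}{\Delta P_N(\eta)}\,Q^{(s+1)}_{N-s-1}(x)\,P_s(y).
\]
The case \(s=0\) is covered by the convention \(a_0=0\), since then \(x_1\) matches the stated initialization. The formula for \(A_N(J)\) follows by substituting into the definition of \(A_N\). The sup over \(\lambda,\nu\in[\mu,L]\) becomes the sup over \(x,y\in[-1,1]\) through the bijection \(u\mapsto\lambda(u)\), and the positive constant \(\tfrac{2}{\Delta|P_N(\eta)|}\) factors out of the square root.
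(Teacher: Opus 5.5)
Your proposal is correct and follows essentially the same route as the paper: the kernel is the product of the prefix factor $P_s(y)/P_s(\eta)$, the insertion coefficient $-\tfrac{2}{\Delta}\tfrac{P_s(\eta)}{P_{s+1}(\eta)}$, and the tail transfer $\tfrac{P_{s+1}(\eta)}{P_N(\eta)}Q^{(s+1)}_{N-s-1}(x)$, where the tail is obtained by rescaling the normalized recurrence into the tail-determinant recurrence. The differences are only presentational: you write out the memory-two realization $(b,\ell,P_s,g_s,q_s)$ and read the kernel off its definition, rescaling forward from the tail data, whereas the paper reasons with the first-order contribution to $e_{s+1}$ and checks a candidate tail solution against the recurrence.
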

\prooflink{thm:jacobi-first-variation-kernel}

\section{The sine--Jacobi chain and its upper bound}
\label{sec:sine-upper-main}

We pick a canonical point in the final-Chebyshev fibre by imposing reversal symmetry on the endpoint Green profile; the finite sine-lattice identities are proved in Appendix~\ref{app:proofs-sec5}.  Let
\[
P_{s+1}(z)=(z-b_s)P_s(z)-a_s^2P_{s-1}(z),
\;
P_{-1}=0,\quad P_0=1,\quad a_s>0,
\]
be the Jacobi recurrence associated with a nondegenerate two-term realization.
Final Chebyshev exactness imposes
\(P_N(z)=2^{1-N}T_N(z)\).
Hence the spectrum of the \(N\times N\) Jacobi matrix is fixed:
\[
\zeta_j=\cos\theta_j,
\;
\theta_j=\frac{(2j-1)\pi}{2N},
\;
j=1,\ldots,N.
\]
The remaining freedom in the final-Chebyshev class is the choice of spectral
weights
\[
\rho=\sum_{j=1}^N w_j\delta_{\zeta_j},
\;
w_j>0,
\;
\sum_{j=1}^N w_j=1.
\]

For a Jacobi realization, the kernel is
\(K_s(x,y) = -\frac{2}{\Delta P_N(\eta)} Q^{(s+1)}_{N-s-1}(x)P_s(y)\),
where \(Q^{(s+1)}_{N-s-1}\) is the characteristic polynomial of the right tail.
At the endpoint \(x=y=1\),
\[
\frac{P_s(1)Q^{(s+1)}_{N-s-1}(1)}{P_N(1)}
=
\bigl[(I-J)^{-1}\bigr]_{ss}
=:d_s.
\]
Thus the endpoint contribution is controlled by the positive Green profile \(d=(d_0,\ldots,d_{N-1})\).  Its total mass is spectral:
\[
\sum_{s=0}^{N-1}d_s
=
\operatorname{tr}(I-J)^{-1}
=
\sum_{j=1}^N\frac{1}{1-\zeta_j}.
\]
The spectral weights determine how this fixed Green mass is distributed along
the chain, while the drift estimate contains the squared mass
\(\sum_{s=0}^{N-1}d_s^2\).

The continuum endpoint profile used below is defined here.  Let \(f\) be the positive non-logarithmic solution of
\[
        f''(t)+\frac{\pi^2}{4\sin^2(\pi t)}f(t)=0,
        \;
        f(t)\sim\sqrt t\quad(t\downarrow0),
\]
and define
\[
        I_{\sin}:=\int_0^1 f(u)f(1-u)\,du,
        \;
        \varphi_{\sin}(t):=\frac{f(t)f(1-t)}{I_{\sin}},
        \;
        c_{\sin}:=\int_0^1\varphi_{\sin}(t)^2dt.
\]
The normalization of \(f\) cancels in \(\varphi_{\sin}\).  Numerically,
\(c_{\sin}\approx1.1426922\) and \(2\sqrt{c_{\sin}}\approx2.137936\).

We use the following selection rule: choose the canonical representative in
the Chebyshev isospectral fibre whose endpoint Green profile is invariant
under reversal,
\(d_s=d_{N-1-s}\).
At the Jacobi-matrix level this is enforced by persymmetry,
\(RJR=J, \; Re_s=e_{N-1-s}\).
For a Jacobi matrix with prescribed simple eigenvalues
\(\zeta_1,\ldots,\zeta_N\), persymmetry is equivalent to the inverse-spectral
weights
\(w_j=\frac{c}{|P_N'(\zeta_j)|}\),
where \(c\) is the normalizing constant.

For \(P_N(z)=2^{1-N}T_N(z)\), we have
\(P_N'(\zeta_j) = 2^{1-N}N\,U_{N-1}(\zeta_j)\).
Since \(\zeta_j=\cos\theta_j\) and
\[
U_{N-1}(\cos\theta_j)
=
\frac{\sin(N\theta_j)}{\sin\theta_j},
\;
|\sin(N\theta_j)|=1,
\]
the persymmetric weights are
\(w_j = \frac{\sin\theta_j}{\sum_{k=1}^N\sin\theta_k}\).
These are the sine weights.

Let \(J_N^{\sin}\) denote the Jacobi matrix associated with these weights.  It
has zero diagonal and explicit off-diagonal coefficients
\[
a_{s,N}^2
=
\frac14
\frac{\sin^2(\pi s/N)}
{\sin\bigl(\pi(s-\tfrac12)/N\bigr)
 \sin\bigl(\pi(s+\tfrac12)/N\bigr)},
\;
s=1,\ldots,N-1.
\]
The resulting two-term method is the sine--Jacobi Chebyshev realization. It is
final-Chebyshev exact by construction. Moreover, for this chain,
\[
A_N(J_N^{\sin})
=
\frac{2\epsilon_N^\star}{\Delta}
\left(\sum_{s=0}^{N-1}d_s^2\right)^{1/2}
=
2\sqrt{c_{\sin}}\,
\frac{N^{3/2}}{\Delta}\epsilon_N^\star(1+o(1)).
\]
The method uses the positive endpoint values
$
        p_0=1,
        \;
        p_1=\eta,
        \;
        p_{s+1}=\eta p_s-a_{s,N}^2p_{s-1}.
$
These are positive because they are leading principal characteristic polynomials evaluated at \(\eta>1\), with zeros in \([-1,1]\) by interlacing.  Initialize by
$
        x_1=x_0-\frac{2}{L+\mu}\nabla f(x_0),
$
and, for \(s=1,\ldots,N-1\), updated by
\[
        x_{s+1}
        =\frac{\eta p_s}{p_{s+1}}x_s
        -\frac{a_{s,N}^2p_{s-1}}{p_{s+1}}x_{s-1}
        -\frac{2p_s}{\Delta p_{s+1}}\nabla f(x_s).
\]

\noindent
\begin{minipage}[t]{0.49\textwidth}
\vspace{0pt}
\captionof{algorithm}{Prefix-exact Chebyshev recurrence}
\label{alg:prefix-chebyshev}
\begin{algorithmic}[1]
\Require initial point \(x_0\), gradient oracle \(\nabla f\), parameters \(0<\mu<L\), horizon \(N\ge1\)
\State \(\Delta\gets L-\mu\), \(\eta\gets (L+\mu)/\Delta\)
\State \(c_0\gets 1\), \(c_1\gets \eta\)
\For{\(t=1,\ldots,N\)}
    \State \(c_{t+1}\gets 2\eta c_t-c_{t-1}\)
\EndFor
\State \(x_1\gets x_0-\dfrac{2}{L+\mu}\nabla f(x_0)\)
\For{\(t=1,\ldots,N-1\)}
    \State \(\beta_t\gets c_{t-1}/c_{t+1}\)
    \State \(x_{t+1}\gets x_t+\beta_t(x_t-x_{t-1})\)
    \Statex \hspace{\algorithmicindent}
    \(\displaystyle -(1+\beta_t)\frac{2}{L+\mu}\nabla f(x_t)\)
\EndFor
\State \Return \(x_N\)
\end{algorithmic}
\end{minipage}
\hfill
\begin{minipage}[t]{0.49\textwidth}
\vspace{0pt}
\captionof{algorithm}{Sine-Jacobi Chebyshev method}
\label{alg:sine-jacobi}
\begin{algorithmic}[1]
\Require initial point \(x_0\), gradient oracle \(\nabla f\), parameters \(0<\mu<L\), horizon \(N\ge1\)
\State \(\Delta\gets L-\mu\), \(\eta\gets (L+\mu)/\Delta\)
\State \(p_0\gets 1\), \(p_1\gets \eta\)
\For{\(s=1,\ldots,N-1\)}
    \State \(\displaystyle
    a_s^2\gets \frac14
    \frac{\sin^2(\pi s/N)}
    {\sin(\pi(s-\frac12)/N)\sin(\pi(s+\frac12)/N)}
    \)
    \State \(p_{s+1}\gets \eta p_s-a_s^2p_{s-1}\)
\EndFor
\State \(x_1\gets x_0-\dfrac{2}{L+\mu}\nabla f(x_0)\)
\For{\(s=1,\ldots,N-1\)}
    \State \(\displaystyle
    x_{s+1}\gets
    \frac{\eta p_s}{p_{s+1}}x_s
    -\frac{a_s^2p_{s-1}}{p_{s+1}}x_{s-1}
    \)
    \Statex \hspace{\algorithmicindent}
    \(\displaystyle
    -\frac{2p_s}{\Delta p_{s+1}}\nabla f(x_s)
    \)
\EndFor
\State \Return \(x_N\)
\end{algorithmic}
\end{minipage}

The endpoint-profile theorem is the following.

\begin{theorem}[Sine-Jacobi endpoint profile]
\label{thm:sine-endpoint-profile}
For \(D_N(t):=d_{\lfloor tN\rfloor}/N\), \(0\le t<1\),
$
        D_N\to\varphi_{\sin}
        \;\text{in }L^2(0,1),
$
$
        \frac1{N^3}\sum_{s=0}^{N-1}d_s^2\to c_{\sin}.
$
\end{theorem}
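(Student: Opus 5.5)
The approach is to reduce $d_s$ to a product of a forward solution and its reflection at $z=1$, recognize the rescaled recurrence as a discretization of the ODE defining $f$, and fix the normalization with the trace identity.

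\emph{Step 1 (persymmetric factorization).} For the persymmetric matrix $J_N^{\sin}$, the right tail $J_{s+1:N-1}$ is unitarily equivalent (via $R$) to the leading block $J_{0:N-s-2}$. Hence $Q^{(s+1)}_{N-s-1}=P_{N-s-1}$, and
\[
d_s=\frac{P_s(1)P_{N-1-s}(1)}{P_N(1)} .
\]
Since the diagonal vanishes, $P_N(1)=2^{1-N}T_N(1)=2^{1-N}$. Set $g_s:=2^sP_s(1)$, so that $d_s=g_sg_{N-1-s}$. Using $\sin(x-h)\sin(x+h)=\sin^2x-\sin^2h$, we get $4a_{s,N}^2=1+\varepsilon_s$ with
\[
\varepsilon_s=\frac{\sin^2(\pi/2N)}{\sin^2(\pi s/N)-\sin^2(\pi/2N)} .
\]
The recurrence then reads
\[
g_{s+1}-2g_s+g_{s-1}=-\varepsilon_s g_{s-1},\qquad g_0=1,\ g_1=2 .
\]
Positivity of $g_s$ follows from interlacing, as stated in the text. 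The total mass is fixed by the trace identity:
\[
\sum_s d_s=\sum_j\frac{1}{1-\cos\theta_j}=N^2,
\]
using the classical identity for the Chebyshev nodes.

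\emph{Step 2 (continuum limit).} With $t=s/N$ we have $\varepsilon_s=\frac{\pi^2}{4N^2\sin^2(\pi t)}(1+O(N^{-2}))$ uniformly on $[\delta,1-\delta]$. So the recurrence is a consistent second-order discretization of $f''+\frac{\pi^2}{4\sin^2\pi t}f=0$. I would show that $g_{\lfloor tN\rfloor}/(C_N\sqrt N)\to f(t)$ uniformly on $[\delta,1-\delta]$ for a suitable scalar $C_N$.

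The key step is the singular endpoint. Near $s=0$ one has $\varepsilon_s\approx 1/(4s^2)$, the discrete Euler equation with double indicial root $1/2$, whose solutions behave like $\sqrt s$ and $\sqrt s\log s$. I would do a discrete Frobenius/variation-of-constants analysis on $1\le s\le \delta N$. Comparing with the explicit discrete solutions of $\Delta^2 y=-\frac{1}{4s^2}y$, I would treat the remainder $\varepsilon_s-\frac1{4s^2}=O(N^{-2})$ as a summable perturbation via a discrete Gronwall bound. This should show that the initial data $(1,2)$ select, up to $o(1)$ relative error, the non-logarithmic branch $g_s\sim C\sqrt s$ with no $\sqrt s\log s$ component. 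This is the step I expect to be the main obstacle. Excluding the log component needs the exact Wronskian computed from $g_0,g_1$, and the error accumulated over $\sim\delta N$ steps must be controlled. I would first try the exact Wronskian identity $W_{s}=W_{s-1}(1+\varepsilon_{s})$ together with a Liouville-type change of variables. Matching at $t=\delta$ then transfers the asymptotics to the interior by standard convergence of the discretization (one-step error $O(N^{-3})$, stable over $N$ steps).

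\emph{Step 3 (envelope and conclusion).} The same endpoint analysis gives a uniform bound $g_s\le C'C_N\sqrt{s+1}$ for $s\le N/2$. By reflection, $D_N(t)\le C''\sqrt{t(1-t)}\cdot C_N^2$ after normalization. Define
\[
\tilde D_N(t):=\frac{g_{\lfloor tN\rfloor}\,g_{N-1-\lfloor tN\rfloor}}{C_N^2N} .
\]
Then $\tilde D_N\to f(t)f(1-t)$ pointwise on $(0,1)$, with a square-integrable envelope. Dominated convergence gives convergence in $L^1$ and $L^2$. Since $\int D_N=N^{-2}\sum_s d_s=1$, the ratio $D_N/\tilde D_N$ is the constant $C_N^2/N$ (up to the floor convention). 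Its limit is forced to be $1/I_{\sin}$, so $D_N\to\varphi_{\sin}$ in $L^2$. Finally,
\[
\frac1{N^3}\sum_s d_s^2=\int_0^1 D_N^2\,dt\to\int_0^1\varphi_{\sin}^2=c_{\sin}.
\]
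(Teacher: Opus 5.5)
Your outline follows the paper's proof step for step: the persymmetric factorization $d_s=g_sg_{N-1-s}$, the endpoint recurrence with data $(1,2)$, the continuum limit, the trace identity $\sum_sd_s=N^2$ forcing $C_N^2\to I_{\sin}^{-1}$, and dominated convergence. The one difference is that the paper imports the asymptotic input (branch selection and envelope) from Wang--Wong, while you try to prove it yourself, and that is where your two gaps are.

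\emph{First gap: branch selection.} Your estimate $\varepsilon_s-\frac1{4s^2}=O(N^{-2})$ is false. At fixed $s$ one has $\varepsilon_s\to\frac1{4s^2-1}$ (e.g.\ $\varepsilon_1\to\frac13$), so the remainder is $\frac1{4s^2(4s^2-1)}+O(N^{-2})$, which is of order one at $s=1$. It is summable but not small. A Gronwall comparison with $\Delta^2y=-y/(4s^2)$ therefore only gives $g_s\approx\alpha\sqrt s+\beta\sqrt s\log s$ with unknown $\beta$. Neither Gronwall nor the Wronskian recursion $W_s=(1+\varepsilon_s)W_{s-1}$ decides whether $\beta=0$.

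The missing observation is that the $N=\infty$ recurrence $y_{s+1}=2y_s-\frac{4s^2}{4s^2-1}y_{s-1}$ is the monic Legendre recurrence at $x=1$ (the sine weights discretize $dx$ on $[-1,1]$). Its solution with data $(1,2)$ is explicitly $y_s=4^s/\binom{2s}{s}\sim\sqrt{\pi s}$, with no logarithm. Compare $g$ with $y$ instead. The difference $g-y$ has zero initial data and is forced by $(\varepsilon_s-\frac1{4s^2-1})g_{s-1}$, where $\varepsilon_s-\frac1{4s^2-1}=O(N^{-2})$ uniformly for $s\le N/2$. The Legendre Green's function satisfies $\lesssim\sqrt{sk}\,(1+\log(s/k))$ for $k\le s$. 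This gives $g_s=y_s\bigl(1+O(s^2/N^2)\bigr)$, which is exactly the $O(\delta^2)$ matching error you need at $t=\delta$, and incidentally shows $C_N\to\sqrt\pi$.

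\emph{Second gap: the envelope.} ``By reflection'' does not work, because only $d$ is reflection-symmetric, not $g$. Bounding $d_s=g_sg_{N-1-s}$ for $s\le N/2$ requires control of $g_k$ for $k\ge N/2$. That range is near the other singular endpoint, where $f$ is on its logarithmic branch, $f(1-u)\asymp\sqrt u\log(1/u)$. Concretely, $g_1=2$ while $g_{N-1}=d_0=\sum_jw_j/(1-\zeta_j)\asymp\log N$. Your left-endpoint analysis says nothing about this region.

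A simple fix uses a comparison with the same Legendre solution:
\begin{itemize}
\item Since $\sin x/x$ decreases on $(0,\pi)$, one has $\varepsilon_s\ge\frac1{4s^2-1}$ for $1\le s\le N-1$.
\item The ratio $r_s=g_s/g_{s-1}$ satisfies $r_{s+1}=2-(1+\varepsilon_s)/r_s$, which is increasing in $r_s>0$ and decreasing in $\varepsilon_s$.
\item Both sequences start from $r_1=2$, so induction gives $g_s\le 4^s/\binom{2s}{s}\le 2\sqrt s$ for all $1\le s\le N$.
\end{itemize}
Hence $d_s\le 2N$, so $D_N\le 2$ uniformly, and bounded convergence on $(0,1)$ replaces the envelope. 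The same bound also supplies $g_{k-1}\lesssim\sqrt k$ in the forcing term of the first fix.

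Finally, a slip: $D_N/\tilde D_N=C_N^2$, not $C_N^2/N$.
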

\prooflink{thm:sine-endpoint-profile}

\begin{theorem}[Sine-Jacobi gain]
\label{thm:sine-upper}
The sine-Jacobi Chebyshev method is final-Chebyshev exact and satisfies the
finite identity
\[
        A_N(J_N^{\sin})
        =
        \frac{2\epsilon_N^\star}{\Delta}
        \left(\sum_{s=0}^{N-1}d_s^2\right)^{1/2}.
\]
Consequently,
\[
        A_N(J_N^{\sin})
        =
        2\sqrt{c_{\sin}}
        \frac{N^{3/2}}{\Delta}
        \epsilon_N^\star(1+o(1)).
\]
\end{theorem}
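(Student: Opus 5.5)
The plan is to obtain final-Chebyshev exactness and the finite identity from the earlier structural results, and then read off the asymptotics from \Cref{thm:sine-endpoint-profile}.

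\textbf{Exactness.} By construction, the sine weights produce a Jacobi matrix $J_N^{\sin}$ with spectrum $\{\zeta_j\}$, which gives $P_N=2^{1-N}T_N$. The positivity $P_s(\eta)>0$ holds because each $P_s$ is a leading principal characteristic polynomial. By interlacing its zeros lie in $[\zeta_N,\zeta_1]\subset(-1,1)$, and $\eta>1$. With zero diagonal, the recursion of \Cref{thm:jacobi-method} reduces exactly to the displayed sine--Jacobi updates, with $p_s=P_s(\eta)$. Hence \Cref{thm:jacobi-method} gives $e_N=p_N^\star(H)e_0$.

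\textbf{Reduction of the finite identity.} I would start from \Cref{thm:jacobi-first-variation-kernel}:
\[
A_N(J)=\frac{2}{\Delta P_N(\eta)}\sup_{x,y\in[-1,1]}\Bigl(\sum_{s}|Q^{(s+1)}_{N-s-1}(x)P_s(y)|^2\Bigr)^{1/2}.
\]
Persymmetry $RJR=J$ gives $Q^{(s+1)}_{N-s-1}=P_{N-s-1}$. Zero diagonal gives the parity $P_s(-y)=(-1)^sP_s(y)$. The whole identity then reduces to a single claim:
\[
|P_s(y)|\le P_s(1)\qquad (y\in[-1,1],\ 0\le s\le N).
\]
Given this claim, the supremum is attained at $x=y=1$. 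There each summand equals $P_N(1)^2d_s^2$ by the endpoint Green identity $P_s(1)Q^{(s+1)}_{N-s-1}(1)/P_N(1)=d_s$. Since $P_N(1)=2^{1-N}T_N(1)=2^{1-N}$ and $P_N(\eta)=2^{1-N}T_N(\eta)$, the prefactor becomes $2P_N(1)/(\Delta P_N(\eta))=2\epsilon_N^\star/\Delta$. This is exactly the finite identity. The asymptotic statement then follows by taking square roots in $N^{-3}\sum_s d_s^2\to c_{\sin}$ from \Cref{thm:sine-endpoint-profile}.

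\textbf{Main obstacle: the endpoint maximum principle.} For general monic polynomials with zeros in $[-1,1]$ the claim $|P_s(y)|\le P_s(1)$ is false, so it must come from the specific coefficients. Here $a_{s,N}^2\ge\frac14$, since $\sin^2u\ge\sin(u-h)\sin(u+h)$. I would try an energy (Sonin-type) functional for $p_{s+1}=yp_s-a_s^2p_{s-1}$. Write $y=\cos\theta$ and set
\[
E_s(y)=P_s(y)^2-\frac{y}{a_s^{2}}\,P_s(y)P_{s-1}(y)\cdot a_s^2+a_s^2P_{s-1}(y)^2,
\]
suitably normalized by $\prod a_k^2$. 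I would then show two things:
\begin{itemize}
\item $E_s$ is monotone in $s$ whenever the normalized coefficients $a_s^2$ vary monotonically on each half of the chain. They increase toward the ends by the explicit sine formula, and persymmetry lets each half be handled separately.
\item $P_s(y)^2\le c\,E_s(y)$ uniformly on $[-1,1]$, while at $y=1$ the positive ratios $P_s(1)/P_{s-1}(1)$ dominate.
\end{itemize}
If this fails, the fallback is to compare directly with the Chebyshev case. I would write $P_s(\cos\theta)$ through a Prüfer angle recursion and show that the amplitude is maximized at $\theta=0$, using $a_s^2\ge\frac14$ together with the explicit product formula for $P_s(1)$ obtained from the sine-lattice identities of Appendix~\ref{app:proofs-sec5}.
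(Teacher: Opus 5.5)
Your reduction is the same as the paper's proof. It covers exactness from \Cref{thm:jacobi-method} with $b_s=0$ and $P_s(\eta)>0$ by interlacing, $Q^{(s+1)}_{N-s-1}=P_{N-s-1}$ by persymmetry, and the supremum at $x=y=1$. It also covers the cofactor identity $P_s(1)Q^{(s+1)}_{N-s-1}(1)=P_N(1)d_s$, the prefactor $P_N(1)/P_N(\eta)=\epsilon_N^\star$, and the asymptotics from \Cref{thm:sine-endpoint-profile}.

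The gap is exactly the step you isolate: the sharp endpoint maximum principle $|P_s(y)|\le P_s(1)$ on $[-1,1]$ for all $0\le s\le N-1$. The energy route you propose cannot produce it, for the following reasons.
\begin{enumerate}
\item Your functional simplifies to $E_s=P_s^2-yP_sP_{s-1}+a_s^2P_{s-1}^2$, and one checks $E_{s+1}=a_s^2E_s+(a_{s+1}^2-a_s^2)P_s^2$. So the normalized energy is nonincreasing only where $a_s^2$ decreases, i.e.\ on the first half of the chain.
\item Persymmetry does not move $P_s$ with $s>N/2$ into that half, since the leading $s$-block contains the midpoint. Yet the bound is needed for every degree up to $N-1$, because both $P_s$ and $P_{N-s-1}$ range over all of them.
\item More decisively, the comparison $P_s^2\le cE_s$ from the quadratic form forces $c=4a_s^2/(4a_s^2-y^2)$. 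In the bulk, $4a_{s,N}^2-1\sim\pi^2/(4N^2\sin^2(\pi s/N))$ by \Cref{thm:sine-lattice-package}(iii), so $c$ is of order $N^2$ near $y=\pm1$. This yields only an envelope bound that loses a power of $N$. The finite identity needs the inequality with constant exactly $1$, and even the asymptotic claim needs $1+o(1)$.
\item The Pr\"ufer fallback is not specific enough to evaluate, and its hypothesis $a_s^2\ge\frac14$ is not the operative one. Already $P_2(y)=y^2-a_1^2$ satisfies $|P_2(0)|\le P_2(1)$ only when $a_1^2\le\frac12$, so two-sided control of the coefficients is essential.
\end{enumerate}

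The paper obtains this step from an exact positivity identity rather than an estimate. It identifies the sine measure with the root-of-unity Rogers ($q$-ultraspherical) family with $h=\pi/N$, $q_0=\mathrm{e}^{2ih}$, $\beta=q_0^{1/2}$ (\Cref{lem:root-unity-rogers}). In the Rogers expansion
$C_s(\cos\theta;\beta\mid q_0)=\sum_k\frac{(\beta;q_0)_k(\beta;q_0)_{s-k}}{(q_0;q_0)_k(q_0;q_0)_{s-k}}\mathrm{e}^{i(s-2k)\theta}$,
each coefficient is a phase $\mathrm{e}^{-ish/2}$ independent of $k$ times a product of ratios $\sin((r+\frac12)h)/\sin((r+1)h)>0$. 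Positivity holds because $s<N$ keeps $(r+1)h<\pi$. Hence $P_s(\cos\theta)/P_s(1)=\sum_k c_{s,k}\cos((s-2k)\theta)$ with $c_{s,k}\ge0$ and $\sum_k c_{s,k}=1$. This gives $|P_s|\le P_s(1)$ on $[-1,1]$ for all $s\le N-1$ at once; it is \Cref{thm:sine-lattice-package}(iv), which the paper's proof simply invokes. If you substitute that input for your energy step, the rest of your argument goes through as written.
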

\prooflink{thm:sine-upper}

\section{Performance experiments}
\label{sec:experiments}

This section compares the prefix-exact and sine--Jacobi realizations while keeping the terminal Chebyshev transfer \(p_N^\star(\lambda)=T_N(z(\lambda))/T_N(\eta)\), \(z(\lambda)=(L+\mu-2\lambda)/(L-\mu)\), fixed.  The measured quantities are therefore invisible to the fixed-Hessian residual alone: time-ordered response, stochastic curvature overhead, finite-radius degradation, and restart cost.  All runs use Algorithms~\ref{alg:prefix-chebyshev} and~\ref{alg:sine-jacobi}.

The tests are: finite constants and stochastic frontiers on diagonal quadratics with one endpoint-coupled stochastic Hessian channel; time-varying two-dimensional quadratics for kernel validation; finite-radius/restart tests on an endpoint-coupled logistic regression GLM; and, in the appendix, a smooth log-cosh nonlinear-transfer family.

\subsection{Finite constants and stochastic block admissibility}
\label{subsec:experiments-frontier}

First we convert the asymptotic constants into finite-horizon quantities:
\[
        C_N^{\rm pref}
        :=\frac{\Delta A_N^{\rm pref}}{N^{3/2}\epsilon_N^\star},
        \;
        C_N^{\rm sine}
        :=\frac{2}{N^{3/2}}
        \bigg(\sum_{s=0}^{N-1}\bigl[(I-J_N^{\sin})^{-1}\bigr]_{ss}^2\bigg)^{1/2},
\]
where \(\Delta=L-\mu\).  The prefix expression uses Theorem~\ref{thm:prefix-kernel}; the sine expression uses the finite Jacobi matrix of Section~\ref{sec:sine-upper-main}.  Figure~\ref{fig:exp-theory-constants} uses \((\mu,L)=(1,120)\).  The finite-\(N\) squared ratio is already close to its limit: at \(N=44\),
\[
        \left(C_{44}^{\rm sine}/C_{44}^{\rm pref}\right)^2=0.8711,
\]
and at \(N=96\) it is \(0.8636\).  Equivalently, at the same first-order stochastic overhead, the sine--Jacobi realization requires about \(12.9\%\) less curvature-noise budget at \(N=44\) and about \(13.6\%\) less at \(N=96\).

\begin{figure}[ht!]
    \centering
    \includegraphics[width=0.75\linewidth]{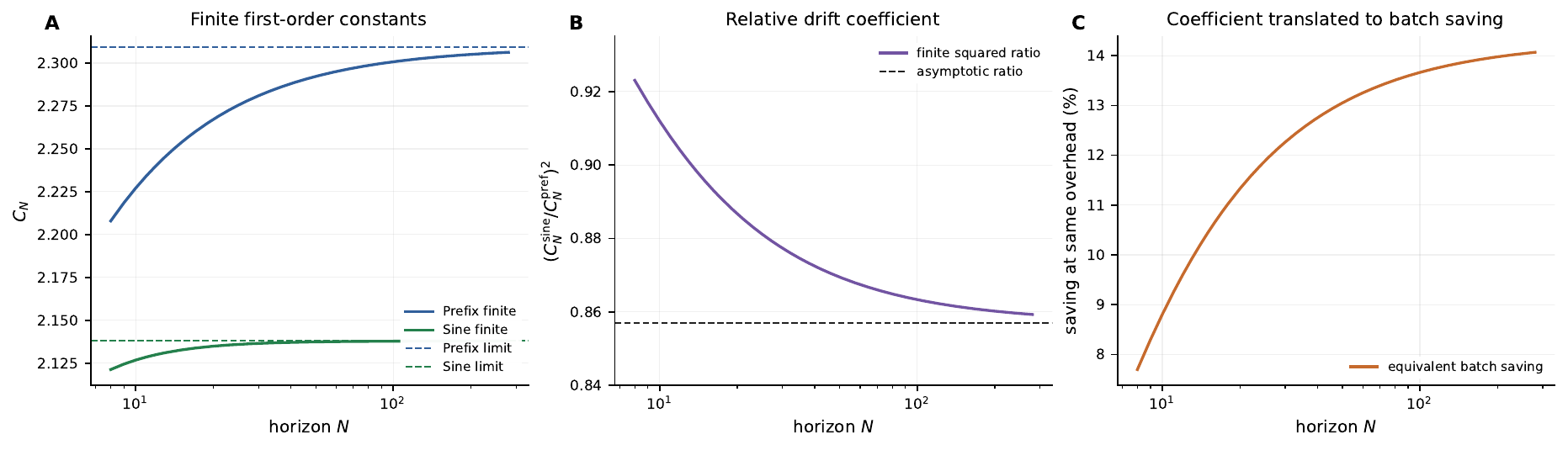}
    \caption{Finite first-order drift constants.  Panel A compares the finite constants \(C_N^{\rm pref}\) and \(C_N^{\rm sine}\) with their limits \(4/\sqrt3\) and \(2\sqrt{c_{\sin}}\).  Panel B shows the finite squared coefficient ratio.  Panel C rewrites the same coefficient ratio as the equivalent reduction in batch/noise budget at a fixed first-order overhead.}
    \label{fig:exp-theory-constants}
\end{figure}

We next turn the same coefficient comparison into block selection for
\(f_H(x)=\frac12 x^\top Hx,\; H=\operatorname{diag}(1,\sqrt{120},120)\).
The minimizer is fixed; all degradation comes from multiplicative Hessian-vector perturbations.  The diagonal Hessian has eigenvalues \(1,\sqrt{120},120\), and the endpoint-coupled oracle is
\[
        g_t(x)=Hx+\frac{\sigma}{\sqrt b}z_t Bx,
        \;
        B=e_1e_3^\top+e_3e_1^\top,
        \; z_t\sim N(0,1),
\]
with \(\sigma=2\).  The initial vector is proportional to \((1,0.25,1)\), so all three eigendirections are present while the noise couples only the endpoints.  For each \(b\), \(N\in\{16,18,\ldots,100\}\), and realization \(R\), we estimate
\[
        \mathrm{oh}_R(N,b)
        :=\mathbb E\frac{\|x_N^R\|^2}{\|x_N^R\|^2_{\delta=0}}-1
\]
from \(900\) common-random-number trials, paired between prefix and sine at each \((N,b)\).  The admissibility rule is
\(\mathrm{oh}_R(N,b)\le 0.25\).
Among admissible horizons, the selected horizon maximizes the empirical per-gradient log decrease.

\begin{figure}[ht!]
    \centering
    \includegraphics[width=0.75\linewidth]{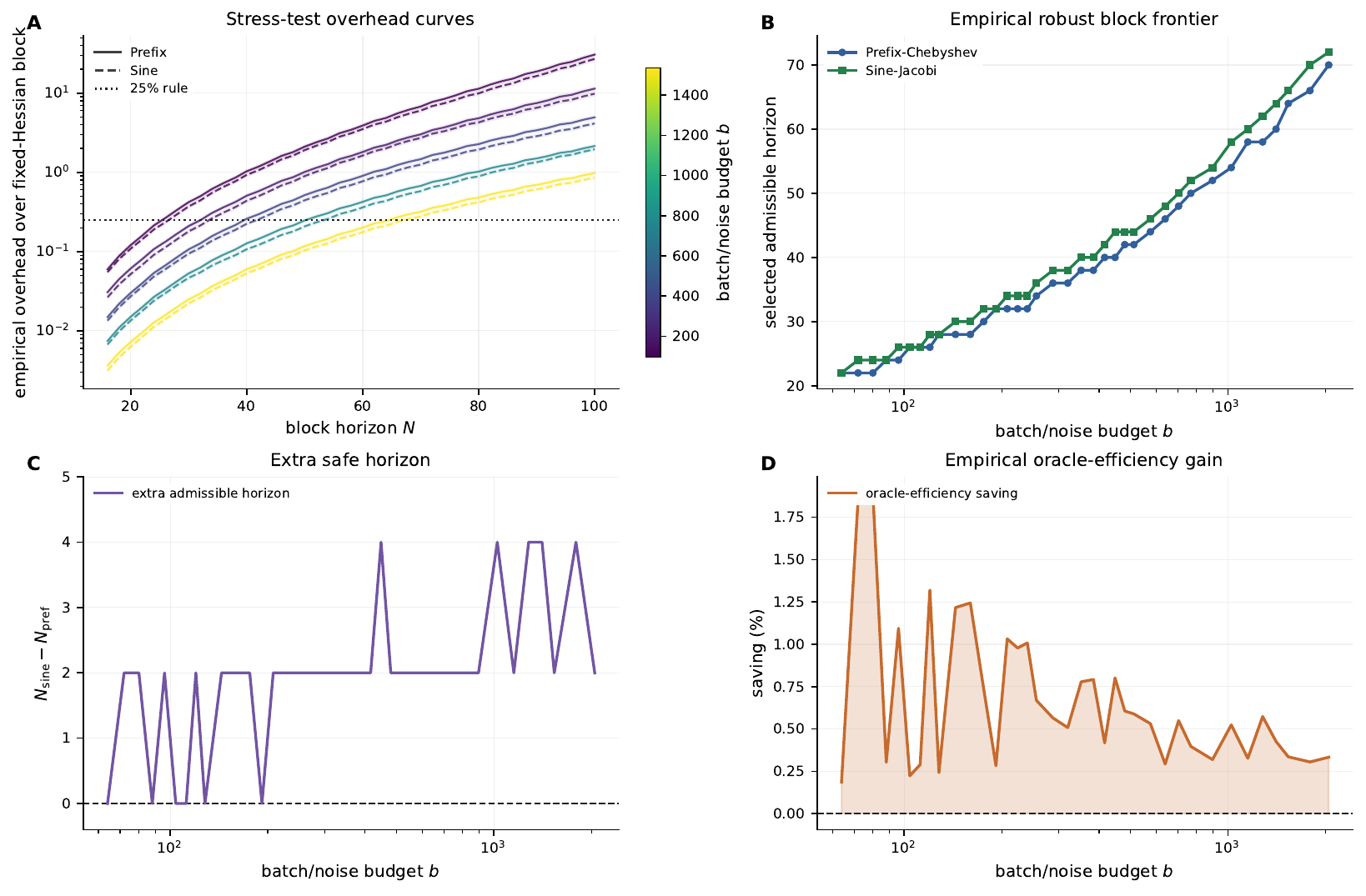}
    \caption{Empirical stochastic block frontier for the endpoint-coupled multiplicative curvature oracle.  Panel A reports the measured overhead curves \(\mathrm{oh}_R(N,b)\) for several batch/noise budgets.  Panel B gives the empirically selected admissible horizon under the \(25\%\) overhead rule.  Panel C shows the extra admissible horizon obtained by the sine--Jacobi realization.  Panel D reports the corresponding oracle-efficiency gain on the selected frontier.}
    \label{fig:exp-optimization-frontier}
\end{figure}

The empirical frontier follows the finite-constant calculation.  Across the tested batch grid, the median extra admissible horizon is \(2\), and the maximum extra admissible horizon is \(4\).  The per-gradient efficiency improvement is smaller than the coefficient reduction because the chosen horizon is integer-valued and because the two methods have the same fixed-Hessian Chebyshev residual once a horizon is fixed.  The gain therefore appears mainly as the ability to use the next admissible Chebyshev block before violating the curvature-noise overhead constraint.

\subsection{Kernel-level validation on time-varying quadratics}
\label{subsec:experiments-tvq}

To test the kernel directly, use the non-autonomous quadratic sequence
\(f_s(x)=\frac12 x^\top H_s x\),
where the Hessian changes at the same times at which the gradient oracle is queried.  The model is two-dimensional,
\[
        H_s=\begin{pmatrix}\lambda & a_s\\ a_s & \nu\end{pmatrix},
        \; \lambda=200,
        \; \nu=8000,
        \; x_0=(0,1).
\]
Both realizations are calibrated on \([\mu,L]=[\lambda,\nu]=[200,8000]\), so the terminal Chebyshev polynomial is identical.  The off-diagonal perturbation is a random-sign packet
\(a_s=\rho p_s(c,h)\xi_s, \; \rho=180, \; \xi_s\in\{-1,1\}\),
where \(p(c,h)\) is a normalized Gaussian time profile.  We use \(N=96\), a grid of \(49\) centers and \(9\) widths, and \(384\) sign samples per packet.  The exact predictor is
\[
        \mathrm{RMS}_{\rm pred}^R(c,h)
        =\rho\Big(\sum_{s=0}^{N-1}p_s(c,h)^2|K_s^R(\lambda,\nu)|^2\Big)^{1/2}.
\]
The measured response is the RMS terminal first coordinate, normalized by \(\rho\); since the unperturbed start is \((0,1)\), this is precisely the transverse response predicted by \(K_s^R(\lambda,\nu)\).

\begin{figure}[ht!]
    \centering
    \includegraphics[width=0.75\linewidth]{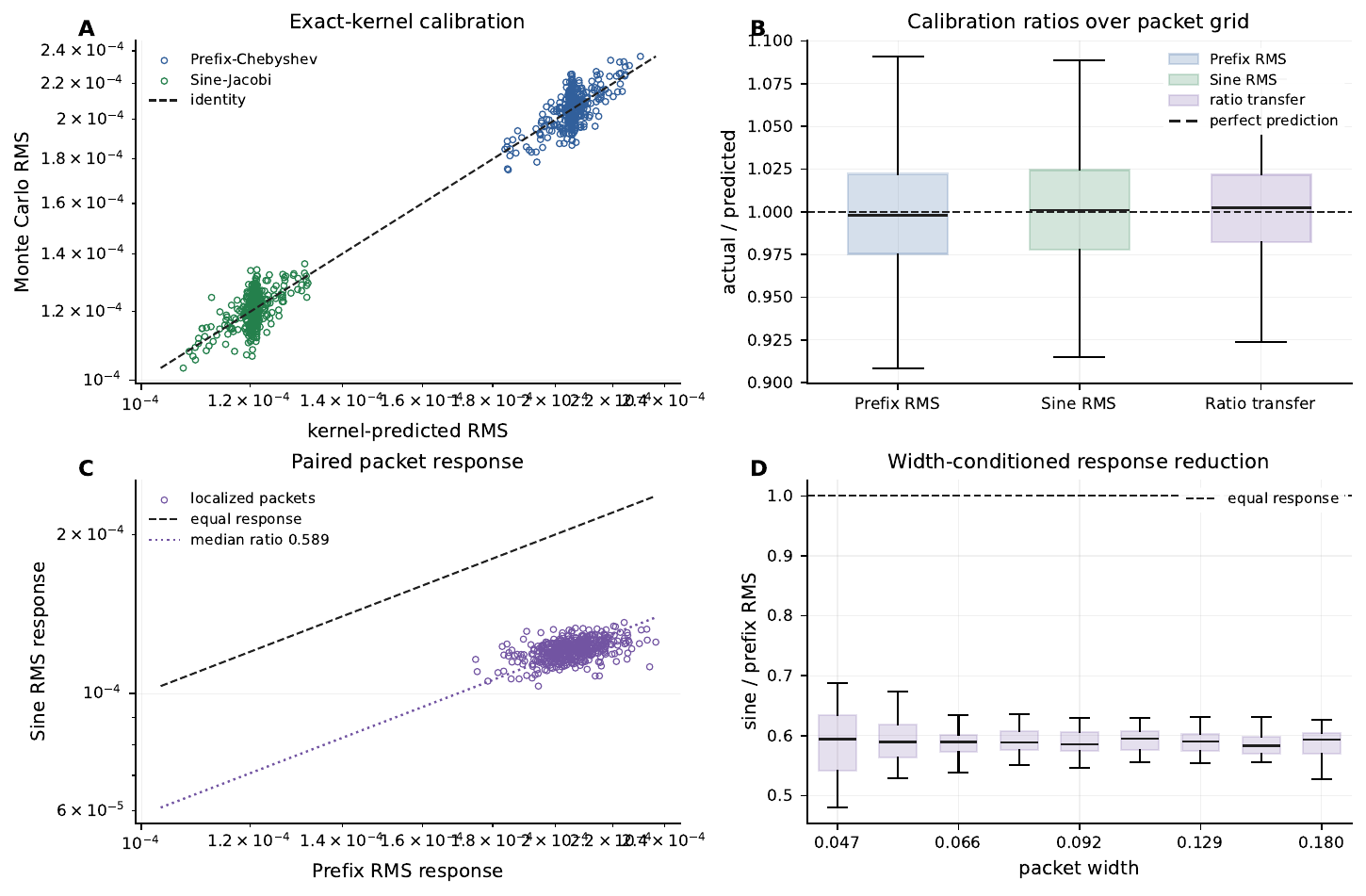}
    \caption{Time-varying quadratic validation of the first-variation kernel.  Panel A compares the exact RMS predictor with the Monte Carlo response.  Panel B summarizes calibration ratios over the packet grid.  Panel C compares the paired prefix and sine packet responses.  Panel D groups the sine/prefix RMS response ratio by packet width.}
    \label{fig:exp-tvq-kernel}
\end{figure}

Figure~\ref{fig:exp-tvq-kernel} shows quantitative agreement.  Median actual/predicted ratios are \(0.998\) for prefix, \(1.001\) for sine, and \(1.002\) for the transfer ratio; the central \(10\)--\(90\%\) ranges are \([0.958,1.044]\), \([0.958,1.045]\), and \([0.968,1.040]\).  The paired comparison is almost entirely below the diagonal: the median sine/prefix RMS ratio is \(0.589\), with \(10\)--\(90\%\) range \([0.557,0.624]\), and width-conditioned boxes stay near \(0.59\).

\subsection{Stochastic curvature at a fixed horizon}
\label{subsec:experiments-stochastic}

At fixed horizon \(N=44\), using the same endpoint-coupled oracle, the overhead is
\(\mathrm{oh}_R(b) =\mathbb E\frac{\|x_N^R\|^2}{\|x_N^R\|^2_{\delta=0}}-1\).
The tail fit uses the pre-specified large-batch set
\(b\in\{1280,1408,1536,1792,2048\}\)
with through-origin fit \(\mathrm{oh}_R(b)\simeq a_N^R/b\).  The fitted coefficients are
\[
        a_{44}^{\rm pref}=125.53\pm 2.42,
        \;
        a_{44}^{\rm sine}=108.14\pm 1.25,
\]
where the intervals are \(95\%\) standard-error intervals from the design-tail fit.  The fitted ratio is \(0.861\), close to the finite first-order squared ratio \(0.871\) in Figure~\ref{fig:exp-theory-constants}.

\begin{figure}[ht!]
    \centering
    \includegraphics[width=0.75\linewidth]{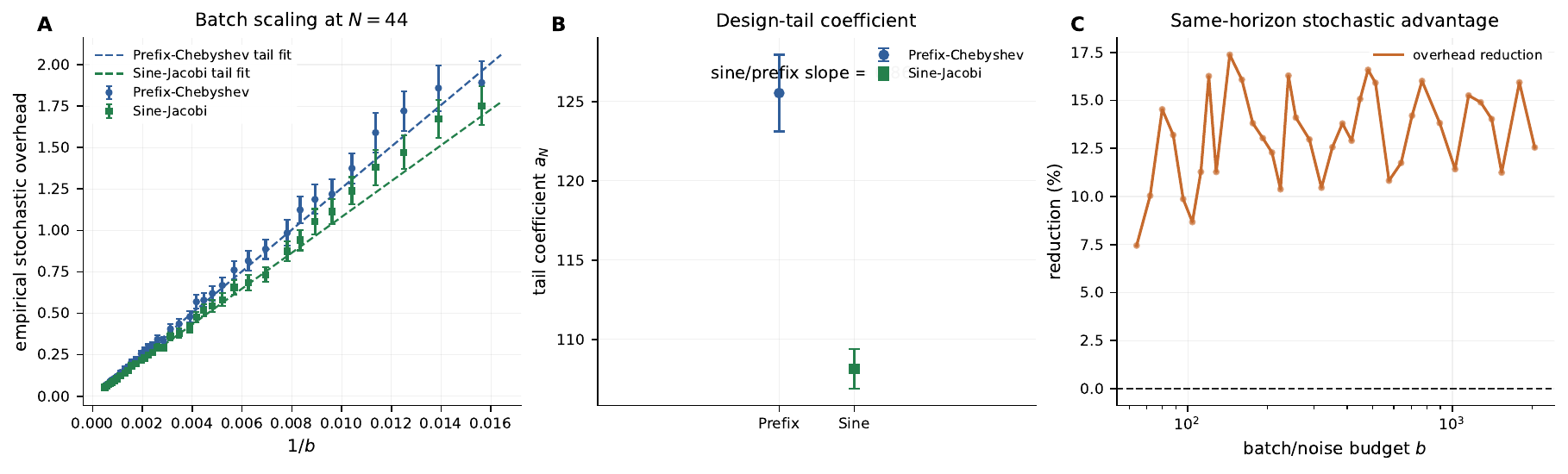}
    \caption{Fixed-horizon stochastic curvature response at \(N=44\).  Panel A plots the measured overhead against \(1/b\) together with the pre-specified design-tail fits.  Panel B compares the fitted tail coefficients.  Panel C shows the same-horizon overhead reduction as a function of the batch/noise budget.}
    \label{fig:exp-stochastic}
\end{figure}

This noisy first-variation test has leading squared-error contribution proportional to \(1/b\).  The coefficient ordering matches the theory: at the same Chebyshev factor and \(N=44\), sine--Jacobi has the smaller time-ordered curvature coefficient.

\subsection{Finite-radius logistic GLM and restarted blocks}
\label{subsec:experiments-glm}

The deterministic finite-radius test uses the regularized logistic objective
\[
        f(x)=\frac1n\sum_{i=1}^n\log\bigl(1+\exp(-y_i a_i^\top x)\bigr)
        +\frac{\lambda_{\rm reg}}2\|x\|^2,\; \lambda_{\rm reg}=0.003 .
\]
The design has anisotropic Gaussian covariance with target condition number \(300\), augmented by endpoint-coupling structured rows.  Labels come from a logistic model with signal direction proportional to \(q_1+0.85q_d+0.35q_{d/2}\).  The instance is smooth and strongly convex, with Hessian derivative near \(x_\star\) coupling the spectral endpoints; after augmentation, \(d=32\) and \(n=2296\).  At the minimizer,
\(\mu=0.015231, \; L=1.4702, \; \kappa=96.53, \; \Delta=1.455\).
The horizon is \(N=36\), with \(\epsilon_N^\star=1.28026\cdot 10^{-3}\).  Starting points are
\(x_0=x_\star+r u_j\),
where the \(64\) directions \(u_j\) combine low-, high-, and middle-curvature eigendirections of \(\nabla^2 f(x_\star)\), plus a small random component.  The block degradation statistic is
\(D_R(r,u_j) :=\frac{\|x_N^R-x_\star\|}{r\epsilon_N^\star}\).
A direction is accepted if \(D_R(r,u_j)\le 2\).  The radius grid contains \(96\) values between \(3\cdot10^{-3}\) and \(1\).

\begin{figure}[ht!]
    \centering
    \includegraphics[width=0.75\linewidth]{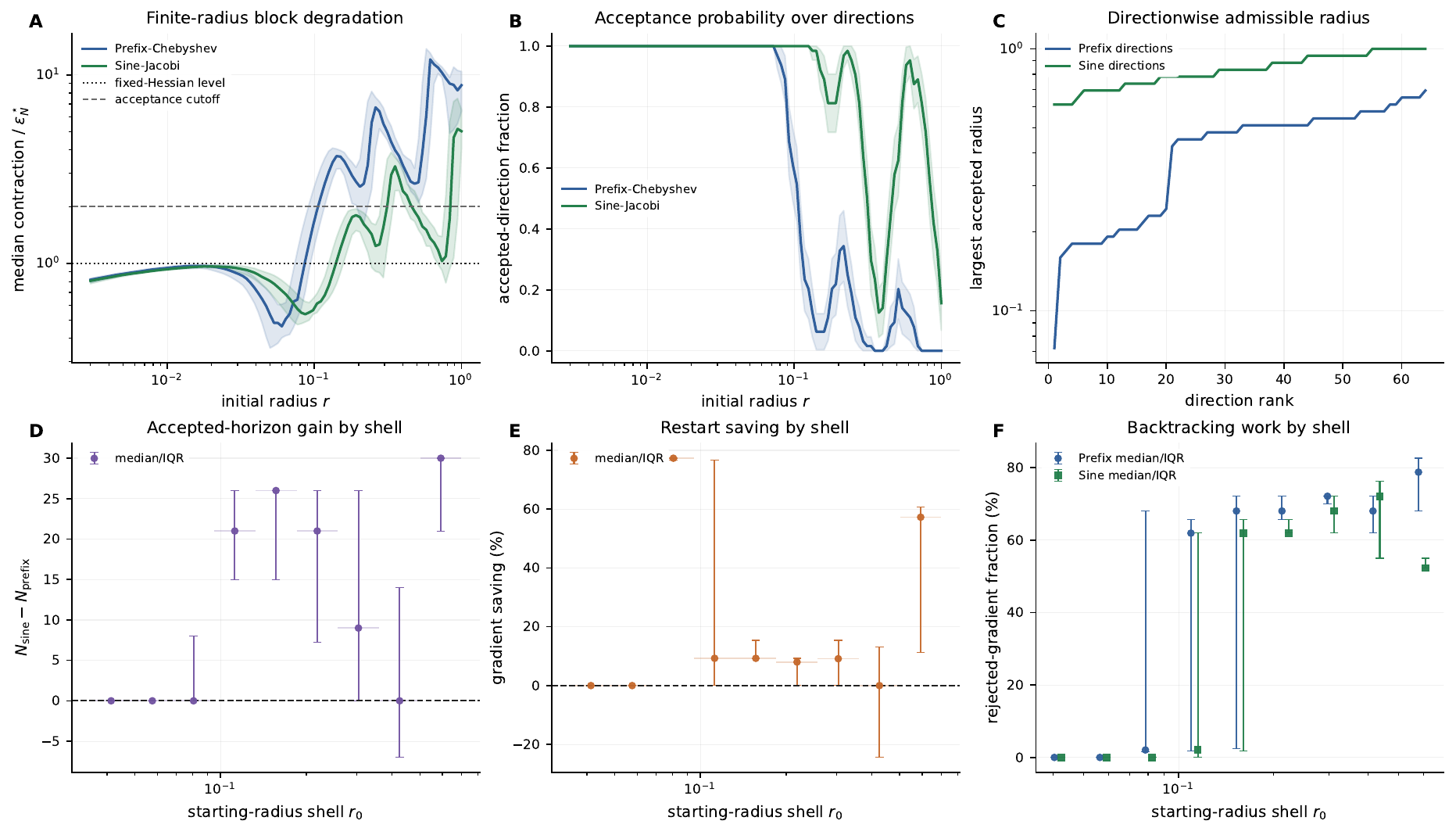}
    \caption{Finite-radius logistic GLM and restarted blocks.  Panels A--C show one-block behavior over endpoint-coupled directions: normalized contraction, accepted-direction fraction, and directionwise admissible radius.  Panels D--F summarize restarted blocks over logarithmic initial-radius bins.  Each point is the median over all tested starting radii and endpoint directions in the bin; vertical bars show the interquartile range, and horizontal ticks show the bin width.}
    \label{fig:exp-glm}
\end{figure}

The one-block finite-radius GLM test shows a separation between the two realizations.  At \(r\approx0.0979\), the prefix median normalized contraction is \(1.58\) and the accepted-direction fraction is \(0.609\), while the sine median is \(0.563\) and all tested directions are accepted.  At \(r\approx0.150\), the prefix accepted fraction drops to \(0.0625\), while the sine accepted fraction remains \(0.922\).  The median largest accepted radius over directions is \(0.495\) for prefix and \(0.832\) for sine, a factor \(1.68\).

The nonmonotone shape in Panels A and B is consistent with the deterministic finite-radius expansion.  With \(H_\star=\nabla^2f(x_\star)\), the terminal block error expands as
\[
        x_N^R(x_\star+r u)-x_\star
        =r p_N^\star(H_\star)u+r^2 Z_R(u)+O(r^3),
        \; r\downarrow0.
\]
Here \(Z_R(u)\) is obtained by inserting the Hessian derivative along the first-order trajectory into the kernel defining \(A_N\), so
\[
        \frac{\|x_N^R(x_\star+r u)-x_\star\|}{r}
        =\|a(u)+r b_R(u)+O(r^2)\|,
        \; a(u)=p_N^\star(H_\star)u.
\]
The derivative at the origin is \(\langle a(u),b_R(u)\rangle/\|a(u)\|\), with sign depending on direction and phase, so no monotonicity in \(r\) is expected.  As \(r\) varies, directions can enter or leave the acceptance set \(\|a(u_j)+r b_R(u_j)+O(r^2)\|\le 2\epsilon_N^\star\).  Endpoint-coupled logistic rows amplify this by coupling low and high curvature, while the sine--Jacobi realization has the same mechanism at smaller amplitude and hence a larger accepted-radius range.

The restart test uses the same GLM and directions, with radii denser near the transition \(r_0\simeq0.1\).  The solver uses \(N_{\max}=44\), \(\eta_{\rm acc}=0.23\), and target radius \(2\cdot10^{-5}\); a trial is accepted if the actual contraction is at most \((1+4\eta_{\rm acc})\epsilon_N^\star\), otherwise the candidate block length is reduced.  Panels D--F report medians and interquartile ranges over fixed logarithmic initial-radius shells.  In the shell containing \(r_0\simeq0.1\), the median first accepted horizon gain is \(21\), median gradient saving is about \(9.3\%\), and the median rejected-gradient fraction drops from \(61.9\%\) for prefix to \(2.1\%\) for sine.  At larger radii both methods backtrack, but sine continues to admit longer blocks.

Overall, with the terminal Chebyshev polynomial fixed, sine--Jacobi admits larger stochastic horizons, has smaller fitted noise coefficients, matches the time-varying quadratic kernel prediction, and gives larger GLM admissible radii with less rejected-gradient work.  The differences therefore come from the ordered realization, not from a different fixed-Hessian residual.

\section*{Conclusion}

The first-order response of a Chebyshev method to Hessian drift is a property of the ordered realization, not only of the terminal polynomial.  The kernel \(K_s(\lambda,\nu)\) and its spectral \(\ell_2\) norm \(A_N\) give the sharp finite-horizon insertion gain.

For the prefix-exact Chebyshev recurrence,
\[
        A_N^{\rm pref}
        =\frac{\epsilon_N^\star}{L-\mu}
        \left(4N^2+16\sum_{m=1}^{N-1}m^2\right)^{1/2}
        =\frac4{\sqrt3}\frac{N^{3/2}}{L-\mu}\epsilon_N^\star(1+o(1)),
\]
and this is optimal in the causal two-term prefix-exact class.  In the larger terminal-exact class, Jacobi coordinates fix the terminal spectrum but leave spectral weights free.  The sine weights preserve the same terminal residual and yield \(2\sqrt{c_{\sin}}\approx2.137936<4/\sqrt3\).  Thus the terminal Chebyshev polynomial determines the fixed-Hessian minimax transfer, but not the Hessian-drift gain of its realizations.

The experiments show the same separation at finite horizons: smaller stochastic overhead coefficients, larger admissible-block frontiers, accurate time-varying quadratic predictions, larger GLM admissible radii, longer accepted restarts, and less rejected-gradient work, all with the terminal residual held fixed.

\appendix

\section{Proofs for Section~\ref{sec:problem-setting}}
\label{app:proofs-sec1}

This appendix proves the first-order expansion used to define the drift gain \(A_N\).

\subsection{First-order Hessian-drift gain}

\begin{proof}[Proof of \Cref{lem:first-order-expansion}]\refstepcounter{prooflink}\label{proof:lem:first-order-expansion}
For an operator \(A\in\End(X)\), set
\[
\mathsf C_s(A)
:=
P_s\otimes I_X-g_sq_s^\top\otimes A
\quad
\text{on } \R^m\otimes X.
\]
Thus, for \(H_s(\delta):=H+\delta B_s\),
\[
\mathsf C_s(H_s(\delta))
=
\mathsf C_s(H)-\delta\,\mathsf G_s(B_s),
\;
\mathsf G_s(B_s):=g_sq_s^\top\otimes B_s.
\]
The output operator corresponding to the time-varying Hessian sequence is
\[
\TT_{\mathcal R}(\delta)
=
(\ell^\top\otimes I_X)
\mathsf C_{N-1}(H_{N-1}(\delta))\cdots
\mathsf C_0(H_0(\delta))
(b\otimes I_X),
\]
where \(b\otimes I_X:X\to \R^m\otimes X\) denotes \(x\mapsto b\otimes x\).

Since every factor is affine in \(\delta\), the product is differentiable at \(\delta=0\).  Applying the product rule to the ordered product gives
\[
\begin{aligned}
\frac{d}{d\delta}\bigg|_{\delta=0}
&
\mathsf C_{N-1}(H_{N-1}(\delta))\cdots
\mathsf C_0(H_0(\delta))
\\
&=
-\sum_{s=0}^{N-1}
\mathsf C_{N-1}(H)\cdots \mathsf C_{s+1}(H)\,
\mathsf G_s(B_s)\,
\mathsf C_{s-1}(H)\cdots \mathsf C_0(H),
\end{aligned}
\]
with the convention that an empty product is the identity on \(\R^m\otimes X\).  Therefore
\[
D\TT_{\mathcal R,H}[B_0,\ldots,B_{N-1}]
=
\sum_{s=0}^{N-1}\mathcal L_{s,H}(B_s),
\]
where
\[
\mathcal L_{s,H}(B_s)
=
-(\ell^\top\otimes I_X)
\mathsf C_{N-1}(H)\cdots \mathsf C_{s+1}(H)\,
(g_sq_s^\top\otimes B_s)\,
\mathsf C_{s-1}(H)\cdots \mathsf C_0(H)
(b\otimes I_X).
\]

It remains to identify the spectral blocks of \(\mathcal L_{s,H}\).  Let \(\Pi_\lambda\) be the orthogonal projector onto the eigenspace \(X_\lambda\) of \(H\).  If \(x\in X_\rho\), then
\[
\mathsf C_j(H)(y\otimes x)
=
(P_jy)\otimes x-(g_jq_j^\top y)\otimes Hx
=
(P_j-\rho g_jq_j^\top)y\otimes x
=
C_j(\rho)y\otimes x.
\]
Hence \(\mathsf C_j(H)\) preserves each fiber \(\R^m\otimes X_\rho\), and its restriction to that fiber is \(C_j(\rho)\otimes I_{X_\rho}\).

Fix two eigenvalues \(\lambda,\nu\) and take \(x\in X_\nu\).  The right part of the product gives
\[
\mathsf C_{s-1}(H)\cdots \mathsf C_0(H)(b\otimes x)
=
\bigl(C_{s-1}(\nu)\cdots C_0(\nu)b\bigr)\otimes x.
\]
After the insertion,
\[
(g_sq_s^\top\otimes B_s)
\left[
\bigl(C_{s-1}(\nu)\cdots C_0(\nu)b\bigr)\otimes x
\right]
=
\bigl(g_sq_s^\top C_{s-1}(\nu)\cdots C_0(\nu)b\bigr)\otimes B_sx.
\]
Projecting to the \(\lambda\)-fiber replaces \(B_sx\) by \(\Pi_\lambda B_sx\).  The left part of the product then acts with spectral parameter \(\lambda\):
\[
\begin{aligned}
&
\mathsf C_{N-1}(H)\cdots \mathsf C_{s+1}(H)
\left[
\bigl(g_sq_s^\top C_{s-1}(\nu)\cdots C_0(\nu)b\bigr)
\otimes \Pi_\lambda B_sx
\right]
\\
&\; =
\left[
C_{N-1}(\lambda)\cdots C_{s+1}(\lambda)
g_sq_s^\top
C_{s-1}(\nu)\cdots C_0(\nu)b
\right]
\otimes \Pi_\lambda B_sx.
\end{aligned}
\]
Finally, applying \(\ell^\top\otimes I_X\) gives
\[
\Pi_\lambda\mathcal L_{s,H}(B_s)x
=
-\ell^\top
C_{N-1}(\lambda)\cdots C_{s+1}(\lambda)
g_sq_s^\top
C_{s-1}(\nu)\cdots C_0(\nu)b
\,
\Pi_\lambda B_sx.
\]
By the definition of \(K_{s,\mathcal R}(\lambda,\nu)\), this is exactly
\[
\Pi_\lambda\mathcal L_{s,H}(B_s)\Pi_\nu
=
K_{s,\mathcal R}(\lambda,\nu)\,
\Pi_\lambda B_s\Pi_\nu.
\]
Equivalently, on the spectral pair \((\lambda,\nu)\),
\[
\mathcal L_{s,H}(B_s)_{\lambda\nu}
=
K_{s,\mathcal R}(\lambda,\nu)(B_s)_{\lambda\nu}.
\]
This proves the asserted first-order expansion.
\end{proof}

\section{Proofs for Section~\ref{sec:cheb-upper}}
\label{app:proofs-sec2}

This appendix contains the proofs of the prefix-exact recurrence identities,
the prefix first-variation kernel, and the Chebyshev upper-bound statements.
For completeness, we include the proof of the prefix-exact recurrence in the normalization used for the first-variation calculation.

\subsection{Prefix-exact recurrence}

\begin{proof}[Proof of \Cref{thm:prefix-exact-chebyshev-recurrence}]\refstepcounter{prooflink}\label{proof:thm:prefix-exact-chebyshev-recurrence}
Since \(\spec(H)\subset[\mu,L]\) and \(0<\mu<L\), the operator \(H\) is
positive definite.  Thus the minimizer of
\(f(x)=\frac12\langle Hx,x\rangle\)
is \(x^\star=0\).  We write
\(e_t:=x_t-x^\star=x_t\).
By the spectral theorem, it is enough to prove the claimed identity on each
eigenspace of \(H\).  Fix an eigenvalue \(\lambda\in\spec(H)\), and let
\(e_t(\lambda)\) denote the corresponding scalar spectral component.  On this
component the recurrence becomes
\[
e_1(\lambda)
=
\left(1-\frac{2\lambda}{L+\mu}\right)e_0(\lambda)
\]
and, for \(t\ge1\),
\[
e_{t+1}(\lambda)
=
(1+\beta_t)
\left(1-\frac{2\lambda}{L+\mu}\right)e_t(\lambda)
-
\beta_t e_{t-1}(\lambda).
\]

We shall prove by induction that
\(e_t(\lambda)=p_t^\star(\lambda)e_0(\lambda) \; \text{for all }t\ge0\).
First,
\(p_0^\star(\lambda)=\frac{T_0(z(\lambda))}{T_0(\eta)}=1\),
so the identity holds at \(t=0\).  Also,
\[
p_1^\star(\lambda)
=
\frac{T_1(z(\lambda))}{T_1(\eta)}
=
\frac{z(\lambda)}{\eta}.
\]
Since
\[
z(\lambda)
=
\frac{L+\mu-2\lambda}{\Delta}
=
\eta\left(1-\frac{2\lambda}{L+\mu}\right),
\]
we get
\(p_1^\star(\lambda) = 1-\frac{2\lambda}{L+\mu}\).
Hence
\(e_1(\lambda)=p_1^\star(\lambda)e_0(\lambda)\).

Assume now that the identity holds at times \(t-1\) and \(t\), with \(t\ge1\).
We prove it at time \(t+1\).  Since \(\eta>1\), one has
\(T_k(\eta)>0 \; \text{for every }k\ge0\),
and therefore the coefficient
\(\beta_t=\frac{T_{t-1}(\eta)}{T_{t+1}(\eta)}\)
is well-defined.  The Chebyshev identity
\(T_{t+1}(w)=2wT_t(w)-T_{t-1}(w)\)
applied at \(w=z(\lambda)\) gives
\[
p_{t+1}^\star(\lambda)
=
\frac{2z(\lambda)T_t(z(\lambda))-T_{t-1}(z(\lambda))}
     {T_{t+1}(\eta)}.
\]
Using
\[
T_t(z(\lambda))=T_t(\eta)p_t^\star(\lambda),
\;
T_{t-1}(z(\lambda))=T_{t-1}(\eta)p_{t-1}^\star(\lambda),
\]
we obtain
\[
p_{t+1}^\star(\lambda)
=
\frac{2z(\lambda)T_t(\eta)}{T_{t+1}(\eta)}
p_t^\star(\lambda)
-
\frac{T_{t-1}(\eta)}{T_{t+1}(\eta)}
p_{t-1}^\star(\lambda).
\]
The second coefficient is \(\beta_t\).  For the first coefficient, use
\(T_{t+1}(\eta)+T_{t-1}(\eta)=2\eta T_t(\eta)\),
which is the same Chebyshev recurrence at \(w=\eta\).  Hence
\[
\frac{2z(\lambda)T_t(\eta)}{T_{t+1}(\eta)}
=
\frac{z(\lambda)}{\eta}
\frac{T_{t+1}(\eta)+T_{t-1}(\eta)}{T_{t+1}(\eta)}
=
\left(1-\frac{2\lambda}{L+\mu}\right)(1+\beta_t).
\]
Therefore
\[
p_{t+1}^\star(\lambda)
=
(1+\beta_t)
\left(1-\frac{2\lambda}{L+\mu}\right)
p_t^\star(\lambda)
-
\beta_t p_{t-1}^\star(\lambda).
\]
Using the induction hypothesis in the scalar recurrence for \(e_t(\lambda)\),
we get
\(e_{t+1}(\lambda) = p_{t+1}^\star(\lambda)e_0(\lambda)\).
This closes the induction on the spectral component \(\lambda\).

Since the spectral decomposition of \(H\) is orthogonal and the above identity
holds on every eigenspace, recombining the spectral components gives
\(e_t=p_t^\star(H)e_0 \; \text{for all }t\ge0\).
Equivalently,
\(x_t-x^\star=p_t^\star(H)(x_0-x^\star)\),
which proves the theorem.
\end{proof}

\subsection{First-order kernel and exact gain}

\begin{lemma}[Chebyshev tail fundamental solution]\label{lem:cheb-tail-fundamental}
Fix \(1\le r\le N\).  For the prefix-exact Chebyshev recurrence, consider the scalar tail problem from time \(r\) to terminal time \(N\), with initial tail data
\(y_{r-1}=0, \; y_r=1\).
Then the terminal value is
\(G_{N,r}(z)=\frac{T_r(\eta)}{T_N(\eta)}U_{N-r}(z)\),
where \(U_m\) is the Chebyshev polynomial of the second kind.  In particular, for the tail after the insertion at time \(s\), \(0\le s\le N-1\),
\(G_{N,s+1}(z)=\frac{T_{s+1}(\eta)}{T_N(\eta)}U_{N-s-1}(z)\).
For \(s=0\), this specializes to
\[
G_{N,1}(z)=\frac{T_1(\eta)}{T_N(\eta)}U_{N-1}(z)
=
\frac{\eta}{T_N(\eta)}U_{N-1}(z).
\]
\end{lemma}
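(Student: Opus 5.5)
The plan is to reduce the scalar tail recurrence to the plain three-term Chebyshev recurrence by a diagonal rescaling, and then read off the terminal value from the standard initial data of \(U_m\).

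On a scalar component with curvature \(\lambda\), the proof of \Cref{thm:prefix-exact-chebyshev-recurrence} gives, for \(t\ge1\),
\[
y_{t+1}=(1+\beta_t)\frac{z}{\eta}\,y_t-\beta_t\,y_{t-1},
\qquad z=z(\lambda).
\]
Here I use \(1-\frac{2\lambda}{L+\mu}=z/\eta\). The tail problem runs this recurrence for \(t=r,\dots,N-1\) with \(y_{r-1}=0\) and \(y_r=1\). Since \(r\ge1\), only steps with \(t\ge1\) occur, so every \(\beta_t\) involved is defined and the special first step \(x_1=x_0-\frac{2}{L+\mu}\nabla f(x_0)\) never enters.

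Set \(w_t:=T_t(\eta)\,y_t\), which is legitimate because \(T_t(\eta)>0\) for \(\eta>1\). Multiplying the recurrence by \(T_{t+1}(\eta)\) and using the two identities from the proof of \Cref{thm:prefix-exact-chebyshev-recurrence},
\[
(1+\beta_t)\,T_{t+1}(\eta)=T_{t+1}(\eta)+T_{t-1}(\eta)=2\eta\,T_t(\eta),
\qquad
\beta_t\,T_{t+1}(\eta)=T_{t-1}(\eta),
\]
yields
\[
w_{t+1}=2z\,w_t-w_{t-1},
\qquad
w_{r-1}=0,\quad w_r=T_r(\eta).
\]
This is the Chebyshev three-term recurrence with \(U\)-type initial data. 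With the convention \(U_{-1}=0\) and \(U_0=1\), the sequence \(m\mapsto U_m(z)\) satisfies the same recurrence. By uniqueness of solutions of a second-order recurrence with given initial data, \(w_t=T_r(\eta)\,U_{t-r}(z)\) for all \(t\ge r-1\).

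Dividing by \(T_N(\eta)\) at \(t=N\) gives
\[
G_{N,r}(z)=\frac{T_r(\eta)}{T_N(\eta)}\,U_{N-r}(z).
\]
The two specializations follow by substitution: taking \(r=s+1\) gives the tail after an insertion at time \(s\), and taking \(r=1\) together with \(T_1(\eta)=\eta\) gives the formula for \(G_{N,1}\).

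I do not expect a real obstacle. The only care needed is at the boundaries. For \(r=N\) there are no tail steps and the formula returns \(U_0=1\), as it should. One should also confirm that the tail indeed starts with the two-term step at \(t=r\ge1\), so that the one-step initialization is never used.
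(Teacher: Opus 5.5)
Your proposal is correct and matches the paper's proof: the paper also rescales by $Y_t:=T_t(\eta)y_t$, uses $T_{t+1}(\eta)+T_{t-1}(\eta)=2\eta T_t(\eta)$ to reduce to $Y_{t+1}=2zY_t-Y_{t-1}$ with $Y_{r-1}=0$, $Y_r=T_r(\eta)$, and identifies the solution as $T_r(\eta)U_{t-r}(z)$. The only difference is that the paper uses an explicit two-step induction where you invoke uniqueness with the convention $U_{-1}=0$, which amounts to the same argument; your boundary checks ($r\ge1$ keeps the initialization step out of the tail, and $r=N$ gives $U_0=1$) are also correct.
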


\begin{proof}[Proof of \Cref{lem:cheb-tail-fundamental}]\refstepcounter{prooflink}\label{proof:lem:cheb-tail-fundamental}
Fix \(1\le r\le N\).  The scalar prefix-exact Chebyshev recurrence in the
variable \(z\) is
\[
y_{t+1}
=
2z\,\frac{T_t(\eta)}{T_{t+1}(\eta)}\,y_t
-
\frac{T_{t-1}(\eta)}{T_{t+1}(\eta)}\,y_{t-1},
\; t\ge1.
\]
Since \(\eta>1\), there exists \(a>0\) such that \(\eta=\cosh a\).  Hence
\(T_t(\eta)=\cosh(ta)>0 \; \text{for all }t\ge0\).
Thus all normalizing factors in the recurrence are nonzero.

Define
\(Y_t:=T_t(\eta)y_t\).
Multiplying the recurrence by \(T_{t+1}(\eta)\), we obtain
\(Y_{t+1}=2zY_t-Y_{t-1}\).
The tail initial data
\(y_{r-1}=0, \; y_r=1\)
become
\(Y_{r-1}=0, \; Y_r=T_r(\eta)\).

Set
\(A:=T_r(\eta)\).
We prove by induction on \(k\ge0\) that
\(Y_{r+k}=A\,U_k(z)\).
For \(k=0\), this is \(Y_r=A=A\,U_0(z)\).  For \(k=1\), the recurrence gives
\(Y_{r+1}=2zY_r-Y_{r-1}=2zA=A\,U_1(z)\).
Assume the identity holds for \(k-1\) and \(k\).  Then
\[
Y_{r+k+1}
=
2zY_{r+k}-Y_{r+k-1}
=
A\bigl(2zU_k(z)-U_{k-1}(z)\bigr)
=
A\,U_{k+1}(z),
\]
where the last equality is the defining three-term recurrence for the
Chebyshev polynomials of the second kind.  Therefore
\(Y_{r+k}=T_r(\eta)U_k(z) \; \text{for all }k\ge0\).
Taking \(k=N-r\), we get
\(Y_N=T_r(\eta)U_{N-r}(z)\).
Dividing by \(T_N(\eta)\) gives
\(y_N = \frac{Y_N}{T_N(\eta)} = \frac{T_r(\eta)}{T_N(\eta)}U_{N-r}(z)\).
This proves the formula for \(G_{N,r}\).  Substituting \(r=s+1\) gives the
displayed formula for the tail after an insertion at time \(s\).  The displayed
specialization for \(s=0\) follows from \(T_1(\eta)=\eta\).
\end{proof}

\begin{proof}[Proof of \Cref{thm:prefix-kernel}]\refstepcounter{prooflink}\label{proof:thm:prefix-kernel}
Fix spectral values
\(\lambda,\nu\in[\mu,L], \; z:=z(\lambda), \; y:=z(\nu)\).
The two variables have different roles: \(y\) is the spectral parameter before
the insertion, and \(z\) is the spectral parameter after the insertion.

We compute the scalar first variation associated with one Hessian insertion at
time \(s\).  Suppose the input error lies in the \(\nu\)-spectral mode and the
inserted perturbation maps that mode into the \(\lambda\)-spectral mode.  The
kernel \(K_s^{\rm pref}(\lambda,\nu)\) is the product of three scalar factors:
the prefix transfer up to the query time \(s\), the negative gradient
coefficient at time \(s\), and the tail transfer from the newly created
\(\lambda\)-mode to time \(N\).

For \(s=0\), the prefix transfer is the identity.  The first step of the
recurrence is
\(e_1=e_0-\frac{2}{L+\mu}H_0e_0\).
Under the perturbation \(H_0=H+\delta B_0\), the derivative at
\(\delta=0\) contributes
\(-\frac{2}{L+\mu}B_0e_0\)
to the state at time \(1\).  Thus the insertion coefficient at \(s=0\) is
\(\gamma_0=\frac{2}{L+\mu}\).
Since
\(L+\mu=\Delta\eta\),
we also have
\(\gamma_0=\frac{2}{\Delta\eta}\).
By \Cref{lem:cheb-tail-fundamental}, the tail transfer from time \(1\) to time
\(N\), with previous state fixed at zero and current state equal to one, is
\[
G_{N,1}(z)
=
\frac{T_1(\eta)}{T_N(\eta)}U_{N-1}(z)
=
\frac{\eta}{T_N(\eta)}U_{N-1}(z).
\]
Therefore
\[
K_0^{\rm pref}(\lambda,\nu)
=
-\gamma_0G_{N,1}(z)
=
-\frac{2}{\Delta T_N(\eta)}U_{N-1}(z).
\]

Now let \(1\le s\le N-1\).  Before the insertion, the \(\nu\)-mode evolves
according to the prefix-exact residual.  Hence, by
\Cref{thm:prefix-exact-chebyshev-recurrence}, the scalar prefix transfer to
time \(s\) is
\(r_s(\nu) = p_s^\star(\nu) = \frac{T_s(y)}{T_s(\eta)}\).
At time \(s\), the recurrence is
\[
e_{s+1}
=
(1+\beta_s)\left(I-\frac{2}{L+\mu}H_s\right)e_s
-
\beta_s e_{s-1},
\;
\beta_s=\frac{T_{s-1}(\eta)}{T_{s+1}(\eta)}.
\]
Under the perturbation \(H_s=H+\delta B_s\), the derivative at
\(\delta=0\) contributes
\(-(1+\beta_s)\frac{2}{L+\mu}B_se_s\)
to the state at time \(s+1\).  Thus the insertion coefficient is
\(\gamma_s = (1+\beta_s)\frac{2}{L+\mu}\).
Using
\(1+\beta_s = \frac{T_{s+1}(\eta)+T_{s-1}(\eta)}{T_{s+1}(\eta)}\)
and the Chebyshev identity
\(T_{s+1}(\eta)+T_{s-1}(\eta)=2\eta T_s(\eta)\),
we obtain
\[
\gamma_s
=
\frac{2}{L+\mu}
\frac{2\eta T_s(\eta)}{T_{s+1}(\eta)}
=
\frac{4T_s(\eta)}{\Delta T_{s+1}(\eta)}.
\]
By \Cref{lem:cheb-tail-fundamental}, the tail transfer from time \(s+1\) to
time \(N\) in the \(\lambda\)-mode is
\(G_{N,s+1}(z) = \frac{T_{s+1}(\eta)}{T_N(\eta)}U_{N-s-1}(z)\).
Multiplying prefix, insertion coefficient, and tail, with the negative sign
from the perturbed gradient step, gives
\[
\begin{aligned}
K_s^{\rm pref}(\lambda,\nu)
&=
-G_{N,s+1}(z)\,\gamma_s\,r_s(\nu) \\
&=
-
\frac{T_{s+1}(\eta)}{T_N(\eta)}U_{N-s-1}(z)
\frac{4T_s(\eta)}{\Delta T_{s+1}(\eta)}
\frac{T_s(y)}{T_s(\eta)} \\
&=
-\frac{4}{\Delta T_N(\eta)}
U_{N-s-1}(z)T_s(y).
\end{aligned}
\]
This proves the stated kernel formulas.

It remains to compute the sharp gain \(A_N^{\rm pref}\).  By definition,
\[
A_N^{\rm pref}
=
\sup_{\lambda,\nu\in[\mu,L]}
\left(
\sum_{s=0}^{N-1}
\abs{K_s^{\rm pref}(\lambda,\nu)}^2
\right)^{1/2}.
\]
The affine map \(\lambda\mapsto z(\lambda)\) sends \([\mu,L]\) onto
\([-1,1]\), and the same is true for \(\nu\mapsto z(\nu)\).  Hence the
supremum may equivalently be taken over \(z,y\in[-1,1]\).

For \(z\in[-1,1]\), write \(z=\cos\theta\).  Then
\(U_m(z)=U_m(\cos\theta)=\frac{\sin((m+1)\theta)}{\sin\theta}\)
for \(0<\theta<\pi\), with endpoint values obtained by continuity.  The
inequality
\(|\sin((m+1)\theta)|\le (m+1)|\sin\theta|\)
gives
\(\abs{U_m(z)}\le m+1, \; z\in[-1,1]\).
Also,
\(\abs{T_s(y)}\le1, \; y\in[-1,1]\).
Therefore, for every \(z,y\in[-1,1]\),
\[
\sum_{s=0}^{N-1}
\abs{K_s^{\rm pref}(\lambda,\nu)}^2
\le
\frac{1}{\Delta^2T_N(\eta)^2}
\left(
4N^2+
16\sum_{s=1}^{N-1}(N-s)^2
\right).
\]
The right-hand side is attained at
\(z=1, \; y=1\),
because
\(U_m(1)=m+1, \; T_s(1)=1\).
Consequently
\[
A_N^{\rm pref}{}^2
=
\frac{1}{\Delta^2T_N(\eta)^2}
\left(
4N^2+
16\sum_{s=1}^{N-1}(N-s)^2
\right).
\]
Since
\(\epsilon_N^\star=\frac1{T_N(\eta)}\),
and since the change of variables \(m=N-s\) gives
\(\sum_{s=1}^{N-1}(N-s)^2 = \sum_{m=1}^{N-1}m^2\),
we obtain
\[
A_N^{\rm pref}
=
\frac{\epsilon_N^\star}{\Delta}
\left(
4N^2+
16\sum_{m=1}^{N-1}m^2
\right)^{1/2}.
\]
Using
\(\sum_{m=1}^{N-1}m^2 = \frac{(N-1)N(2N-1)}6\),
this becomes
\[
A_N^{\rm pref}
=
\frac{2\epsilon_N^\star}{\Delta}
\sqrt{
N^2+\frac23N(N-1)(2N-1)
}.
\]
Finally,
\(N^2+\frac23N(N-1)(2N-1) = \frac43N^3+O(N^2)\),
and therefore
\[
A_N^{\rm pref}
=
\frac{2\epsilon_N^\star}{\Delta}
\sqrt{\frac43N^3+O(N^2)}
=
\frac{4}{\sqrt3}\frac{N^{3/2}}{\Delta}\epsilon_N^\star(1+o(1)).
\]
\end{proof}

\section{Proofs for Section~\ref{sec:prefix-lower-main}}
\label{app:proofs-sec3}

This appendix proves the uniqueness and lower-bound statements for the
causal two-term prefix-exact class.

The following auxiliary theorem is used only inside the proof of the prefix lower bound.

\begin{theorem}[Local uniqueness of causal two-term prefix realization]\label{thm:prefix-realization-uniqueness}
Fix \(M\ge1\).  Consider a causal two-term one-gradient recurrence initialized by
\(x_1=x_0-\frac{2}{L+\mu}\nabla f(x_0)\),
and, for \(1\le t\le M-1\), evolved by
\(x_{t+1}=a_tx_t+b_tx_{t-1}-\gamma_t\nabla f(x_t)\).
Assume that on every quadratic objective
\[
f(x)=\frac12\langle H(x-x^\star),x-x^\star\rangle,
\;
\spec(H)\subset[\mu,L],
\]
the residuals are prefix-exact through time \(M\):
\(x_j-x^\star=p_j^\star(H)(x_0-x^\star) \; \text{for }0\le j\le M\).
Then, for every \(1\le t\le M-1\),
\[
a_t=1+\beta_t,
\;
b_t=-\beta_t,
\;
\gamma_t=(1+\beta_t)\frac{2}{L+\mu}.
\]
\end{theorem}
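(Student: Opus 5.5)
Reduce to scalar quadratics and compare polynomial identities. Taking \(H=\lambda I\) on \(X=\R\) (or restricting to one eigenspace) with \(x^\star=0\), the error components obey
\[
r_{t+1}(\lambda)=(a_t-\gamma_t\lambda)r_t(\lambda)+b_t r_{t-1}(\lambda),\qquad 1\le t\le M-1,
\]
with \(r_0=1\) and \(r_1(\lambda)=1-\frac{2\lambda}{L+\mu}=p_1^\star(\lambda)\). The hypothesis says \(r_j(\lambda)=p_j^\star(\lambda)\) for every \(\lambda\in[\mu,L]\) and \(0\le j\le M\). Both sides are polynomials in \(\lambda\), and \([\mu,L]\) has nonempty interior, so these become identities of polynomials.

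Fix \(t\) with \(1\le t\le M-1\). By exactness at times \(t-1,t,t+1\),
\[
p_{t+1}^\star(\lambda)=(a_t-\gamma_t\lambda)p_t^\star(\lambda)+b_t p_{t-1}^\star(\lambda)
\]
identically in \(\lambda\). In the proof of \Cref{thm:prefix-exact-chebyshev-recurrence} we already showed that the reference coefficients \(\bar a_t=1+\beta_t\), \(\bar b_t=-\beta_t\), \(\bar\gamma_t=(1+\beta_t)\frac{2}{L+\mu}\) satisfy the same identity. Subtracting the two identities gives
\[
\bigl((a_t-\bar a_t)-(\gamma_t-\bar\gamma_t)\lambda\bigr)p_t^\star(\lambda)+(b_t-\bar b_t)p_{t-1}^\star(\lambda)\equiv0 .
\]

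The only real step is to show that \(\{p_t^\star,\lambda p_t^\star,p_{t-1}^\star\}\) is linearly independent as a set of polynomials. The plan is to compare degrees. The polynomial \(p_k^\star(\lambda)=T_k(z(\lambda))/T_k(\eta)\) has exact degree \(k\), because \(z\) is affine with nonzero slope \(-2/\Delta\) and \(T_k(\eta)>0\).

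\begin{itemize}
\item The coefficient of \(\lambda^{t+1}\) comes only from \(\lambda p_t^\star\), so \(\gamma_t=\bar\gamma_t\).
\item Next, \(p_t^\star\) has degree \(t\) while \(p_{t-1}^\star\) has degree \(t-1\). Comparing the \(\lambda^t\) coefficients gives \(a_t=\bar a_t\).
\item What remains is \((b_t-\bar b_t)p_{t-1}^\star\equiv0\). Since \(p_{t-1}^\star(0)=1\neq0\), this forces \(b_t=\bar b_t\).
\end{itemize}

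This is done for each \(t\) separately and needs no induction. Exactness at all three times \(t-1,t,t+1\) is available precisely because \(t+1\le M\). No step is a serious obstacle. The one point to state carefully is the passage from equality on \([\mu,L]\) (or on the spectra of admissible \(H\)) to polynomial identity. It suffices that every \(\lambda\in[\mu,L]\) occurs as the spectrum of \(H=\lambda I\), so the identity holds on an infinite set.
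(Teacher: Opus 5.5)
Your proposal is correct and follows the paper's proof essentially step for step. Like the paper, you restrict to scalar quadratics, subtract the Chebyshev recurrence identity from the proof of \Cref{thm:prefix-exact-chebyshev-recurrence}, and conclude from the exact degrees of \(\lambda p_t^\star\), \(p_t^\star\), \(p_{t-1}^\star\). You are, if anything, slightly more explicit than the paper about passing from equality on \([\mu,L]\) to a polynomial identity.
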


\begin{proof}[Proof of \Cref{thm:prefix-realization-uniqueness}]\refstepcounter{prooflink}\label{proof:thm:prefix-realization-uniqueness}
Fix \(1\le t\le M-1\).  We prove that the coefficients at this time are uniquely
forced by the prefix-exact identities at times \(t-1,t,t+1\), which are among the assumed identities because \(0\le t-1<t<t+1\le M\).

It is enough to work on one-dimensional quadratic objectives.  Let
\(f_\lambda(x)=\frac12\lambda x^2, \; \lambda\in[\mu,L]\).
On this scalar mode, the recurrence gives
\[
e_{t+1}(\lambda)
=
(a_t-\gamma_t\lambda)e_t(\lambda)
+
b_t e_{t-1}(\lambda),
\]
where \(e_j(\lambda)=x_j-x^\star\).  By prefix exactness,
\(e_j(\lambda)=p_j^\star(\lambda)e_0 \; \text{for }j=t-1,t,t+1\).
Since the identity holds for every initial scalar error \(e_0\), we obtain the
polynomial identity
\[
p_{t+1}^\star(\lambda)
=
(a_t-\gamma_t\lambda)p_t^\star(\lambda)
+
b_t p_{t-1}^\star(\lambda).
\]

We now write the known Chebyshev recurrence for the normalized residuals.  By
the proof of \Cref{thm:prefix-exact-chebyshev-recurrence}, for every
\(\lambda\),
\[
p_{t+1}^\star(\lambda)
=
(1+\beta_t)
\left(1-\frac{2\lambda}{L+\mu}\right)
p_t^\star(\lambda)
-
\beta_t p_{t-1}^\star(\lambda).
\]
Subtracting this identity from the previous one gives
\[
\left[
a_t-(1+\beta_t)
-
\left(\gamma_t-(1+\beta_t)\frac{2}{L+\mu}\right)\lambda
\right]p_t^\star(\lambda)
+
(b_t+\beta_t)p_{t-1}^\star(\lambda)
=
0.
\]

The three polynomials
\[
\lambda p_t^\star(\lambda),
\;
p_t^\star(\lambda),
\;
p_{t-1}^\star(\lambda)
\]
are linearly independent.  Indeed, \(p_k^\star\) has degree \(k\), because
\(p_k^\star(\lambda)=\frac{T_k(z(\lambda))}{T_k(\eta)}\)
and \(z(\lambda)\) is a nonconstant affine function of \(\lambda\), while
\(T_k\) has degree \(k\) and \(T_k(\eta)\ne0\).  Therefore
\[
\deg(\lambda p_t^\star)=t+1,
\;
\deg(p_t^\star)=t,
\;
\deg(p_{t-1}^\star)=t-1.
\]
A nontrivial linear combination of these three polynomials cannot vanish
identically, because the highest nonzero degree term would be uncancelled.

Hence all three scalar coefficients in the displayed identity must vanish.
Thus
\(\gamma_t-(1+\beta_t)\frac{2}{L+\mu}=0\),
\(a_t-(1+\beta_t)=0\),
and
\(b_t+\beta_t=0\).
Equivalently,
\[
a_t=1+\beta_t,
\;
b_t=-\beta_t,
\;
\gamma_t=(1+\beta_t)\frac{2}{L+\mu}.
\]
Since \(1\le t\le M-1\) was arbitrary, the conclusion holds for every \(1\le t\le M-1\).
\end{proof}

\begin{proof}[Proof of \Cref{thm:prefix-lower}]\refstepcounter{prooflink}\label{proof:thm:prefix-lower}
We first show that the coefficients of any recurrence in the stated class are
forced by prefix exactness.

On a scalar quadratic objective with curvature \(\lambda\), the first update is
\(e_1(\lambda)=(1-\gamma_0\lambda)e_0\).
Hence the first residual polynomial is
\(r_1(\lambda)=1-\gamma_0\lambda\).
By prefix exactness,
\(r_1(\lambda)=p_1^\star(\lambda)\).
Since
\(p_1^\star(\lambda) = 1-\frac{2\lambda}{L+\mu}\),
we obtain the polynomial identity
\(1-\gamma_0\lambda = 1-\frac{2\lambda}{L+\mu}\).
Therefore
\(\gamma_0=\frac{2}{L+\mu}\).

Apply \Cref{thm:prefix-realization-uniqueness} with \(M=N\).  For every \(1\le t\le N-1\), its coefficients at time \(t\) are uniquely forced:
\[
a_t=1+\beta_t,
\;
b_t=-\beta_t,
\;
\gamma_t=(1+\beta_t)\frac{2}{L+\mu}.
\]
Thus, through terminal time \(N\), the recurrence has exactly the same scalar
update coefficients as the standard prefix-exact Chebyshev recurrence.

It remains to compare the first-order gains.  The first-order Hessian-drift
kernel of a causal two-term one-gradient recurrence is determined by its scalar
prefix transfers, its gradient insertion coefficients, and its scalar tail
fundamental solutions.  All three objects are the same for the present
recurrence and for the standard prefix-exact Chebyshev recurrence, because the
update coefficients agree at every time \(0,\ldots,N-1\).  Therefore
\(K_s(\lambda,\nu)=K_s^{\rm pref}(\lambda,\nu), \; s=0,\ldots,N-1\),
for all \(\lambda,\nu\in[\mu,L]\).  Consequently
\[
A_N=A_N^{\rm pref}.
\]

By \Cref{thm:prefix-kernel},
\[
A_N^{\rm pref}
=
\frac{\epsilon_N^\star}{\Delta}
\left(4N^2+16\sum_{m=1}^{N-1}m^2\right)^{1/2}.
\]
This proves the displayed lower bound.  The same theorem shows that the
standard prefix-exact Chebyshev recurrence attains this value, and hence it is
optimal in the stated causal two-term prefix-exact class.

Finally,
\(\sum_{m=1}^{N-1}m^2 = \frac{(N-1)N(2N-1)}6\),
so
\(4N^2+16\sum_{m=1}^{N-1}m^2 = \frac{16}{3}N^3+O(N^2)\).
Taking square roots gives
\[
\frac{\epsilon_N^\star}{\Delta}
\left(4N^2+16\sum_{m=1}^{N-1}m^2\right)^{1/2}
=
\frac{4}{\sqrt3}\frac{N^{3/2}}{\Delta}\epsilon_N^\star(1+o(1)).
\]
The stated asymptotic lower bound follows.
\end{proof}

\section{Proofs for Section~\ref{sec:jacobi-main}}
\label{app:proofs-sec4}

This appendix gives the proofs of the Jacobi method and the Jacobi
first-variation factorization stated in Section~\ref{sec:jacobi-main}.

\subsection{Jacobi chains and the Chebyshev fibre}

\begin{proof}[Proof of \Cref{thm:jacobi-method}]\refstepcounter{prooflink}\label{proof:thm:jacobi-method}
Set
\[
A_s:=\frac{P_s(\eta)}{P_{s+1}(\eta)},
\;
B_s:=a_s^2\frac{P_{s-1}(\eta)}{P_{s+1}(\eta)},
\; s=0,\ldots,N-1.
\]
The assumption \(P_{s+1}(\eta)>0\) makes all displayed coefficients
well defined.  Since \(a_0=0\) and \(P_{-1}=0\), one has \(B_0=0\).

Let
\[
f(x)=f_\star+\frac12\langle H(x-x_\star),x-x_\star\rangle,
\;
e_s:=x_s-x_\star.
\]
Then \(\nabla f(x_s)=He_s\).  Evaluating the Jacobi recurrence at \(z=\eta\)
gives
\(P_{s+1}(\eta) = (\eta-b_s)P_s(\eta)-a_s^2P_{s-1}(\eta)\).
After division by \(P_{s+1}(\eta)\), this is the fixed-point identity
\(A_s(\eta-b_s)-B_s=1\).
Subtracting \(x_\star\) from the method recurrence therefore cancels the
constant part exactly and gives
\[
e_{s+1}
=
A_s\left(\eta-b_s-\frac{2H}{\Delta}\right)e_s
-
B_s e_{s-1},
\; s=0,\ldots,N-1,
\]
where the \(e_{-1}\)-term is absent at \(s=0\) because \(B_0=0\).

We now identify the scalar residual recurrence.  Put \(r_{-1}:=0\).  For every
\(\lambda\in[\mu,L]\), using
\(z(\lambda)=\eta-\frac{2\lambda}{\Delta}\),
the defining recurrence for \(P_s\) gives
\[
P_{s+1}(z(\lambda))
=
\left(\eta-b_s-\frac{2\lambda}{\Delta}\right)P_s(z(\lambda))
-
a_s^2P_{s-1}(z(\lambda)).
\]
Dividing by \(P_{s+1}(\eta)\) and rewriting the result in terms of
\(r_j(\lambda)=P_j(z(\lambda))/P_j(\eta)\), we obtain
\[
r_{s+1}(\lambda)
=
A_s\left(\eta-b_s-\frac{2\lambda}{\Delta}\right)r_s(\lambda)
-
B_s r_{s-1}(\lambda),
\; s=0,\ldots,N-1,
\]
with \(r_0(\lambda)=1\) and \(r_{-1}(\lambda)=0\).

By the spectral theorem, the operator recurrence for \(e_s\) is obtained from
this scalar recurrence by functional calculus with \(\lambda\) replaced by
\(H\).  Since \(e_0=r_0(H)e_0\), induction over \(s\) gives
\(e_s=r_s(H)e_0, \; s=0,\ldots,N\).

If \(P_N(z)=2^{1-N}T_N(z)\), then
\[
r_N(\lambda)
=
\frac{P_N(z(\lambda))}{P_N(\eta)}
=
\frac{2^{1-N}T_N(z(\lambda))}{2^{1-N}T_N(\eta)}
=
p_N^\star(\lambda).
\]
Applying functional calculus once more yields
\(e_N=r_N(H)e_0=p_N^\star(H)e_0\).
Finally, at \(s=0\) we have \(P_0=1\), \(a_0=0\), and \(P_{-1}=0\), so the
general recurrence reduces to the displayed formula for \(x_1\).
\end{proof}

\subsection{First-variation factorization}

\begin{proof}[Proof of \Cref{thm:jacobi-first-variation-kernel}]\refstepcounter{prooflink}\label{proof:thm:jacobi-first-variation-kernel}
Fix \(s\in\{0,\ldots,N-1\}\).  We compute the three scalar factors in the
definition of the first-variation kernel: the prefix before the insertion, the
insertion coefficient, and the tail after the insertion.

First, the unperturbed Jacobi residual at time \(s\) is
\(r_s(\nu)=\frac{P_s(z(\nu))}{P_s(\eta)}\).
Thus, in \(z\)-coordinates and with \(y=z(\nu)\), the prefix factor multiplying
the input direction before the Hessian insertion at time \(s\) is
\(\frac{P_s(y)}{P_s(\eta)}\).

Second, the method of \Cref{thm:jacobi-method} uses the gradient coefficient
\(\frac{2}{\Delta}\frac{P_s(\eta)}{P_{s+1}(\eta)}\).
A perturbation \(H_s=H+\delta B_s\) therefore contributes, at first order,
the additive term
\(-\frac{2}{\Delta}\frac{P_s(\eta)}{P_{s+1}(\eta)} B_s e_s\)
to \(e_{s+1}\).  Hence the scalar insertion coefficient in the kernel is
\(-\frac{2}{\Delta}\frac{P_s(\eta)}{P_{s+1}(\eta)}\).

It remains to compute the tail transfer from a perturbation injected into
\(e_{s+1}\) to the terminal error \(e_N\).  For \(t\ge s+1\), let
\(\Phi_{N,s+1}(x)\) denote the scalar terminal value of the homogeneous
linearized recurrence in \(z\)-coordinate \(x\), with tail initial data
\(u_s=0,\; u_{s+1}=1\).
The recurrence for \(u_t\) is
\[
u_{t+1}
=
\frac{P_t(\eta)}{P_{t+1}(\eta)}(x-b_t)u_t
-
a_t^2\frac{P_{t-1}(\eta)}{P_{t+1}(\eta)}u_{t-1},
\; t=s+1,\ldots,N-1.
\]
We claim that
\[
\Phi_{N,s+1}(x) = \frac{P_{s+1}(\eta)}{P_N(\eta)} Q_{N-s-1}^{(s+1)}(x).
\]
Indeed, define for \(t=s+1,\ldots,N\)
\(u_t(x) := \frac{P_{s+1}(\eta)}{P_t(\eta)} Q_{t-s-1}^{(s+1)}(x)\),
where
\(Q_{t-s-1}^{(s+1)}(x) = \det(xI-J_{s+1:t-1})\)
and the empty determinant is \(Q_0^{(s+1)}=1\).  Then \(u_{s+1}=1\), while
\(u_s=0\) is the imposed left tail datum.  The tail determinants satisfy the
Jacobi recurrence
\[
Q_{t-s}^{(s+1)}(x)
=
(x-b_t)Q_{t-s-1}^{(s+1)}(x)
-
a_t^2Q_{t-s-2}^{(s+1)}(x),
\; t=s+1,\ldots,N-1,
\]
with \(Q_{-1}^{(s+1)}:=0\).  After multiplication by
\(P_{s+1}(\eta)/P_{t+1}(\eta)\), this is exactly the displayed recurrence for
\(u_{t+1}\).  Hence the claimed tail transfer formula holds.

Multiplying the prefix factor, insertion coefficient, and tail transfer gives
\[
\begin{aligned}
K_s(x,y)
&=
\left(
\frac{P_{s+1}(\eta)}{P_N(\eta)}
Q_{N-s-1}^{(s+1)}(x)
\right)
\left(
-\frac{2}{\Delta}\frac{P_s(\eta)}{P_{s+1}(\eta)}
\right)
\left(
\frac{P_s(y)}{P_s(\eta)}
\right)  \\
&=
-\frac{2}{\Delta P_N(\eta)}
Q_{N-s-1}^{(s+1)}(x)P_s(y).
\end{aligned}
\]
This proves the kernel formula.

Finally, the affine map \(u\mapsto\lambda(u)\) is a bijection from \([-1,1]\)
onto \([\mu,L]\).  Therefore the definition of \(A_N(\mathcal R_J)\) gives
\[
A_N(J)
=
\sup_{x,y\in[-1,1]}
\left(
\sum_{s=0}^{N-1}\abs{K_s(x,y)}^2
\right)^{1/2}.
\]
Substituting the kernel formula and taking the absolute value of the common
normalizing factor \(P_N(\eta)\) yields
\[
A_N(J)
=
\frac{2}{\Delta |P_N(\eta)|}
\sup_{x,y\in[-1,1]}
\left(
\sum_{s=0}^{N-1}
\abs{Q_{N-s-1}^{(s+1)}(x)P_s(y)}^2
\right)^{1/2}.
\]
\end{proof}

\section{Proofs for Section~\ref{sec:sine-upper-main}}
\label{app:proofs-sec5}

This appendix records the finite inverse-spectral and sine-lattice inputs used in Section~\ref{sec:sine-upper-main}.

\phantomsection\label{app:auxiliary-details}
\phantomsection\label{app:finite-inverse-spectral}
We use the finite Favard--Stieltjes inverse-spectral theorem: a positive probability measure on \(N\) distinct real points determines a unique \(N\times N\) Jacobi matrix with positive off-diagonal entries and that first spectral measure.

\subsection{Sine spectral measure, method, and algorithm}

We record the orthogonal-polynomial inputs for the sine-Jacobi upper bound and specialize them to the finite sine lattice.  The root-of-unity Rogers identities give the finite recurrence and maximum principle; the endpoint profile uses Wang--Wong Liouville--Green asymptotics for the exact endpoint recurrence.  The rest is finite-dimensional linear algebra.

\begin{theorem}[Finite sine-lattice package]
\label{thm:sine-lattice-package}
Let \(J_N^{\sin}\) be the Jacobi matrix associated with
\[
\rho_N^{\sin}=\sum_{j=1}^N w_j\delta_{\zeta_j},\;
\zeta_j=\cos\theta_j,\;
\theta_j=\frac{(2j-1)\pi}{2N},\;
w_j=\frac{\sin\theta_j}{\sum_{k=1}^N\sin\theta_k}.
\]
Let \(P_s^{(N)}\), \(s=0,\ldots,N\), be its monic leading polynomials.  For
\(0\le s\le N-1\), let
\(Q_{N-s-1}^{(s+1)}(z)\)
denote the monic characteristic polynomial of the right tail block on the sites
\(s+1,\ldots,N-1\), with the convention \(Q_0^{(N)}(z)=1\).  Then:
\begin{enumerate}[label=(\roman*),leftmargin=2em]
\item
\(b_{s,N}=0, \; s=0,\ldots,N-1\),
and
\(P_N^{(N)}(z)=\det(zI-J_N^{\sin})=2^{1-N}T_N(z)\).
\item For \(1\le s\le N-1\),
\[
a_{s,N}^{2}
=
\frac14\frac{\sin^2(\pi s/N)}
{\sin(\pi(s-\frac12)/N)\sin(\pi(s+\frac12)/N)}.
\]
\item For every compact \(K\subset(0,1)\), uniformly for \(s/N\in K\),
\[
N^2\left(a_{s,N}^2-\frac14\right)\to
V_{\sin}(t):=\frac{\pi^2}{16\sin^2(\pi t)},\; t=\frac{s}{N}.
\]
More precisely,
\(a_{s,N}^2=\frac14+\frac1{N^2}\frac{\pi^2}{16\sin^2(\pi t)}+O_K(N^{-4})\).
\item The endpoint-normalized polynomials satisfy the finite maximum principle
\[
|P_s^{(N)}(x)|\le P_s^{(N)}(1),
\;
-1\le x\le1,
\;
0\le s\le N-1.
\]
Moreover,
\[
\sup_{x,y\in[-1,1]}
\sum_{s=0}^{N-1}|P_s^{(N)}(y)Q_{N-s-1}^{(s+1)}(x)|^2
=
\sum_{s=0}^{N-1}P_s^{(N)}(1)^2Q_{N-s-1}^{(s+1)}(1)^2.
\]
\end{enumerate}
\end{theorem}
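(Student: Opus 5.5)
The plan is to avoid solving the inverse-spectral problem directly. Instead I would exhibit a candidate Jacobi matrix, verify that it has the required spectrum and symmetry, and then invoke uniqueness.

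\textbf{Step 1: identify $J_N^{\sin}$ through a candidate.} Let $\widetilde J$ have zero diagonal and the off-diagonal entries $a_{s,N}$ of (ii). Since $\sin^2(\pi(N-s)/N)=\sin^2(\pi s/N)$ and the two denominator factors are exchanged under $s\mapsto N-s$, we get $a_{s,N}=a_{N-s,N}$. Hence $\widetilde J$ is persymmetric, $R\widetilde JR=\widetilde J$.

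If in addition $\det(zI-\widetilde J)=2^{1-N}T_N(z)$, the remark of Section~\ref{sec:sine-upper-main} applies. For a persymmetric Jacobi matrix with simple spectrum, the first spectral weights are $c/|P_N'(\zeta_j)|\propto\sin\theta_j$. So $\widetilde J$ has first spectral measure $\rho_N^{\sin}$, and by the finite Favard--Stieltjes uniqueness $\widetilde J=J_N^{\sin}$. This gives (i) and (ii) simultaneously.

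It also yields $b_{s,N}=0$ independently. The measure $\rho_N^{\sin}$ is even, because $\zeta_{N+1-j}=-\zeta_j$ and $\sin\theta_{N+1-j}=\sin\theta_j$, and even measures have zero Jacobi diagonal.

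\textbf{Step 2: the determinant identity (main obstacle).} I need the monic leading polynomials of $\widetilde J$, defined by $P_{s+1}=zP_s-a_{s,N}^2P_{s-1}$, to terminate in $P_N=2^{1-N}T_N$.

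My first attempt would be to recognize $2^sP_s(\cos\theta)$ as a Rogers ($q$-ultraspherical) polynomial $C_s(\cos\theta;\beta\mid q)$ with $q$ a root of unity of order $2N$, say $q=e^{i\pi/N}$, and a suitable $\beta=q^{\alpha}$. Their monic recurrence coefficient is
\[
\frac{(1-q^s)(1-\beta^2q^{s-1})}{4(1-\beta q^{s-1})(1-\beta q^{s})}.
\]
At $\beta=q^{1/2}$, up to a real normalization, this becomes exactly $\tfrac14\sin^2(\pi s/N)/[\sin(\pi(s-\tfrac12)/N)\sin(\pi(s+\tfrac12)/N)]$; this reduction needs checking.

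Next I would use the explicit Fourier expansion
\[
C_n(\cos\theta;\beta\mid q)=\sum_{k=0}^n\frac{(\beta;q)_k(\beta;q)_{n-k}}{(q;q)_k(q;q)_{n-k}}e^{i(n-2k)\theta}.
\]
At $n=N$ the factors $(q;q)_k$ contain $1-q^N=2$ and degenerate in a controlled way, and the coefficients with $0<k<N$ should vanish. This would leave $2\cos N\theta=2T_N(\cos\theta)$ after normalization. If this $q$-calculus route resists, the fallback is to verify discrete orthogonality of a guessed trigonometric closed form directly against $\rho_N^{\sin}$, using finite geometric sums over the nodes $\theta_j$.

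\textbf{Step 3: asymptotics (iii).} This part is elementary. Write $\alpha=\pi s/N$ and $h=\pi/(2N)$. Then $\sin(\alpha-h)\sin(\alpha+h)=\sin^2\alpha-\sin^2h$, so
\[
a_{s,N}^2=\tfrac14\bigl(1-\sin^2h/\sin^2\alpha\bigr)^{-1}=\tfrac14+\frac{\sin^2h}{4\sin^2\alpha}+O(h^4/\sin^4\alpha),
\]
which is uniform on compacts $s/N\in K\subset(0,1)$. Since $\sin^2 h=\pi^2/(4N^2)+O(N^{-4})$, the second term is $\pi^2/(16N^2\sin^2(\pi t))+O_K(N^{-4})$, as claimed.

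\textbf{Step 4: maximum principle (iv).} From the Fourier expansion in Step~2, $P_s(\cos\theta)$ is a trigonometric polynomial. I would show that its coefficients are real and nonnegative for $s\le N-1$ at our parameters. With $\beta=q^{1/2}$, pairing $k$ with $s-k$ gives real products $|\cdot|^2$-type ratios of sines whose signs should be checkable. Then $|P_s(x)|\le\sum_k c_k=P_s(1)$.

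For the tail polynomials, persymmetry gives $J_{s+1:N-1}\cong RJ_{0:N-s-2}R$. Hence $Q^{(s+1)}_{N-s-1}=P_{N-s-1}$, and the same bound holds for it. Every summand is therefore bounded by $P_s(1)^2Q^{(s+1)}_{N-s-1}(1)^2$, with equality for all $s$ at $x=y=1$. So the supremum equals the endpoint sum.

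The fallback for the positivity step is a Sonin-type envelope argument adapted to the symmetric, U-shaped profile of $a_{s,N}^2$.
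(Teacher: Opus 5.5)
Your route to (i)--(ii) differs from the paper's, and it is a good one. The paper takes $J_N^{\sin}$ as defined by $\rho_N^{\sin}$. So (i) is immediate: the spectrum is the zero set of $T_N$, and evenness gives $b_{s,N}=0$. Part (ii) then rests on the external identification, via Spiridonov--Zhedanov, of $\rho_N^{\sin}$ as the orthogonality measure of the root-of-unity Rogers family. You instead build $\widetilde J$ from the explicit $a_{s,N}$, read off persymmetry, and reduce (i)--(ii) to the single identity $\det(zI-\widetilde J)=2^{1-N}T_N(z)$. Persymmetry then pins the weights, and Favard--Stieltjes uniqueness gives $\widetilde J=J_N^{\sin}$. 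This needs only the Rogers recurrence and Fourier formula, which (iv) uses anyway.

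You do need the converse of the paper's persymmetry lemma, so state it. It follows from the same residue identity: $R\widetilde JR=\widetilde J$ and simple spectrum force $Rv^{(j)}=\pm v^{(j)}$. Hence the first and last weights agree, and $w_j^2=(a_1\cdots a_{N-1})^2/P_N'(\zeta_j)^2$.

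The gap is Step 2, which carries your whole route; as written it fails.
\begin{itemize}
\item \textbf{Wrong root of unity.} With $q=e^{i\pi/N}$ and $\beta=q^{1/2}$, the Rogers coefficient reduces to $\frac14\sin^2(\pi s/(2N))/[\sin(\pi(s-\frac12)/(2N))\sin(\pi(s+\frac12)/(2N))]$. The lattice spacing is $\pi/(2N)$, not $\pi/N$. Also $1-q^N=2\neq 0$, so nothing degenerates at $n=N$ and nothing kills the middle Fourier coefficients.
\item \textbf{Correct parameters.} You need $q=e^{2\pi i/N}$ (order $N$) and $\beta=e^{i\pi/N}$. With $h=\pi/N$, the factorizations $1-q^r=-2ie^{irh}\sin(rh)$ and $1-\beta q^{r}=-2ie^{i(r+\frac12)h}\sin((r+\frac12)h)$ give exactly $a_{s,N}^2$.
\item \textbf{The singular case $n=N$.} There the Fourier formula for $C_N$ is itself singular, since $(q;q)_N=0$. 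Work with the monic $P_N=\frac{(q;q)_N}{2^N(\beta;q)_N}C_N$ and let $q\to e^{2\pi i/N}$ along $\beta=q^{1/2}$. This is legitimate because the recurrence coefficients for $1\le s\le N-1$ stay finite there, and the Rogers identities are rational in $(q,\beta)$.
\item \textbf{The limit.} Since $(q;q)_k(q;q)_{N-k}\neq 0$ for $0<k<N$ and $(\beta;q)_N\neq 0$, the middle terms vanish. The extreme terms give $2^{-N}(e^{iN\theta}+e^{-iN\theta})=2^{1-N}T_N(\cos\theta)$.
\end{itemize}

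The same factorization settles the positivity you left as ``checkable'' in Step 4; with your $q$ it would concern the wrong family. One has $\frac{1-\beta q^r}{1-q^{r+1}}=e^{-ih/2}\frac{\sin((r+\frac12)h)}{\sin((r+1)h)}$, and the sine ratio is positive for $r+1\le s\le N-1$. So every coefficient of $C_s$ is the common phase $e^{-ish/2}$ times a positive real. This is exactly the paper's argument for (iv).
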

\begin{lemma}[Root-of-unity Rogers/q-ultraspherical specialization \cite{KLS,KS98,Ismail,SpiridonovZhedanov}]
\label{lem:root-unity-rogers}
Let \(N\ge2\), set
\[
h=\frac{\pi}{N},
\;
q_0=\mathrm{e}^{2ih},
\;
\beta=\mathrm{e}^{ih}=q_0^{1/2},
\]
and define
\[
\theta_j=\frac{(2j-1)\pi}{2N},
\;
\zeta_j=\cos\theta_j,
\;
j=1,\ldots,N.
\]
Let
\[
d\rho_N(x)=\sum_{j=1}^N w_j\delta_{\zeta_j},
\;
w_j=\frac{\sin\theta_j}{\sum_{k=1}^N\sin\theta_k}.
\]
The monic orthogonal polynomials for \(d\rho_N\) are the finite
root-of-unity specialization of the continuous \(q\)-ultraspherical, equivalently
Rogers, family with parameters \((q_0,\beta)\).  In monic normalization they
satisfy
\(P_{s+1}(x)=xP_s(x)-a_{s,N}^2P_{s-1}(x), \; 1\le s\le N-1\),
where
\[
a_{s,N}^{2}
=
\frac14
\frac{\sin^2(sh)}
{\sin((s-\frac12)h)\sin((s+\frac12)h)}.
\]
Moreover the endpoint-normalized polynomial has a positive trigonometric
expansion
\[
\frac{P_s(\cos\theta)}{P_s(1)}
=
\sum_{k=0}^s c_{s,k}^{(N)}\cos((s-2k)\theta),
\;
c_{s,k}^{(N)}\ge0,
\;
\sum_{k=0}^s c_{s,k}^{(N)}=1.
\]
\end{lemma}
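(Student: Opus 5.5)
The plan is to run the inverse-spectral logic backwards. Define the Jacobi matrix \(J\) by \(b_s=0\) and the displayed \(a_{s,N}^2\), and identify its monic polynomials with a root-of-unity Rogers specialization. From that specialization, read off \(P_N=2^{1-N}T_N\) and the positive trigonometric expansion. Then use persymmetry of \(J\) to show that its first spectral measure is the sine measure \(\rho_N\). The finite Favard--Stieltjes uniqueness recalled at the start of this appendix then identifies these polynomials with the monic orthogonal polynomials of \(d\rho_N\).

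\textbf{Step 1: recurrence coefficients.} Start from the continuous \(q\)-ultraspherical polynomials \(C_n(x;\beta\mid q)\) for generic \(q\) near \(q_0\). Use their three-term recurrence
\[
2x(1-\beta q^n)C_n=(1-q^{n+1})C_{n+1}+(1-\beta^2q^{n-1})C_{n-1}.
\]
The monic recurrence coefficient is \(a_n^2=\frac{(1-q^n)(1-\beta^2q^{n-1})}{4(1-\beta q^{n-1})(1-\beta q^n)}\), and the diagonal term vanishes. Now set \(\beta=q^{1/2}\), so that \(\beta^2q^{n-1}=q^n\), and put \(q=e^{2ih}\). Each factor is of the form \(1-e^{2i\alpha}=-2ie^{i\alpha}\sin\alpha\). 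The phases in numerator and denominator agree, namely \(e^{2inh}\), and this gives exactly the displayed \(a_{s,N}^2\). For \(1\le s\le N-1\) all sine arguments lie in \((0,\pi)\), so \(a_{s,N}^2>0\) and the limit \(q\to q_0\) is harmless.

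\textbf{Step 2: \(P_N\) and the positive expansion.} Use the explicit formula
\[
C_n(\cos\theta)=\sum_{k=0}^n\frac{(\beta;q)_k(\beta;q)_{n-k}}{(q;q)_k(q;q)_{n-k}}e^{i(n-2k)\theta}.
\]
The monic normalization is \(P_n=2^{-n}\frac{(q;q)_n}{(\beta;q)_n}C_n\). Hence the coefficient of \(e^{i(N-2k)\theta}\) in \(P_N\) is the \(q\)-binomial \(\binom{N}{k}_q\) times a ratio of \(\beta\)-Pochhammers, up to the factor \(2^{-N}\). These ratios are finite and nonzero, since \(\beta q^j=e^{i(2j+1)h}\neq1\) for \(0\le j<N\). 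At the primitive \(N\)-th root \(q_0\), every \(\binom Nk_q\) with \(0<k<N\) vanishes. Therefore \(P_N=2^{-N}(e^{iN\theta}+e^{-iN\theta})=2^{1-N}T_N\), which matches Theorem~\ref{thm:sine-lattice-package}(i).

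For \(s<N\), write
\[
\frac{(\beta;q)_k}{(q;q)_k}=\prod_{j<k}e^{-ih/2}\frac{\sin((j+\frac12)h)}{\sin((j+1)h)}.
\]
The accumulated phase in the \(k\)-th term is \(e^{-ish/2}\), independent of \(k\). So after dividing by the value at \(\theta=0\), the coefficients \(c_{s,k}^{(N)}\) are positive reals summing to \(1\).

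\textbf{Step 3: identifying the spectral measure.} Since \(a_{s,N}=a_{N-s,N}\) and \(b\equiv0\), the matrix \(J\) is persymmetric, and its eigenvalues are the zeros \(\zeta_j\) of \(P_N\). The Christoffel/Gauss weights are \(w_j\propto 1/\bigl(P_{N-1}(\zeta_j)P_N'(\zeta_j)\bigr)\), up to the constant \(\prod_s a_{s,N}^2\).

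For a persymmetric Jacobi matrix, the first and last components of each unit eigenvector have equal modulus. This forces \(P_{N-1}(\zeta_j)^2=\prod_s a_{s,N}^2\), which is constant in \(j\). Hence \(w_j\propto 1/|P_N'(\zeta_j)|\propto\sin\theta_j\), by the computation \(|U_{N-1}(\zeta_j)|=1/\sin\theta_j\) already done in Section~\ref{sec:sine-upper-main}.

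By Favard--Stieltjes uniqueness, \(J=J_N^{\sin}\), and the \(P_s\) are the monic orthogonal polynomials of \(d\rho_N\).

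I expect the main obstacle to be the degenerate specialization \(q\to q_0\). The normalization \((q;q)_N\to0\), so every identity must be taken for generic \(q\) in monic form before passing to the limit. I would first check that the monic coefficients and \(P_s\) for \(s\le N\) depend continuously on \(q\) near \(q_0\), which uses only the non-vanishing of \(1-\beta q^j\) for \(j<N\). If this becomes delicate, the fallback is to verify \(P_N=2^{1-N}T_N\) directly: apply the recurrence of Step 1 with the explicit trigonometric ansatz of Step 2 and check it term by term.
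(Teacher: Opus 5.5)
Your proposal is correct, and it differs from the paper's proof at the one step that matters. Your Step 1 and the positivity half of Step 2 repeat the paper's computations: the monic KLS coefficient specialized at \(q_0=e^{2ih}\), \(\beta=e^{ih}\), and the common phase \(e^{-ish/2}\) in the Rogers Fourier expansion.

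The difference is how you identify the orthogonality measure. The paper imports it from Spiridonov--Zhedanov's Eq.~(22), after reindexing and rescaling their half-integer series. You derive it internally instead. The vanishing of \(\binom{N}{k}_{q_0}\) for \(0<k<N\) gives \(P_N=2^{1-N}T_N\), which fixes the nodes. The reflection symmetry \(a_{s,N}=a_{N-s,N}\), together with the Christoffel--Darboux form of the Gauss weights, forces \(P_{N-1}(\zeta_j)^2=\prod_s a_{s,N}^2\), hence \(w_j\propto 1/|P_N'(\zeta_j)|\propto\sin\theta_j\). This is the converse of \Cref{lem:persym-criterion}, so the two together prove the equivalence asserted in Section~\ref{sec:sine-upper-main}.

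The paper's version is shorter but rests the key identification on an external, index-shifted citation. Yours needs only the Rogers recurrence and Fourier expansion. Both are rational identities in \(p=q^{1/2}\) and survive specialization. The degeneration you flag enters only at degree \(N\): for \(s<N\) every \((q_0;q_0)_k\ne0\), so the formula applies directly at \(q_0\). Your route also gives \(P_s(1)>0\) from the positive coefficients rather than from interlacing. It makes explicit that persymmetry is what selects the sine weights, which matches the paper's own selection rule.
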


\begin{proof}[Proof of \Cref{lem:root-unity-rogers}]\refstepcounter{prooflink}\label{proof:lem:root-unity-rogers}
Spiridonov--Zhedanov write the monic root-of-unity \(q\)-ultraspherical recurrence as \(P_{n+1}+u_nP_{n-1}=xP_n\), give the coefficients \(u_n\) in their Eqs.~(1)--(3), and give the finite positive orthogonality weights in Eq.~(22).  For the half-integer series \(j=1\), their nodes are \(x_s=2\cos((s+\frac12)\pi/N)\), \(s=0,\ldots,N-1\), and the weights are proportional to \(\sin((s+\frac12)\pi/N)\).  After the harmless change of index \(m=s+1\) and the rescaling \(x\mapsto x/2\), this is exactly \(d\rho_N\).  The same continuous \(q\)-ultraspherical/Rogers family and its standard formulas are tabulated in \cite[Sec.~14.10]{KLS} and \cite[Sec.~3.10.1]{KS98}.

It remains to translate the standard recurrence into the monic normalization
used throughout.  In the KLS normalization, equivalently in the rescaled
Spiridonov--Zhedanov recurrence above, the monic recurrence coefficient for
continuous \(q\)-ultraspherical/Rogers polynomials on the \(x=\cos\theta\)
scale is
\[
A_s(q,\beta)
=
\frac{(1-q^s)(1-\beta^2q^{s-1})}
{4(1-\beta q^{s-1})(1-\beta q^s)}.
\]
Substituting \(q=q_0=\mathrm{e}^{2ih}\) and \(\beta=\mathrm{e}^{ih}\), we get
\[
\beta^2q_0^{s-1}=q_0^s,
\;
\beta q_0^{s-1}=\mathrm{e}^{i(2s-1)h},
\;
\beta q_0^s=\mathrm{e}^{i(2s+1)h}.
\]
Therefore
\[
A_s(q_0,\beta)
=
\frac{(1-\mathrm{e}^{2ish})^2}
{4(1-\mathrm{e}^{i(2s-1)h})(1-\mathrm{e}^{i(2s+1)h})}.
\]
Using
\(1-\mathrm{e}^{2iu}=-2i\mathrm{e}^{iu}\sin u\),
we obtain
\[
A_s(q_0,\beta)
=
\frac14
\frac{\sin^2(sh)}
{\sin((s-\frac12)h)\sin((s+\frac12)h)}.
\]
This is the displayed value of \(a_{s,N}^2\).

Finally, the Rogers finite Fourier expansion formula, in the normalization of \cite[Sec.~14.10]{KLS}, gives
\[
C_s(\cos\theta;\beta\mid q_0)
=
\sum_{k=0}^s
\frac{(\beta;q_0)_k(\beta;q_0)_{s-k}}
{(q_0;q_0)_k(q_0;q_0)_{s-k}}
\mathrm{e}^{i(s-2k)\theta}.
\]
For \(0\le k\le s<N\),
\[
\frac{(\beta;q_0)_k}{(q_0;q_0)_k}
=
\mathrm{e}^{-ikh/2}
\prod_{r=0}^{k-1}
\frac{\sin((r+\frac12)h)}{\sin((r+1)h)}.
\]
Hence the product of the \(k\)-part and the \((s-k)\)-part has a common phase
independent of \(k\), multiplied by a positive real number.  Since the zeros of
\(P_s\), \(s<N\), lie in the convex hull of the support, the monic endpoint value
\(P_s(1)\) is positive.  Dividing by this endpoint value at \(\theta=0\) gives a
cosine expansion
\[
\frac{P_s(\cos\theta)}{P_s(1)}
=
\sum_{k=0}^s c_{s,k}^{(N)}\cos((s-2k)\theta)
\]
with
\(c_{s,k}^{(N)}\ge0, \; \sum_{k=0}^s c_{s,k}^{(N)}=1\).
\end{proof}

\begin{theorem}[Endpoint asymptotics for the sine hard-edge recurrence, after Wang--Wong]
\label{thm:discrete-hard-edge-LG}
Let \(F_s^{(N)}\), \(s=0,\ldots,N\), be the positive solution of
\[
F_{s+1}^{(N)}-2F_s^{(N)}+F_{s-1}^{(N)}
=
-\frac1{N^2}V_N\!\left(\frac{s}{N}\right)F_{s-1}^{(N)},
\;
s=1,\ldots,N-1,
\]
with
\(F_0^{(N)}=1, \; F_1^{(N)}=2\),
where
\[
V_N\!\left(\frac{s}{N}\right)
=
N^2
\frac{\sin^2(\pi/(2N))}
{\sin^2(\pi s/N)-\sin^2(\pi/(2N))}.
\]
Let \(f\) be the positive non-logarithmic solution of
\[
f''(t)+\frac{\pi^2}{4\sin^2(\pi t)}f(t)=0,
\;
f(t)\sim\sqrt t
\quad(t\downarrow0).
\]
Then there exist constants \(C_N>0\) such that
\(\frac{F_{\lfloor tN\rfloor}^{(N)}}{C_N\sqrt N} \longrightarrow f(t)\)
locally uniformly on \((0,1)\).  Moreover,
\[
\frac{
F_{\lfloor tN\rfloor}^{(N)}
F_{N-\lfloor tN\rfloor-1}^{(N)}
}{C_N^2N}
\longrightarrow
f(t)f(1-t)
\;
\text{in }L^2(0,1).
\]
\end{theorem}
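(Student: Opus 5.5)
The statement to prove is \Cref{thm:discrete-hard-edge-LG}. The plan is to treat the recurrence as a discretization of the singular ODE \(f''+V f=0\) with \(V(t)=\pi^2/(4\sin^2(\pi t))\) and to compare the discrete solution with the continuum one in three regimes: the left hard edge \(s=O(1)\), the bulk \(s/N\in K\Subset(0,1)\), and the right edge.

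\textbf{Check of the potential.} For fixed \(t\in(0,1)\),
\[
V_N(t)=N^2\frac{\sin^2(\pi/(2N))}{\sin^2(\pi t)-\sin^2(\pi/(2N))}\to\frac{\pi^2}{4\sin^2(\pi t)} .
\]
The convergence is uniform on compacts. The correction is \(O(N^{-2})\) relative to the limit.

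\textbf{Left edge.} Here \(s\) stays bounded, and
\[
\frac{1}{N^2}V_N(s/N)\approx \frac{1}{4(s^2-1/4)}=\frac{1}{(2s-1)(2s+1)} .
\]
This is an exactly solvable discrete Euler-type recurrence, \(F_{s+1}-2F_s+F_{s-1}=-F_{s-1}/((2s-1)(2s+1))\). I will solve it in closed form. I expect a ratio of Gamma functions, \(F_s\propto \Gamma(s+\tfrac12)/\Gamma(s)\), giving \(F_s\sim c\sqrt s\). This matches the non-logarithmic Frobenius branch \(t^{1/2}\) of the indicial equation \(r(r-1)+1/4=0\). The double root \(r=1/2\) is exactly why the logarithmic branch \(\sqrt t\log t\) must be excluded, and the initial data \(F_0=1,F_1=2\) select it.

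\textbf{Matching.} In an intermediate zone \(1\ll s\ll N\) I will use the Wang--Wong Liouville--Green theory for second-order difference equations with a turning point / hard edge. It gives uniform approximation of the form \(F_s\approx C_N\sqrt N\,[f(s/N)+o(1)]\), with the error controlled by a discrete Volterra summation argument. Concretely, write \(F_s=C_N\sqrt N\,(f(s/N)+\varepsilon_s)\) and use discrete variation of constants with the two continuum solutions \(f\) and \(g\sim\sqrt t\log t\) as a fundamental system. The local truncation error is \(O(N^{-2}\,\sup|f''''|)\) in the bulk. Near \(t=0\) it behaves like \(s^{-3/2}\)-type terms, which are summable after the Gamma-function matching. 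A Gronwall bound then gives \(\varepsilon_s\to0\) locally uniformly, and \(C_N\) is fixed by matching at the inner scale.

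\textbf{Right edge and \(L^2\) convergence.} Near \(t=1\) the potential blows up symmetrically. By the reflection \(\sin(\pi s/N)=\sin(\pi(N-s)/N)\), the same edge analysis shows \(F_s/(C_N\sqrt N)\) stays bounded by \(C\sqrt{1-s/N}\,(1+|\log(1-s/N)|)\), since generically the logarithmic branch appears there. For the product \(F_{\lfloor tN\rfloor}F_{N-\lfloor tN\rfloor-1}/(C_N^2N)\), local uniform convergence gives convergence on compacts. Near either endpoint, one factor is \(O(\sqrt t\,|\log t|)\) and the other is \(O(1)\), so uniform domination by an \(L^2\)-integrable envelope and dominated convergence give \(L^2(0,1)\) convergence.

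\textbf{Main obstacle.} The hardest step is the uniform two-sided edge bound, i.e., controlling the discrete solution all the way to \(s=N\) where \(V_N\) is large. I would first try to establish monotonicity and concavity of \(F_s\) from the positive sign structure, which would give crude envelopes, and then sharpen them with the Gamma-function comparison.
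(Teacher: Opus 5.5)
Your plan has the same architecture as the paper's proof. Both use a Liouville--Green comparison with $f''+Vf=0$ in the bulk (the paper simply invokes Wang--Wong), selection of the non-logarithmic branch at the double indicial root $\tfrac12$, and a majorant plus dominated convergence for the reflected product. Your exactly solvable edge problem supplies the branch-selection argument that the paper only asserts. However, the closed form you expect is wrong. $\Gamma(s+\tfrac12)/\Gamma(s)$ vanishes at $s=0$ and does not satisfy $F_{s+1}=2F_s-\frac{4s^2}{4s^2-1}F_{s-1}$. Since $4a_{s,N}^2\to 4s^2/(4s^2-1)$, this limit is the monic Legendre recurrence at $x=1$, and the solution with $F_0=1$, $F_1=2$ is
\[
F_s=\frac{4^s(s!)^2}{(2s)!}=\frac{\sqrt\pi\,\Gamma(s+1)}{\Gamma(s+\frac12)}=\sqrt{\pi s}\Bigl(1+\frac1{8s}+O(s^{-2})\Bigr).
\]
It has no $\sqrt s\log s$ term, which is what selects $f$, and matching with $C_N\sqrt N f(s/N)\approx C_N\sqrt s$ gives $C_N\to\sqrt\pi$. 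Your function is instead the non-logarithmic solution of the limit recurrence for $G_u=F_{N-u}$, namely $G_{u-1}-2G_u+\frac{4u^2}{4u^2-1}G_{u+1}=0$. Because the potential multiplies $F_{s-1}$, the two edges are not mirror images, so ``the same edge analysis by reflection'' is not literally available.

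The same asymmetry makes the scheme only first-order consistent. The residual of the sampled $f$ is $-N^{-3}Vf'+O(N^{-4})$, not $O(N^{-4}f'''')$. At the left edge it is of order $s^{-5/2}$ (precisely $-\tfrac18 s^{-5/2}$ for the profile $\sqrt s$). That order is what makes the Volterra sum against the kernel $\asymp\sqrt{sk}\,\log(s/k)$ converge; a residual of order $s^{-3/2}$ would give a divergent $\sum_k k^{-1}\log(s/k)$. Even so, absolute-value/Gronwall bounds started at $s=M$ only yield $|\varepsilon_s|=O(\log(N/M)/M)$ on compacts. The matching index must therefore satisfy $\log N\ll M\ll N$, e.g.\ $M=\lfloor\sqrt N\rfloor$. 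On $s\le M$ the Legendre approximation costs only a relative $O(M^2/N^2)$, because $\frac1{N^2}V_N(s/N)-\frac1{4s^2-1}=O(N^{-2})$ there.

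Finally, the right edge, which you single out as the main obstacle, is not needed. Since $F_s>0$ and the right-hand side of the recurrence is negative, $F$ is concave on $\{0,\dots,N\}$. It is not monotone: it rises to order $\sqrt N$ and returns to $F_N=2^NP_N(1)=2$. Compare chord slopes at the interior indices $\lfloor N/4\rfloor,\lfloor N/2\rfloor,\lfloor 3N/4\rfloor$ and use the local uniform convergence. This gives $\sup_{0\le s\le N}F_s\le K\,C_N\sqrt N$ for large $N$, so the reflected product is bounded by $K^2$ and converges pointwise on $(0,1)$. Dominated convergence with a constant majorant then gives the $L^2$ statement. A constant is also the right majorant: an envelope $C\sqrt t\,(1+|\log t|)$ cannot hold uniformly for $t<1/N$, where the normalized value is $1/(C_N\sqrt N)$.
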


\begin{proof}[Verification of the Liouville--Green asymptotic input]
We use Wang--Wong's Liouville--Green theory for second-order linear difference equations with coalescing characteristic roots/turning points \cite{WangWongDifference}.  Their framework applies to three-term recurrences whose coefficients admit asymptotic expansions, and it gives uniform asymptotic expansions for the corresponding solutions.  The required specialized consequence is the following: for the explicit recurrence displayed above, the positive solution selected by \(F_0^{(N)}=1\), \(F_1^{(N)}=2\) is, after one scalar normalization, the regular branch of the limiting Liouville--Green problem; the expansion is locally uniform away from the endpoints and has the usual endpoint error bounds for the regular branch.

We verify the input data in this explicit case.  Locally uniformly on \((0,1)\),
\(V_N(t) \to V(t):=\frac{\pi^2}{4\sin^2(\pi t)}\), and the convergence has a complete expansion on compact subintervals of \((0,1)\), obtained by expanding the displayed sine quotient in powers of \(N^{-1}\).  At the endpoints,
\[
V(t)
=
\frac1{4t^2}+O(1)
\quad(t\downarrow0),
\;
V(t)
=
\frac1{4(1-t)^2}+O(1)
\quad(t\uparrow1).
\]
The finite coefficients \(V_N\) have the corresponding endpoint expansions, obtained directly from the displayed sine formula; in particular, after writing \(s=\lfloor tN\rfloor\), the difference equation is the second-order central-difference approximation of \(f''+Vf=0\) with a regular-singular inverse-square endpoint.  The endpoint indicial equation has the repeated exponent \(1/2\); the second branch contains the logarithmic term, while the initial data \(F_0^{(N)}=1\), \(F_1^{(N)}=2\) select the positive regular, non-logarithmic branch \(f(t)\sim\sqrt t\).  Wang--Wong's regular-branch expansion therefore gives constants \(C_N>0\) for which \(F_{\lfloor tN\rfloor}^{(N)}/(C_N\sqrt N)\to f(t)\) locally uniformly on \((0,1)\).

It remains only to pass from local convergence to the stated product convergence.  The same regular-branch endpoint bounds give, uniformly in \(N\),
\[
        \frac{F_{\lfloor tN\rfloor}^{(N)}}{C_N\sqrt N}
        \le C\sqrt{t}(1+|\log t|)
        \quad(0<t\le1/2),
\]
and the reflected estimate near \(t=1\).  Hence the normalized reflected products are dominated by an \(L^2(0,1)\) function, for instance by a constant multiple of \(\sqrt{t(1-t)}(1+|\log t|)(1+|\log(1-t)|)\).  Dominated convergence gives the asserted \(L^2(0,1)\) convergence.
\end{proof}

\begin{lemma}[Endpoint asymptotics for the finite sine lattice]
\label{lem:sine-endpoint-asymptotic}
Let
\(F_s^{(N)}:=2^sP_s^{(N)}(1)\),
where \(P_s^{(N)}\) is the monic sine-Jacobi polynomial.  Let \(f\) be the
positive non-logarithmic solution of
\[
f''(t)+\frac{\pi^2}{4\sin^2(\pi t)}f(t)=0,
\;
f(t)\sim\sqrt t\quad(t\downarrow0).
\]
Then there are constants \(C_N>0\) such that
\(\frac{F_{\lfloor tN\rfloor}^{(N)}}{C_N\sqrt N}\longrightarrow f(t)\)
locally uniformly on \((0,1)\).  Moreover, if
\[
G_N(t):=
\frac{
F_{\lfloor tN\rfloor}^{(N)}
F_{N-\lfloor tN\rfloor-1}^{(N)}
}{C_N^2N},
\; 0\le t<1,
\]
then
\(G_N\to f(t)f(1-t) \; \text{in }L^2(0,1)\).
Equivalently,
\[
\frac1N\sum_{s=0}^{N-1}
\frac{F_s^{(N)}F_{N-s-1}^{(N)}}{C_N^2N}
\longrightarrow
\int_0^1 f(t)f(1-t)\,dt
\]
and
\[
\frac1N\sum_{s=0}^{N-1}
\left|
\frac{F_s^{(N)}F_{N-s-1}^{(N)}}{C_N^2N}
\right|^2
\longrightarrow
\int_0^1 f(t)^2f(1-t)^2\,dt.
\]
\end{lemma}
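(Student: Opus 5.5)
The plan is to reduce this lemma to the preceding Theorem on endpoint asymptotics for the sine hard-edge recurrence, after Wang--Wong. The only real work is to show that \(F_s^{(N)}:=2^sP_s^{(N)}(1)\) is exactly the positive solution of the displayed second-order difference equation with \(F_0^{(N)}=1\), \(F_1^{(N)}=2\).

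\textbf{Step 1: the recurrence for \(F_s^{(N)}\).}
\begin{itemize}
\item By \Cref{thm:sine-lattice-package}(i), \(b_{s,N}=0\). The monic recurrence at \(x=1\) therefore reads \(P_{s+1}(1)=P_s(1)-a_{s,N}^2P_{s-1}(1)\).
\item Multiplying by \(2^{s+1}\) gives \(F_{s+1}=2F_s-4a_{s,N}^2F_{s-1}\), with \(F_0=1\) and \(F_1=2P_1(1)=2\).
\item Hence \(F_{s+1}-2F_s+F_{s-1}=-(4a_{s,N}^2-1)F_{s-1}\).
\end{itemize}

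\textbf{Step 2: matching the coefficient.} It remains to check \(4a_{s,N}^2-1=N^{-2}V_N(s/N)\). Using \(\sin(u-v)\sin(u+v)=\sin^2u-\sin^2v\) with \(u=\pi s/N\) and \(v=\pi/(2N)\), part (ii) of \Cref{thm:sine-lattice-package} gives
\[
4a_{s,N}^2=\frac{\sin^2(\pi s/N)}{\sin^2(\pi s/N)-\sin^2(\pi/(2N))}.
\]
Subtracting \(1\) yields \(\sin^2(\pi/(2N))/(\sin^2(\pi s/N)-\sin^2(\pi/(2N)))\), which is \(N^{-2}V_N(s/N)\) as defined in \Cref{thm:discrete-hard-edge-LG}.

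\textbf{Step 3: positivity.} For \(s\le N-1\), \(F_s^{(N)}>0\) because the zeros of \(P_s\) lie strictly inside the convex hull of the nodes, which is contained in \((-1,1)\). The case \(s=N\) is not needed for the products. This also keeps the denominators in \(V_N\) positive for \(1\le s\le N-1\). So \(F^{(N)}\) is precisely the solution singled out in \Cref{thm:discrete-hard-edge-LG}.

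\textbf{Step 4: the conclusions.}
\begin{itemize}
\item \Cref{thm:discrete-hard-edge-LG} immediately gives the locally uniform convergence \(F_{\lfloor tN\rfloor}^{(N)}/(C_N\sqrt N)\to f(t)\) and the \(L^2(0,1)\) convergence \(G_N\to f(t)f(1-t)\).
\item For the two "equivalently" statements, note that \(G_N\) is a step function constant on \([s/N,(s+1)/N)\). Hence \(\int_0^1G_N=\frac1N\sum_s F_sF_{N-s-1}/(C_N^2N)\) and \(\int_0^1G_N^2=\frac1N\sum_s(\cdot)^2\).
\item \(L^2\) convergence on the finite-measure interval \((0,1)\) implies \(L^1\) convergence, giving the first limit. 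It also gives convergence of \(L^2\) norms, \(\|G_N\|_2\to\|f(1-\cdot)f\|_2\), giving the second limit.
\end{itemize}

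The main obstacle is entirely carried by \Cref{thm:discrete-hard-edge-LG}, the uniform Liouville--Green regular-branch asymptotics with endpoint domination. Once that is assumed, the only delicate point here is the exact trigonometric identification in Step 2 and confirming the initial data. Both are algebra, so I expect no further difficulty.
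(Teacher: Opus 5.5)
Your proposal is correct and follows the paper's proof essentially step for step. The common route is: take the monic recurrence at $x=1$ with $b_{s,N}=0$, multiply by $2^{s+1}$, use $\sin(u-v)\sin(u+v)=\sin^2u-\sin^2v$ to identify $4a_{s,N}^2-1$ with $N^{-2}V_N(s/N)$, check the initial data $F_0=1$, $F_1=2$, and then apply \Cref{thm:discrete-hard-edge-LG}, reading the two Riemann-sum limits off as the $L^1$ and $L^2$-norm consequences for the step function $G_N$. Your positivity check in Step~3 is harmless but not needed, since the recurrence and the initial data already determine the sequence uniquely.
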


\begin{proof}[Proof of \Cref{lem:sine-endpoint-asymptotic}]
By \Cref{thm:sine-lattice-package}, the sine-Jacobi recurrence has
\(b_{s,N}=0\) and
\[
a_{s,N}^{2}
=
\frac14
\frac{\sin^2(\pi s/N)}
{\sin(\pi(s-\frac12)/N)\sin(\pi(s+\frac12)/N)}.
\]
The monic recurrence at \(x=1\) is
\(P_{s+1}^{(N)}(1) = P_s^{(N)}(1)-a_{s,N}^2P_{s-1}^{(N)}(1)\).
Multiplying by \(2^{s+1}\), and using
\(F_s^{(N)}=2^sP_s^{(N)}(1)\),
gives
\[
F_{s+1}^{(N)}-2F_s^{(N)}+F_{s-1}^{(N)}
=
-4\left(a_{s,N}^2-\frac14\right)F_{s-1}^{(N)}.
\]
Since
\[
\sin((s-\tfrac12)\pi/N)\sin((s+\tfrac12)\pi/N)
=
\sin^2(\pi s/N)-\sin^2(\pi/(2N)),
\]
we have
\[
4N^2\left(a_{s,N}^2-\frac14\right)
=
N^2
\frac{\sin^2(\pi/(2N))}
{\sin^2(\pi s/N)-\sin^2(\pi/(2N))}.
\]
Thus the endpoint sequence \(F_s^{(N)}\) satisfies exactly the recurrence in
\Cref{thm:discrete-hard-edge-LG}, with
\(F_0^{(N)}=1, \; F_1^{(N)}=2\).
Applying \Cref{thm:discrete-hard-edge-LG} gives the local uniform convergence
and the reflected-product convergence in \(L^2(0,1)\).  The two displayed
Riemann-sum limits are the \(L^1\) and \(L^2\) consequences of that convergence
for the corresponding step functions.
\end{proof}

\begin{lemma}[Persymmetric inverse-spectral criterion]\label{lem:persym-criterion}
Let \(N\ge1\), let \(\lambda_1<\cdots<\lambda_N\) be real numbers, and set
\(P_N(z)=\prod_{j=1}^N(z-\lambda_j)\).
Let \(J\) be the \(N\times N\) Jacobi matrix with positive off-diagonal
coefficients whose first spectral measure is
\(\rho=\sum_{j=1}^Nw_j\delta_{\lambda_j}\).
Assume that
\(w_j=\frac{c}{|P_N'(\lambda_j)|}, \; j=1,\ldots,N\),
where \(c>0\) is the normalizing constant.  Then \(J\) is persymmetric:
\(RJR=J, \; R e_s=e_{N-1-s}\).
Equivalently,
\(b_s=b_{N-1-s}, \; s=0,\ldots,N-1\),
and
\(a_s=a_{N-s}, \; s=1,\ldots,N-1\).
\end{lemma}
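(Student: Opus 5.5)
We need to prove the persymmetric criterion: if $w_j\propto 1/|P_N'(\lambda_j)|$, then $RJR=J$. The plan is to show that $\tilde J:=RJR$ is again a Jacobi matrix with positive off-diagonals, has the same spectrum, and has the same first spectral measure $\rho$. The uniqueness part of the finite Favard--Stieltjes theorem then gives $\tilde J=J$.

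First, $\tilde J$ is tridiagonal with diagonal $(b_{N-1},\ldots,b_0)$ and off-diagonals $(a_{N-1},\ldots,a_1)$, all still positive. Since it is orthogonally similar to $J$, its eigenvalues are the same simple $\lambda_j$. Its first spectral measure is built from the first components of its normalized eigenvectors. If $v^{(j)}$ is a unit eigenvector of $J$, then $Rv^{(j)}$ is one for $\tilde J$. So the weights of $\tilde J$ are $\tilde w_j=(v^{(j)}_{N-1})^2$, the squared last components of the eigenvectors of $J$. The claim therefore reduces to the identity $(v^{(j)}_{N-1})^2=w_j=(v^{(j)}_0)^2$ under the stated hypothesis.

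For this we use the standard eigenvector formula for Jacobi matrices. An unnormalized eigenvector for $\lambda_j$ is $u_s=P_s(\lambda_j)/(a_1\cdots a_s)$, $s=0,\ldots,N-1$, where $P_s$ is the monic leading principal characteristic polynomial. This gives
\[
w_j=\frac{u_0^2}{\|u\|^2}=\frac{1}{\|u\|^2},
\qquad
\tilde w_j=\frac{P_{N-1}(\lambda_j)^2}{(a_1\cdots a_{N-1})^2\|u\|^2},
\]
so $\tilde w_j=w_j\,P_{N-1}(\lambda_j)^2/(a_1\cdots a_{N-1})^2$. The key classical input is the formula
\[
w_j=\frac{(a_1\cdots a_{N-1})^2}{P_{N-1}(\lambda_j)\,P_N'(\lambda_j)} .
\]
It follows from the Christoffel--Darboux identity evaluated at $x=y=\lambda_j$, which gives $\|u\|^2=P_N'(\lambda_j)P_{N-1}(\lambda_j)/(a_1\cdots a_{N-1})^2$. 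Equivalently, it is the residue expansion of the $(0,0)$ resolvent entry $P^{(1)}_{N-1}(z)/P_N(z)$ combined with the Wronskian-type identity between $P_{N-1}$ and the associated polynomial.

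With this formula, $\tilde w_j=(a_1\cdots a_{N-1})^2/\bigl(P_{N-1}(\lambda_j)P_N'(\lambda_j)\bigr)\cdot P_{N-1}(\lambda_j)^2/(a_1\cdots a_{N-1})^2$. Hence $\tilde w_j\,w_j=1/P_N'(\lambda_j)^2$, where the product is taken with signs removed, since $P_{N-1}(\lambda_j)P_N'(\lambda_j)>0$ by interlacing. Now insert the hypothesis $w_j=c/|P_N'(\lambda_j)|$. It gives $\tilde w_j=1/(c\,|P_N'(\lambda_j)|)$, so $\tilde w_j$ is also proportional to $1/|P_N'(\lambda_j)|$. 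Both $w$ and $\tilde w$ are probability vectors, so the constants must agree and $\tilde w=w$. Favard uniqueness then yields $RJR=J$, and reading off the entries gives $b_s=b_{N-1-s}$ and $a_s=a_{N-s}$.

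The main obstacle is establishing the product identity $w_j\tilde w_j=P_N'(\lambda_j)^{-2}$ cleanly, including the sign and normalization bookkeeping. I would derive it from Christoffel--Darboux at a zero of $P_N$, checking that the case $N=1$ is trivial.
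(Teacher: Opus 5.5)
Your argument follows the same structure as the paper's proof: reduce to showing that the last-component weights $\tilde w_j$ equal $w_j$, deduce $\tilde w_j\propto 1/|P_N'(\lambda_j)|$ from a product identity, match the constants by normalization, and conclude $RJR=J$ by Favard uniqueness. The one difference is that the paper gets the product identity from the residue of the corner entry $((zI-J)^{-1})_{0,N-1}=a_1\cdots a_{N-1}/P_N(z)$ via Cramer's rule, rather than from the explicit eigenvector and Christoffel--Darboux. There is one arithmetic slip in your version. Your own formulas give $\tilde w_j=P_{N-1}(\lambda_j)/P_N'(\lambda_j)$, hence $w_j\tilde w_j=(a_1\cdots a_{N-1})^2/P_N'(\lambda_j)^2$ rather than $1/P_N'(\lambda_j)^2$, and so $\tilde w_j=(a_1\cdots a_{N-1})^2/(c\,|P_N'(\lambda_j)|)$. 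Since this extra factor does not depend on $j$, the proportionality and the rest of your argument are unaffected.
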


\begin{proof}[Proof of \Cref{lem:persym-criterion}]\refstepcounter{prooflink}\label{proof:lem:persym-criterion}
Write the Jacobi matrix in the form
\[
J=
\begin{pmatrix}
b_0&a_1&0&\cdots&0\\
a_1&b_1&a_2&\ddots&\vdots\\
0&a_2&b_2&\ddots&0\\
\vdots&\ddots&\ddots&\ddots&a_{N-1}\\
0&\cdots&0&a_{N-1}&b_{N-1}
\end{pmatrix},
\;
a_s>0.
\]
For \(N=1\), the conclusion is immediate.  Assume \(N\ge2\).

Let \(v^{(j)}\) be a real normalized eigenvector of \(J\) associated with
\(\lambda_j\).  Since \(J\) is a Jacobi matrix with positive off-diagonal
entries, \(e_0\) is a cyclic vector.  Hence every eigenvalue is simple and
\(\langle e_0,v^{(j)}\rangle\ne0\).
The first spectral weight is
\(w_j=\abs{\langle e_0,v^{(j)}\rangle}^2\).
Define the last spectral weights by
\(\widetilde w_j:=\abs{\langle e_{N-1},v^{(j)}\rangle}^2\).

We first prove the product identity
\(w_j\widetilde w_j = \frac{(a_1a_2\cdots a_{N-1})^2}{P_N'(\lambda_j)^2}\).
Consider the resolvent
\((zI-J)^{-1}\).
By the spectral decomposition,
\((zI-J)^{-1} = \sum_{j=1}^N\frac{v^{(j)}(v^{(j)})^\top}{z-\lambda_j}\).
Therefore the residue of the \((0,N-1)\)-entry at \(z=\lambda_j\) is
\(\langle e_0,v^{(j)}\rangle\langle e_{N-1},v^{(j)}\rangle\).
On the other hand, Cramer's rule gives
\[
\bigl((zI-J)^{-1}\bigr)_{0,N-1}
=
\frac{(-1)^{0+N-1}\det M_{N-1,0}(z)}{\det(zI-J)},
\]
where \(M_{N-1,0}(z)\) is the minor obtained from \(zI-J\) by deleting row
\(N-1\) and column \(0\).  Because \(zI-J\) is tridiagonal, this minor is
triangular after the deletion, and its only nonzero full diagonal product is
\((-a_1)(-a_2)\cdots(-a_{N-1})\).
Thus
\((-1)^{N-1}\det M_{N-1,0}(z)=a_1a_2\cdots a_{N-1}\).
Since
\(\det(zI-J)=P_N(z)\),
we get
\(\bigl((zI-J)^{-1}\bigr)_{0,N-1} = \frac{a_1a_2\cdots a_{N-1}}{P_N(z)}\).
Taking the residue at the simple pole \(z=\lambda_j\) yields
\[
\langle e_0,v^{(j)}\rangle\langle e_{N-1},v^{(j)}\rangle
=
\frac{a_1a_2\cdots a_{N-1}}{P_N'(\lambda_j)}.
\]
Squaring gives the product identity.

Now use the special form of the first weights.  Since
\(w_j=\frac{c}{|P_N'(\lambda_j)|}\),
the product identity implies
\[
\widetilde w_j
=
\frac{(a_1\cdots a_{N-1})^2}{c}\,
\frac1{|P_N'(\lambda_j)|}.
\]
Thus there is a constant \(\widetilde c>0\), independent of \(j\), such that
\(\widetilde w_j=\frac{\widetilde c}{|P_N'(\lambda_j)|}\).
Both \((w_j)_{j=1}^N\) and \((\widetilde w_j)_{j=1}^N\) are probability
vectors, because they are the squared coordinates of normalized eigenvectors
against a fixed unit vector:
\(\sum_{j=1}^N w_j=1, \; \sum_{j=1}^N \widetilde w_j=1\).
Therefore
\[
c\sum_{j=1}^N\frac1{|P_N'(\lambda_j)|}
=
\widetilde c\sum_{j=1}^N\frac1{|P_N'(\lambda_j)|},
\]
and hence \(c=\widetilde c\).  Consequently
\(\widetilde w_j=w_j, \; j=1,\ldots,N\).

The first spectral measure of the reversed matrix \(RJR\) is precisely
\(\sum_{j=1}^N\widetilde w_j\delta_{\lambda_j}\).
Indeed, if \(Jv^{(j)}=\lambda_jv^{(j)}\), then
\((RJR)(Rv^{(j)})=\lambda_j(Rv^{(j)})\),
and the first coordinate of \(Rv^{(j)}\) is the last coordinate of
\(v^{(j)}\).  Hence \(J\) and \(RJR\) have the same eigenvalues and the same
first spectral weights.

It remains only to recall why this spectral data determines the Jacobi matrix.
Given distinct points \(\lambda_j\) and positive weights \(w_j\), define an
inner product on polynomials of degree at most \(N-1\) by
\(\langle p,q\rangle_\rho = \sum_{j=1}^N w_jp(\lambda_j)q(\lambda_j)\).
Applying Gram--Schmidt to
\(1,z,z^2,\ldots,z^{N-1}\)
produces a unique sequence of orthonormal polynomials with positive leading
coefficients.  In this orthonormal basis, multiplication by \(z\) is represented
by a unique Jacobi matrix with positive off-diagonal coefficients.  Its first
spectral measure is exactly \(\rho\).  Therefore two Jacobi matrices with the
same distinct support points and the same positive first weights must coincide.

Applying this uniqueness to \(J\) and \(RJR\) gives
\(RJR=J\).
The coefficient identities follow by comparing the diagonal and off-diagonal
entries of \(J\) with those of \(RJR\):
\(b_s=b_{N-1-s}, \; s=0,\ldots,N-1\),
and
\(a_s=a_{N-s}, \; s=1,\ldots,N-1\).
\end{proof}

\begin{lemma}[Symmetry and final polynomial of the sine-Jacobi matrix]\label{lem:sine-symmetry}
For the sine weights, the Jacobi matrix \(J_N^{\sin}\) satisfies
\(b_s=0, \; s=0,\ldots,N-1\),
and
\(a_s=a_{N-s}, \; s=1,\ldots,N-1\).
Its characteristic polynomial is
\(P_N(z)=\det(zI-J_N^{\sin})=2^{1-N}T_N(z)\).
\end{lemma}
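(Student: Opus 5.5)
The plan has three parts: get persymmetry from \Cref{lem:persym-criterion}, get the vanishing diagonal from the reflection symmetry \(\zeta\mapsto-\zeta\) of the sine measure, and read off the characteristic polynomial from the fixed spectrum.

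\emph{Step 1 (persymmetry).} The nodes \(\zeta_j=\cos\theta_j\) are the \(N\) distinct zeros of \(T_N\). Hence \(\prod_j(z-\zeta_j)=2^{1-N}T_N(z)\), and differentiating gives \(P_N'(\zeta_j)=2^{1-N}N\,U_{N-1}(\zeta_j)\). Since \(U_{N-1}(\cos\theta_j)=\sin(N\theta_j)/\sin\theta_j\) and \(|\sin(N\theta_j)|=1\), we get \(1/|P_N'(\zeta_j)|=2^{N-1}\sin\theta_j/N\). So the sine weights \(w_j\propto\sin\theta_j\) are exactly of the form \(c/|P_N'(\zeta_j)|\). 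Applying \Cref{lem:persym-criterion} then yields \(a_s=a_{N-s}\) for \(1\le s\le N-1\) (and also \(b_s=b_{N-1-s}\)).

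\emph{Step 2 (zero diagonal).} Use the symmetry of the measure under \(\zeta\mapsto-\zeta\). The reflection \(j\mapsto N+1-j\) sends \(\theta_j\) to \(\pi-\theta_j\). It therefore maps \(\zeta_j\) to \(-\zeta_j\) and leaves \(\sin\theta_j\) unchanged, so \(\rho_N^{\sin}\) is invariant under \(x\mapsto-x\). For a symmetric measure, the monic orthogonal polynomials satisfy \(P_s(-x)=(-1)^sP_s(x)\). One checks this by induction: \((-1)^sP_s(-x)\) is monic of degree \(s\) and orthogonal to lower degrees, so by Gram--Schmidt uniqueness (as recalled in the proof of \Cref{lem:persym-criterion}) it equals \(P_s\). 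Consequently \(xP_s^2\) is odd, and
\[
b_s=\frac{\langle xP_s,P_s\rangle_\rho}{\langle P_s,P_s\rangle_\rho}=0,\qquad s=0,\ldots,N-1.
\]
The denominator is nonzero because \(s\le N-1\) and the support has \(N\) points.

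\emph{Step 3 (characteristic polynomial).} By the finite Favard--Stieltjes theorem recalled at the start of this appendix, \(J_N^{\sin}\) has spectrum equal to the support of \(\rho_N^{\sin}\), namely \(\{\zeta_j\}\), with all eigenvalues simple. Since \(\det(zI-J_N^{\sin})\) is monic of degree \(N\) with exactly these roots, it equals \(2^{1-N}T_N(z)\), the monic form of \(T_N\). Alternatively, \(P_N\) is the monic polynomial of degree \(N\) orthogonal to all lower degrees in \(L^2(\rho)\), and this forces it to vanish on the support.

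There is no serious obstacle. The only delicate point is keeping the index conventions consistent: the weights are indexed by \(j=1,\ldots,N\), the matrix sites by \(0,\ldots,N-1\), and the off-diagonal entries by \(1,\ldots,N-1\). The explicit formula for \(a_{s,N}^2\) is not needed here; it would give an independent check of \(a_s=a_{N-s}\) via \(\sin(\pi(N-s)/N)=\sin(\pi s/N)\).
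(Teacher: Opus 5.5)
Your proposal is correct and follows essentially the same route as the paper: the characteristic polynomial from the spectrum being the zeros of \(T_N\), persymmetry from \Cref{lem:persym-criterion} via \(w_j\propto\sin\theta_j=c/|P_N'(\zeta_j)|\), and \(b_s=0\) from evenness of the measure together with the parity \(P_s(-z)=(-1)^sP_s(z)\) obtained from uniqueness of monic orthogonal polynomials. The only differences are cosmetic. The paper identifies \(P_N\) first. Your parity step also needs no induction, since the uniqueness argument applies directly for each \(s\).
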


\begin{proof}[Proof of \Cref{lem:sine-symmetry}]\refstepcounter{prooflink}\label{proof:lem:sine-symmetry}
The support points of the sine measure are
\[
\zeta_j=\cos\theta_j,
\;
\theta_j=\frac{(2j-1)\pi}{2N},
\;
j=1,\ldots,N.
\]
These are precisely the zeros of \(T_N\).  Since \(T_N\) has leading
coefficient \(2^{N-1}\) for \(N\ge1\), the monic polynomial with these zeros is
\(\prod_{j=1}^N(z-\zeta_j)=2^{1-N}T_N(z)\).
Thus the characteristic polynomial of the Jacobi matrix associated with the
measure \(\rho_N^{\sin}\) is
\(P_N(z)=\det(zI-J_N^{\sin})=2^{1-N}T_N(z)\),
because the spectrum of this Jacobi matrix is exactly the support of the
measure.

We next verify the hypothesis of \Cref{lem:persym-criterion}.  Differentiating
the preceding identity gives
\(P_N'(z)=2^{1-N}T_N'(z)\).
Using
\(T_N'(z)=NU_{N-1}(z)\)
and
\(U_{N-1}(\cos\theta)=\frac{\sin(N\theta)}{\sin\theta}\),
we obtain, at \(\zeta_j=\cos\theta_j\),
\(P_N'(\zeta_j) = 2^{1-N}N \frac{\sin(N\theta_j)}{\sin\theta_j}\).
Since
\(N\theta_j=\frac{(2j-1)\pi}{2}\),
we have
\(\abs{\sin(N\theta_j)}=1\).
Also \(\theta_j\in(0,\pi)\), so \(\sin\theta_j>0\).  Hence
\(\abs{P_N'(\zeta_j)} = 2^{1-N}N\frac1{\sin\theta_j}\).
The sine weights satisfy
\(w_j = \frac{\sin\theta_j}{\sum_{k=1}^N\sin\theta_k}\).
Therefore
\(w_j = \frac{c}{\abs{P_N'(\zeta_j)}}\)
for a positive normalizing constant \(c\).  By
\Cref{lem:persym-criterion}, the Jacobi matrix \(J_N^{\sin}\) is
persymmetric.  Consequently
\(a_s=a_{N-s}, \; s=1,\ldots,N-1\).
The same criterion also gives \(b_s=b_{N-1-s}\), although the stronger
identity \(b_s=0\) will now be proved directly from the evenness of the
measure.

The sine measure is even.  Indeed,
\(\theta_{N+1-j} = \frac{(2(N+1-j)-1)\pi}{2N} = \pi-\theta_j\),
and therefore
\[
\zeta_{N+1-j}
=
\cos(\pi-\theta_j)
=
-\zeta_j,
\;
\sin\theta_{N+1-j}=\sin\theta_j.
\]
Hence
\(w_{N+1-j}=w_j\).
Equivalently, for every polynomial \(F\),
\(\sum_{j=1}^N w_jF(\zeta_j) = \sum_{j=1}^N w_jF(-\zeta_j)\).

Let \(P_s\) be the monic orthogonal polynomial of degree \(s\) for this
measure, \(0\le s\le N\).  We prove its parity.  Define
\(Q_s(z):=(-1)^sP_s(-z)\).
Then \(Q_s\) is also monic of degree \(s\).  If \(q\) is any polynomial of
degree \(<s\), then by the evenness of the measure,
\[
\langle Q_s,q\rangle_{\sin,N}
=
(-1)^s\langle P_s(-z),q(z)\rangle_{\sin,N}
=
(-1)^s\langle P_s(z),q(-z)\rangle_{\sin,N}.
\]
Since \(q(-z)\) has degree \(<s\), the last inner product is zero by the
orthogonality of \(P_s\).  Therefore \(Q_s\) is a monic degree-\(s\)
polynomial orthogonal to all lower-degree polynomials.  By uniqueness of the
monic orthogonal polynomial of degree \(s\), we have
\(Q_s=P_s\).
Thus
\(P_s(-z)=(-1)^sP_s(z)\).

For \(s=0,\ldots,N-1\), the diagonal Jacobi coefficient is
\[
b_s=
\frac{\langle zP_s,P_s\rangle_{\sin,N}}
     {\langle P_s,P_s\rangle_{\sin,N}}.
\]
The denominator is positive.  The numerator is zero because
\(zP_s(z)^2\)
is an odd polynomial and the measure is even.  Hence
\(b_s=0, \; s=0,\ldots,N-1\).
This proves all stated identities.
\end{proof}

\begin{lemma}[Positivity of the normalization at \(\eta\)]\label{lem:positive-eta-normalization}
Let \(J\) be an \(N\times N\) Jacobi matrix with spectrum contained in
\([-1,1]\).  Let \(J^{(s)}\) denote its \(s\times s\) leading principal
submatrix, and define the monic polynomials
\(P_0(z):=1, \; P_s(z):=\det(zI_s-J^{(s)}), \; s=1,\ldots,N\).
Then
\(P_s(\eta)>0, \; s=0,\ldots,N, \; \eta>1\).
In particular, the normalizing factors \(P_s(\eta)\) of the sine-Jacobi
method are strictly positive.
\end{lemma}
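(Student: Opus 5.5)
The plan is to use the eigenvalue interlacing property of Jacobi matrices. The leading principal submatrix \(J^{(s)}\) is itself a symmetric tridiagonal matrix. By Cauchy interlacing for principal submatrices of the real symmetric matrix \(J\), its eigenvalues lie in \([\lambda_{\min}(J),\lambda_{\max}(J)]\). The hypothesis \(\spec(J)\subset[-1,1]\) then places every zero of \(P_s\) in \([-1,1]\). Even without the full Cauchy theorem, the Rayleigh-quotient characterization suffices: for \(v\in\R^s\) padded by zeros to \(\tilde v\in\R^N\), we have \(\langle J^{(s)}v,v\rangle=\langle J\tilde v,\tilde v\rangle\in[-\|v\|^2,\|v\|^2]\), so \(\spec(J^{(s)})\subset[-1,1]\).

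Next, since \(P_s\) is monic of degree \(s\), we factor it as \(P_s(z)=\prod_{i=1}^s(z-\mu_i^{(s)})\) with all \(\mu_i^{(s)}\in[-1,1]\) real. Evaluating at \(\eta>1\), each factor satisfies \(\eta-\mu_i^{(s)}\ge\eta-1>0\). Hence \(P_s(\eta)\ge(\eta-1)^s>0\) for \(1\le s\le N\), and the case \(s=0\) is trivial since \(P_0=1\). Note that realness of the zeros is automatic from symmetry of \(J^{(s)}\), so no appeal to positivity of the off-diagonal entries is needed for this part.

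Finally, we specialize to the sine-Jacobi chain. By \Cref{lem:sine-symmetry}, \(\spec(J_N^{\sin})\) is the set of zeros of \(T_N\), which lies in \([-1,1]\). The normalizers in \Cref{thm:jacobi-method} are exactly \(P_s(\eta)\), and the \(p_s\) of the algorithm coincide with \(2^s\)-free versions of these, obtained from the same recurrence with \(b_s=0\) and \(p_1=\eta=P_1(\eta)\), so \(p_s=P_s(\eta)\). Their positivity follows.

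There is no real obstacle. The only point requiring care is justifying that the zeros of the leading principal characteristic polynomials lie in the spectral hull of \(J\), and the Rayleigh-quotient argument above settles that directly.
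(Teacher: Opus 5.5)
Your proposal is correct and follows the paper's proof: Cauchy interlacing (or your equivalent Rayleigh-quotient bound on zero-padded vectors) places the zeros of each \(P_s\) in \([-1,1]\), and the monic factorization then gives \(P_s(\eta)=\prod_i(\eta-\xi_i^{(s)})>0\). Your quantitative bound \(P_s(\eta)\ge(\eta-1)^s\) and your explicit check that the algorithm's \(p_s\) equal \(P_s(\eta)\) are small additions that the paper leaves implicit.
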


\begin{proof}[Proof of \Cref{lem:positive-eta-normalization}]\refstepcounter{prooflink}\label{proof:lem:positive-eta-normalization}
For \(s=0\), the claim is immediate from
\(P_0(\eta)=1\).
Fix \(1\le s\le N\).  The polynomial
\(P_s(z)=\det(zI_s-J^{(s)})\)
is monic of degree \(s\), and its zeros are exactly the eigenvalues of the
self-adjoint matrix \(J^{(s)}\).  Denote these eigenvalues by
\(\xi_1^{(s)},\ldots,\xi_s^{(s)}\).
By Cauchy's interlacing theorem for Hermitian principal submatrices,
\(\min\spec(J)\le \xi_i^{(s)}\le \max\spec(J), \; i=1,\ldots,s\).
Since
\(\spec(J)\subset[-1,1]\),
we have
\(\xi_i^{(s)}\in[-1,1], \; i=1,\ldots,s\).
Therefore, for every \(\eta>1\),
\(\eta-\xi_i^{(s)}>0, \; i=1,\ldots,s\).
Using the factorization of the monic characteristic polynomial,
\(P_s(\eta) = \prod_{i=1}^s\bigl(\eta-\xi_i^{(s)}\bigr)\),
we obtain
\(P_s(\eta)>0\).
This proves the assertion for all \(s=0,\ldots,N\).
\end{proof}

\begin{proof}[Proof of \Cref{thm:sine-lattice-package}]\refstepcounter{prooflink}\label{proof:thm:sine-lattice-package}
For \(N=1\) all assertions are immediate: the recurrence and asymptotic statements are vacuous and the maximum principle says \(|P_0|\le P_0(1)=1\).  Assume henceforth \(N\ge2\).

The identities in (i) follow directly from \Cref{lem:sine-symmetry}.

We prove (ii).  Set
\(h:=\frac{\pi}{N}\).
By \Cref{lem:root-unity-rogers}, the monic orthogonal polynomials of the
measure \(\rho_N^{\sin}\) satisfy
\[
P_{s+1}^{(N)}(x)
=
xP_s^{(N)}(x)
-
a_{s,N}^2P_{s-1}^{(N)}(x),
\; 1\le s\le N-1,
\]
with
\[
a_{s,N}^{2}
=
\frac14
\frac{\sin^2(sh)}
{\sin((s-\frac12)h)\sin((s+\frac12)h)}.
\]
Since \(h=\pi/N\), this is exactly
\[
a_{s,N}^{2}
=
\frac14\frac{\sin^2(\pi s/N)}
{\sin(\pi(s-\frac12)/N)\sin(\pi(s+\frac12)/N)}.
\]
This proves (ii).

We next prove (iii).  Write
\(t:=\frac{s}{N}\).
Using
\(\sin(A-B)\sin(A+B)=\sin^2 A-\sin^2 B\)
with
\(A=\frac{\pi s}{N}, \; B=\frac{\pi}{2N}\),
the formula in (ii) becomes
\[
a_{s,N}^2
=
\frac14
\frac{\sin^2(\pi t)}
{\sin^2(\pi t)-\sin^2(\pi/(2N))}.
\]
Let \(K\subset(0,1)\) be compact.  Then there is a constant \(m_K>0\) such
that
\(\sin^2(\pi t)\ge m_K \; \text{for all }t\in K\).
Moreover,
\(\sin^2\frac{\pi}{2N} = \frac{\pi^2}{4N^2}+O(N^{-4})\).
Uniformly for \(t\in K\),
\(\frac{\sin^2(\pi/(2N))}{\sin^2(\pi t)} = O_K(N^{-2})\).
Therefore, using \((1-u)^{-1}=1+u+O(u^2)\) uniformly for
\(|u|\le C_KN^{-2}\),
\[
\begin{aligned}
a_{s,N}^2
&=
\frac14
\left(
1-
\frac{\sin^2(\pi/(2N))}{\sin^2(\pi t)}
\right)^{-1} \\
&=
\frac14
\left(
1+
\frac{\pi^2}{4N^2\sin^2(\pi t)}
+
O_K(N^{-4})
\right) \\
&=
\frac14+
\frac1{N^2}\frac{\pi^2}{16\sin^2(\pi t)}
+
O_K(N^{-4}).
\end{aligned}
\]
This proves both the stated asymptotic expansion and the displayed limit.

It remains to prove (iv).  This is the only place where the positivity part of \Cref{lem:root-unity-rogers} is used.  By that lemma, for
\(0\le s\le N-1\), the endpoint-normalized polynomial has the positive
trigonometric expansion
\[
\frac{P_s^{(N)}(\cos\theta)}{P_s^{(N)}(1)}
=
\sum_{k=0}^s c_{s,k}^{(N)}\cos((s-2k)\theta),
\]
where
\(c_{s,k}^{(N)}\ge0, \; \sum_{k=0}^s c_{s,k}^{(N)}=1\).
Hence, for every real \(\theta\),
\[
\left|
\frac{P_s^{(N)}(\cos\theta)}{P_s^{(N)}(1)}
\right|
\le
\sum_{k=0}^s c_{s,k}^{(N)}
\abs{\cos((s-2k)\theta)}
\le
1.
\]
Since every \(x\in[-1,1]\) can be written as \(x=\cos\theta\), this proves
\(|P_s^{(N)}(x)|\le P_s^{(N)}(1), \; -1\le x\le1\).

We now prove the endpoint identity for the two-variable sum.  By
\Cref{lem:sine-symmetry}, \(J_N^{\sin}\) is persymmetric.  Therefore the right
tail block on the sites \(s+1,\ldots,N-1\) is unitarily identified, by reversal
of order, with the leading principal block of size \(N-s-1\).  Consequently
\(Q_{N-s-1}^{(s+1)}(x)=P_{N-s-1}^{(N)}(x), \; 0\le s\le N-1\).
Applying the already proved maximum principle to \(P_{N-s-1}^{(N)}\), we get
\(|Q_{N-s-1}^{(s+1)}(x)| \le Q_{N-s-1}^{(s+1)}(1), \; -1\le x\le1\).
Together with
\(|P_s^{(N)}(y)|\le P_s^{(N)}(1), \; -1\le y\le1\),
this gives
\[
\sum_{s=0}^{N-1}|P_s^{(N)}(y)Q_{N-s-1}^{(s+1)}(x)|^2
\le
\sum_{s=0}^{N-1}
P_s^{(N)}(1)^2Q_{N-s-1}^{(s+1)}(1)^2.
\]
The reverse inequality follows by evaluating the left-hand side at
\(x=1, \; y=1\).
Thus the supremum equals the stated endpoint value.
\end{proof}

\subsection{Endpoint profile, continuum constant, and upper bound}

\begin{proof}[Proof of \Cref{thm:sine-endpoint-profile}]\refstepcounter{prooflink}\label{proof:thm:sine-endpoint-profile}
Let
\(P_s(z)=P_s^{(N)}(z), \; s=0,\ldots,N\),
be the monic leading polynomials of \(J_N^{\sin}\).  For \(0\le s\le N-1\),
let \(Q_{N-s-1}^{(s+1)}\) denote the monic characteristic polynomial of the
right tail block on the sites \(s+1,\ldots,N-1\).

The diagonal cofactor formula for the resolvent gives, for \(z\notin\spec(J_N^{\sin})\),
\[
\bigl[(zI-J_N^{\sin})^{-1}\bigr]_{ss}
=
\frac{P_s(z)Q_{N-s-1}^{(s+1)}(z)}{P_N(z)}.
\]
Indeed, deleting the \(s\)-th row and the \(s\)-th column of the tridiagonal
matrix \(zI-J_N^{\sin}\) splits the minor into the product of the left leading
block and the right tail block.  The spectrum of \(J_N^{\sin}\) consists of the zeros of \(T_N\), all of which
belong to \((-1,1)\).  Hence \(1\notin\spec(J_N^{\sin})\).  Therefore
\[
d_s=
\bigl[(I-J_N^{\sin})^{-1}\bigr]_{ss}
=
\frac{P_s(1)Q_{N-s-1}^{(s+1)}(1)}{P_N(1)}.
\]

By \Cref{lem:sine-symmetry}, \(J_N^{\sin}\) is persymmetric.  Hence the right
tail block on sites \(s+1,\ldots,N-1\) is the reversal of the leading principal
block of size \(N-s-1\), and so
\(Q_{N-s-1}^{(s+1)}(z)=P_{N-s-1}(z)\).
Also, by the same lemma,
\(P_N(z)=2^{1-N}T_N(z)\),
and therefore
\(P_N(1)=2^{1-N}\).
Define
\(F_s^{(N)}:=2^sP_s(1)\).
Then
\(d_s = \frac{P_s(1)P_{N-s-1}(1)}{2^{1-N}} = F_s^{(N)}F_{N-s-1}^{(N)}\).

Set
\[
G_N(t):=
\frac{
F_{\lfloor tN\rfloor}^{(N)}
F_{N-\lfloor tN\rfloor-1}^{(N)}
}{C_N^2N},
\; 0\le t<1,
\]
where \(C_N\) is the normalizing sequence from \Cref{lem:sine-endpoint-asymptotic}.  That lemma gives
\[
G_N\to g \quad\text{in }L^2(0,1), \; g(t):=f(t)f(1-t),
\]
and in particular
\[
\frac1N\sum_{s=0}^{N-1}
\frac{F_s^{(N)}F_{N-s-1}^{(N)}}{C_N^2N}
\to
\int_0^1g(t)\,dt
=
I_{\sin}.
\]

We now determine the asymptotic value of \(C_N\).  Since
\(d_s=F_s^{(N)}F_{N-s-1}^{(N)}\),
we have
\[
\sum_{s=0}^{N-1}d_s
=
C_N^2N
\sum_{s=0}^{N-1}
\frac{F_s^{(N)}F_{N-s-1}^{(N)}}{C_N^2N}.
\]
On the other hand,
\(\sum_{s=0}^{N-1}d_s = \tr (I-J_N^{\sin})^{-1}\).
The logarithmic derivative of the characteristic polynomial gives
\(\tr (I-J_N^{\sin})^{-1} = \frac{P_N'(1)}{P_N(1)}\).
Since
\(P_N(z)=2^{1-N}T_N(z), \; T_N(1)=1, \; T_N'(1)=N^2\),
we obtain
\(\frac{P_N'(1)}{P_N(1)}=N^2\).
Therefore
\[
C_N^2
\left[
\frac1N\sum_{s=0}^{N-1}
\frac{F_s^{(N)}F_{N-s-1}^{(N)}}{C_N^2N}
\right]
=
1.
\]
Passing to the limit gives
\(C_N^2\to I_{\sin}^{-1}\).

For \(0\le t<1\),
\(D_N(t) = \frac{d_{\lfloor tN\rfloor}}{N} = C_N^2G_N(t)\).
Since \(G_N\to g\) in \(L^2(0,1)\) and \(C_N^2\to I_{\sin}^{-1}\), we get
\(D_N\to \frac{g}{I_{\sin}}=\varphi_{\sin}\)
in \(L^2(0,1)\).

Finally,
\[
\frac1{N^3}\sum_{s=0}^{N-1}d_s^2
=
\frac1N\sum_{s=0}^{N-1}\left(\frac{d_s}{N}\right)^2
=
C_N^4
\frac1N\sum_{s=0}^{N-1}
\left|
\frac{F_s^{(N)}F_{N-s-1}^{(N)}}{C_N^2N}
\right|^2.
\]
By \Cref{lem:sine-endpoint-asymptotic}, the last Riemann sum converges to
\(\int_0^1g(t)^2\,dt\).
Using \(C_N^2\to I_{\sin}^{-1}\), we obtain
\[
\frac1{N^3}\sum_{s=0}^{N-1}d_s^2
\to
\frac1{I_{\sin}^2}
\int_0^1 f(t)^2f(1-t)^2\,dt
=
\int_0^1\varphi_{\sin}(t)^2\,dt
=
c_{\sin}.
\]
\end{proof}

\begin{proof}[Proof of \Cref{thm:sine-upper}]\refstepcounter{prooflink}\label{proof:thm:sine-upper}
Final-Chebyshev exactness follows from \Cref{thm:sine-lattice-package}(i) and
\Cref{thm:jacobi-method}.

It remains to prove the finite identity for \(A_N(J_N^{\sin})\).  The endpoint
supremum identity is precisely \Cref{thm:sine-lattice-package}(iv).  The
conversion to resolvent diagonals is as follows.
By \Cref{thm:sine-lattice-package}(iv),
\[
\sup_{x,y\in[-1,1]}
\sum_{s=0}^{N-1}|P_s(y)Q_{N-s-1}^{(s+1)}(x)|^2
=
\sum_{s=0}^{N-1}P_s(1)^2Q_{N-s-1}^{(s+1)}(1)^2.
\]

We now convert the endpoint value into the resolvent-diagonal expression.
Since \(\spec(J_N^{\sin})\) consists of the zeros of \(T_N\), the point \(1\)
is not an eigenvalue of \(J_N^{\sin}\).  By Cramer's rule, the diagonal entry
of \((I-J_N^{\sin})^{-1}\) is the corresponding diagonal cofactor divided by
\(\det(I-J_N^{\sin})=P_N(1)\).  Deleting row and column \(s\) in the tridiagonal
matrix splits the minor into the leading block and the right-tail block.
Therefore, for every \(s=0,\ldots,N-1\),
\[
d_s
=
\bigl[(I-J_N^{\sin})^{-1}\bigr]_{ss}
=
\frac{P_s(1)Q_{N-s-1}^{(s+1)}(1)}{P_N(1)}.
\]
Equivalently,
\(P_s(1)Q_{N-s-1}^{(s+1)}(1)=P_N(1)d_s\).
Hence
\[
\sum_{s=0}^{N-1}P_s(1)^2Q_{N-s-1}^{(s+1)}(1)^2
=
P_N(1)^2\sum_{s=0}^{N-1}d_s^2.
\]

By \Cref{thm:jacobi-first-variation-kernel},
\[
A_N(J_N^{\sin})
=
\frac{2}{\Delta |P_N(\eta)|}
\sup_{x,y\in[-1,1]}
\left(
\sum_{s=0}^{N-1}
\abs{Q_{N-s-1}^{(s+1)}(x)P_s(y)}^2
\right)^{1/2}.
\]
Using the endpoint identity just proved, this becomes
\[
A_N(J_N^{\sin})
=
\frac{2|P_N(1)|}{\Delta |P_N(\eta)|}
\left(\sum_{s=0}^{N-1}d_s^2\right)^{1/2}.
\]
By \Cref{lem:sine-symmetry},
\(P_N(z)=2^{1-N}T_N(z)\).
Therefore
\(P_N(1)=2^{1-N}T_N(1)=2^{1-N}\),
and
\(P_N(\eta)=2^{1-N}T_N(\eta)\).
Since \(\eta>1\), one has \(T_N(\eta)>0\), and hence
\(\frac{|P_N(1)|}{|P_N(\eta)|} = \frac1{T_N(\eta)} = \epsilon_N^\star\).
Substitution gives
\[
A_N(J_N^{\sin})
=
\frac{2\epsilon_N^\star}{\Delta}
\left(\sum_{s=0}^{N-1}d_s^2\right)^{1/2}.
\]

By \Cref{thm:sine-endpoint-profile},
\(\frac1{N^3}\sum_{s=0}^{N-1}d_s^2 \longrightarrow c_{\sin}\).
Since
\(c_{\sin}=\int_0^1\varphi_{\sin}(t)^2\,dt>0\),
we may rewrite the last convergence as
\(\sum_{s=0}^{N-1}d_s^2 = c_{\sin}N^3(1+o(1))\).
Taking square roots gives
\[
\left(\sum_{s=0}^{N-1}d_s^2\right)^{1/2}
=
\sqrt{c_{\sin}}\,N^{3/2}(1+o(1)).
\]
Substituting this into the exact identity for \(A_N(J_N^{\sin})\), we obtain
\[
A_N(J_N^{\sin})
=
\frac{2\epsilon_N^\star}{\Delta}
\sqrt{c_{\sin}}\,N^{3/2}(1+o(1)).
\]
This is the stated asymptotic formula.
\end{proof}

\section{An explicit theta form of the sine constant}
\label{app:explicit-csin}

We give a completely explicit theta-series form of the constant
\[
c_{\sin}
=
\int_0^1 \phi_{\sin}(t)^2\,dt,
\;
\phi_{\sin}(t)
=
\frac{f(t)f(1-t)}
{\displaystyle\int_0^1 f(u)f(1-u)\,du},
\]
where \(f\) is the positive non-logarithmic solution of
\[
f''(t)+\frac{\pi^2}{4\sin^2(\pi t)}f(t)=0,
\;
f(t)\sim \sqrt t
\quad (t\downarrow0).
\]

We use the following notation. The complete elliptic integral of the first kind is
\[
K(k)
=
\int_0^{\pi/2}
\frac{d\alpha}{\sqrt{1-k^2\sin^2\alpha}},
\;
k'=(1-k^2)^{1/2}.
\]
The Jacobi theta constants are normalized by
\[
\vartheta_2(\tau)
=
2q^{1/4}\sum_{m=0}^{\infty}q^{m(m+1)},
\;
\vartheta_3(\tau)
=
\sum_{m\in\mathbb Z}q^{m^2},
\;
\vartheta_4(\tau)
=
\sum_{m\in\mathbb Z}(-1)^m q^{m^2},
\;
q=\mathrm{e}^{\pi i\tau}.
\]
Finally, Ramanujan's triangular theta series is
\(\psi(Q) = \sum_{m=0}^{\infty}Q^{m(m+1)/2}\).
We use standard identities for Legendre functions, elliptic integrals, and theta constants from
\cite[Chs.~14, 19, 20]{DLMF}; the theta-product identities are classical consequences of Jacobi's
triple product, see for example \cite[Ch.~21]{WhittakerWatson} and
\cite[Ch.~16]{BerndtIII}.

\begin{lemma}[Legendre reduction]
\label{lem:csin-legendre-reduction}
Up to a nonzero constant,
\[
f(t) = \sqrt{\sin(\pi t)}\,P_{-1/2}(\cos\pi t),
\]
where \(P_{-1/2}\) is the Ferrers--Legendre function of the first kind. Consequently, with
\[
G(t) = \sin(\pi t)\, P_{-1/2}(\cos\pi t)\, P_{-1/2}(-\cos\pi t),
\]
one has
\[
\phi_{\sin}(t)=\frac{G(t)}{\widehat I_{\sin}},
\;
\widehat I_{\sin}=\int_0^1G(t)\,dt,
\;
c_{\sin}=\frac{\widehat M_{\sin}}{\widehat I_{\sin}^2},
\]
where
\(\widehat M_{\sin}=\int_0^1G(t)^2\,dt\).  The hats distinguish these
unnormalized Legendre moments from the normalized main-text moments defining
\(I_{\sin}\) and \(M_{\sin}\); the quotient \(c_{\sin}\) is scale-invariant.
\end{lemma}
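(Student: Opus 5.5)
The plan is to reduce the ODE $f''+\frac{\pi^2}{4\sin^2(\pi t)}f=0$ to the Legendre equation of degree $\nu=-1/2$ by a Liouville-type substitution. I will change variables to $\alpha=\pi t$ and $x=\cos\alpha$, and write $f(t)=\sqrt{\sin\alpha}\,w(\cos\alpha)$.

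\emph{Derivative computation.} With $g(\alpha)=\sqrt{\sin\alpha}$, one has
\[
g''/g=-\tfrac14-\tfrac{1}{4}\cot^2\alpha=\tfrac14-\tfrac{1}{4\sin^2\alpha}.
\]
I will compute $\frac{d^2}{d\alpha^2}\bigl(g\,w(\cos\alpha)\bigr)$ and express the $w$-derivatives in $x$: $\frac{d}{d\alpha}=-\sin\alpha\,\frac{d}{dx}$, so $\frac{d^2}{d\alpha^2}=\sin^2\alpha\,\partial_x^2-\cos\alpha\,\partial_x$. The cross term $2g'\frac{d}{d\alpha}w$ contributes $-\cos\alpha\,\sin^{1/2}\alpha\,w'$. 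Collecting terms gives
\[
\frac{d^2 f}{d\alpha^2}
=\sqrt{\sin\alpha}\Bigl[(1-x^2)w''-2xw'+\bigl(\tfrac14-\tfrac1{4(1-x^2)}\bigr)w\Bigr].
\]
The ODE in $\alpha$ reads $f_{\alpha\alpha}+\frac{1}{4\sin^2\alpha}f=0$, since the $\pi^2$ is absorbed by $d/dt=\pi\,d/d\alpha$. Hence the singular $1/(4(1-x^2))$ terms cancel, and the equation reduces to
\[
(1-x^2)w''-2xw'+\tfrac14 w=0,
\]
which is Legendre's equation with $\nu(\nu+1)=-\tfrac14$, i.e.\ $\nu=-\tfrac12$ with order $0$. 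I will verify this bookkeeping carefully; it is the only real computation.

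\emph{Branch selection.} The solutions are $P_{-1/2}(x)$ and $Q_{-1/2}(x)$. As $t\downarrow0$ we have $x\to1$, where $P_{-1/2}(1)=1$ is regular while $Q_{-1/2}$ has a logarithmic singularity. Since $\sqrt{\sin\pi t}\sim\sqrt{\pi t}$, the choice $\sqrt{\sin\pi t}\,P_{-1/2}(\cos\pi t)$ behaves like $\sqrt{\pi}\sqrt t$, which is the non-logarithmic branch. This is positive on $(0,1)$ because $P_{-1/2}(x)=\frac{2}{\pi}K\bigl(\sqrt{(1-x)/2}\bigr)>0$ (DLMF 14.5), and it matches the normalization $f\sim\sqrt t$ up to the constant $\sqrt\pi$. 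Uniqueness of the branch follows because the indicial exponent $1/2$ is repeated and the second solution carries a $\log$.

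\emph{Consequence.} Using $\sin(\pi(1-t))=\sin\pi t$ and $\cos(\pi(1-t))=-\cos\pi t$, I get $f(t)f(1-t)=\mathrm{const}\cdot G(t)$. The constant cancels in $\phi_{\sin}=G/\widehat I_{\sin}$, and then $c_{\sin}=\int\phi_{\sin}^2=\widehat M_{\sin}/\widehat I_{\sin}^2$ by definition.

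The main obstacle is the branch identification near $t=0$: confirming that $P_{-1/2}$, not some combination involving $Q_{-1/2}$, gives exactly the $\sqrt t$ behaviour without a $\sqrt t\log t$ term. For this I will cite the standard expansion of $Q_\nu$ near $x=1$ from DLMF 14.8.
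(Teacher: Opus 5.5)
\textbf{Verdict: correct approach, same as the paper's, but the one computation you display is wrong.} The paper also writes $f=\sqrt{\sin\alpha}\,w(\cos\alpha)$ and reduces to Legendre's equation of degree $-\tfrac12$. It then selects $P_{-1/2}$ as the non-logarithmic branch at $x=1$. Your explicit positivity check via $P_{-1/2}(x)=\tfrac{2}{\pi}K\bigl(\sqrt{(1-x)/2}\bigr)$ is a small addition that the paper leaves implicit. The problem is two errors in the displayed computation that happen to cancel.

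\textbf{First error: $g''/g$.} For $g=\sqrt{\sin\alpha}$,
$g''=-\tfrac14\sin^{-3/2}\alpha\cos^2\alpha-\tfrac12\sin^{1/2}\alpha$, so
$g''/g=-\tfrac12-\tfrac14\cot^2\alpha=-\tfrac14-\tfrac{1}{4\sin^2\alpha}$.
Your chain $-\tfrac14-\tfrac14\cot^2\alpha=\tfrac14-\tfrac1{4\sin^2\alpha}$ fails at both equalities; the middle expression actually equals $-\tfrac{1}{4\sin^2\alpha}$. As a result, the bracket should contain $\bigl(-\tfrac14-\tfrac{1}{4(1-x^2)}\bigr)w$. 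After the singular terms cancel, the reduced equation is $(1-x^2)w''-2xw'-\tfrac14 w=0$, not $+\tfrac14 w$.

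\textbf{Second error: reading off $\nu$.} The equation you actually wrote, with $+\tfrac14 w$, has $\nu(\nu+1)=+\tfrac14$, i.e.\ $\nu=(-1\pm\sqrt2)/2$. $P_{-1/2}$ does not solve that equation: subtracting it from the true Legendre equation would force $\tfrac12 P_{-1/2}\equiv0$. Your conclusion survives only because you read $+\tfrac14$ as $\nu(\nu+1)=-\tfrac14$.

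\textbf{After the fix.} Your $w'$ terms are correct. With $g''/g$ corrected, your computation matches the paper's identity $y''+\tfrac{1}{4s^2}y=s^{1/2}\bigl[(1-z^2)v''-2zv'-\tfrac14 v\bigr]$. The rest of your argument then goes through as written:
\begin{itemize}
\item branch selection via the repeated indicial exponent $\tfrac12$ and the logarithm in $Q_{-1/2}$;
\item the normalization $f=\pi^{-1/2}\sqrt{\sin\pi t}\,P_{-1/2}(\cos\pi t)$;
\item the reflection $f(1-t)\propto\sqrt{\sin\pi t}\,P_{-1/2}(-\cos\pi t)$;
\item cancellation of the constant in $\phi_{\sin}$ and in $c_{\sin}=\widehat M_{\sin}/\widehat I_{\sin}^2$.
\end{itemize}
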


\begin{proof}
Put
\(x=\pi t, \; z=\cos x, \; f(t)=y(x)\).
Since \(f''(t)=\pi^2y''(x)\), the equation for \(f\) becomes
\(y''(x)+\frac{1}{4\sin^2x}y(x)=0\).
Write \(s=\sin x\) and set
\(y(x)=s^{1/2}v(z)\).
Since \(z'=-s\), a direct differentiation gives
\(y'(x) = \frac{z}{2s^{1/2}}v(z)-s^{3/2}v'(z)\),
and
\[
y''(x)
=
\left(
-\frac{s^{1/2}}2-\frac{z^2}{4s^{3/2}}
\right)v(z)
-2zs^{1/2}v'(z)
+s^{5/2}v''(z).
\]
Therefore
\[
y''(x)+\frac{1}{4s^2}y(x)
=
s^{1/2}
\left[
(1-z^2)v''(z)-2zv'(z)-\frac14v(z)
\right].
\]
Thus \(v\) satisfies
\((1-z^2)v''(z)-2zv'(z)-\frac14v(z)=0\).
This is Legendre's equation
\((1-z^2)v''(z)-2zv'(z)+\nu(\nu+1)v(z)=0\)
with
\(\nu(\nu+1)=-\frac14, \; \nu=-\frac12\).
The Ferrers function \(P_{-1/2}(z)\) is the solution regular at \(z=1\), with
\(P_{-1/2}(1)=1\). Hence
\[
s^{1/2}P_{-1/2}(\cos x)
\sim
\sqrt{x}
=
\sqrt{\pi}\sqrt{t}
\; (t\downarrow0),
\]
whereas the second independent Legendre solution has a logarithmic endpoint singularity.
Therefore the positive non-logarithmic branch is proportional to
\(\sqrt{\sin(\pi t)}\,P_{-1/2}(\cos\pi t)\).
The proportionality constant cancels from the normalized product defining
\(\phi_{\sin}\). This gives the stated formula for \(G\), \(\widehat I_{\sin}\), and
\(\widehat M_{\sin}\).
\end{proof}

\begin{lemma}[Elliptic form]
\label{lem:csin-elliptic-form}
One has
\[
\widehat I_{\sin} = \frac{16}{\pi^3} \int_0^1 kK(k)K(k')\,dk,
\;
\widehat M_{\sin} = \frac{128}{\pi^5} \int_0^1 k^2k'K(k)^2K(k')^2\,dk.
\]
\end{lemma}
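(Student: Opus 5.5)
The plan is to substitute the classical closed form of the Ferrers function $P_{-1/2}$ in terms of complete elliptic integrals into the expression for $G$ from \Cref{lem:csin-legendre-reduction}, and then change variables from $t$ to the modulus $k$.

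\textbf{Step 1: hypergeometric form.} First I would recall the hypergeometric representation of the Ferrers function of the first kind on $(-1,1]$:
\[
P_\nu(z)={}_2F_1\!\left(-\nu,\nu+1;1;\tfrac{1-z}{2}\right)
\]
(\cite[14.3.1]{DLMF}). For $\nu=-\frac12$ this becomes ${}_2F_1(\frac12,\frac12;1;\frac{1-z}{2})$. Combined with $K(k)=\frac{\pi}{2}\,{}_2F_1(\frac12,\frac12;1;k^2)$ (\cite[19.5.1]{DLMF}), this gives
\[
P_{-1/2}(z)=\frac{2}{\pi}K\!\left(\sqrt{(1-z)/2}\right).
\]

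\textbf{Step 2: evaluate $G$.} Take $z=\pm\cos\pi t$ with $t\in(0,1)$. Then $(1-\cos\pi t)/2=\sin^2(\pi t/2)$ and $(1+\cos\pi t)/2=\cos^2(\pi t/2)$. So with $k:=\sin(\pi t/2)$ and $k'=\cos(\pi t/2)$,
\[
P_{-1/2}(\cos\pi t)=\tfrac{2}{\pi}K(k),\qquad P_{-1/2}(-\cos\pi t)=\tfrac{2}{\pi}K(k').
\]
Using $\sin(\pi t)=2kk'$, this yields
\[
G(t)=\frac{8}{\pi^2}\,kk'\,K(k)K(k').
\]

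\textbf{Step 3: change variables.} The map $t\mapsto k=\sin(\pi t/2)$ is a smooth increasing bijection from $(0,1)$ onto $(0,1)$, with $dk=\frac{\pi}{2}k'\,dt$, i.e. $dt=\frac{2}{\pi k'}\,dk$. Substituting,
\[
\widehat I_{\sin}=\int_0^1\frac{8}{\pi^2}kk'KK'\cdot\frac{2}{\pi k'}\,dk=\frac{16}{\pi^3}\int_0^1 kK(k)K(k')\,dk,
\]
\[
\widehat M_{\sin}=\int_0^1\frac{64}{\pi^4}k^2k'^2K^2K'^2\cdot\frac{2}{\pi k'}\,dk=\frac{128}{\pi^5}\int_0^1 k^2k'K(k)^2K(k')^2\,dk.
\]

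\textbf{Step 4: integrability.} Finally I would confirm that these are absolutely convergent improper integrals, so the substitution is legitimate. The only singularities are logarithmic, since $K(k')\sim\log(4/k)$ as $k\downarrow0$ and symmetrically at $k=1$. In the second integrand, the factor $1/k'$ introduced by the Jacobian is already cancelled in the display above, leaving $k'K(k)^2$, which has only a $\log^2$ singularity at $k=1$ and is therefore integrable.

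The only genuinely non-routine input is Step 1, namely the identification of $P_{-1/2}$ with $\frac{2}{\pi}K$. Everything after that is an explicit substitution. The point that needs care in Step 1 is using the Ferrers (on-the-cut) normalization, so that $P_{-1/2}(1)=1$ matches $K(0)=\pi/2$. This is consistent with the normalization already fixed in \Cref{lem:csin-legendre-reduction}.
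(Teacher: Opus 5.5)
Your proposal is correct and follows essentially the same route as the paper: identify $P_{-1/2}(1-2k^2)=\frac{2}{\pi}K(k)$ through the common ${}_2F_1(\frac12,\frac12;1;k^2)$ representation, then substitute $k=\sin(\pi t/2)$ with $\sin\pi t=2kk'$ and $dt=\frac{2}{\pi k'}\,dk$. Your Step~4 integrability check is harmless but not needed, since the integrands are nonnegative and the substitution is a monotone $C^1$ bijection of $(0,1)$.
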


\begin{proof}
The hypergeometric representation of the Ferrers function gives
\[
P_{-1/2}(1-2k^2)
=
{}_2F_1\!\left(\frac12,\frac12;1;k^2\right),
\]
and the standard hypergeometric representation of the complete elliptic integral gives
\[
K(k)
=
\frac{\pi}{2}
{}_2F_1\!\left(\frac12,\frac12;1;k^2\right).
\]
Hence
\(P_{-1/2}(1-2k^2)=\frac{2}{\pi}K(k)\).
Set
\(k=\sin\frac{\pi t}{2}, \; k'=\cos\frac{\pi t}{2}\).
Then
\(\cos\pi t=1-2k^2, \; -\cos\pi t=1-2k'^2, \; \sin\pi t=2kk'\),
and
\(dt=\frac{2}{\pi k'}\,dk\).
Therefore
\[
G(t)
=
\sin(\pi t)\,
P_{-1/2}(\cos\pi t)\,
P_{-1/2}(-\cos\pi t)
=
\frac{8}{\pi^2}kk'K(k)K(k').
\]
Thus
\(\widehat I_{\sin} = \int_0^1G(t)\,dt = \frac{16}{\pi^3} \int_0^1 kK(k)K(k')\,dk\).
Similarly,
\[
\widehat M_{\sin}
=
\int_0^1G(t)^2\,dt
=
\frac{128}{\pi^5}
\int_0^1 k^2k'K(k)^2K(k')^2\,dk.
\]
\end{proof}

\begin{lemma}[Theta form]
\label{lem:csin-theta-form}
The two elliptic moments in Lemma~\ref{lem:csin-elliptic-form} satisfy
\[
\widehat I_{\sin} = 2\int_0^\infty r\,\vartheta_2(ir)^4\vartheta_4(ir)^4\,dr,
\;
\widehat M_{\sin} = 4\int_0^\infty r^2\,\vartheta_2(ir)^6\vartheta_4(ir)^6\,dr.
\]
Equivalently, by modular folding,
\[
\widehat I_{\sin} = 4\int_1^\infty r\,\vartheta_2(ir)^4\vartheta_4(ir)^4\,dr,
\;
\widehat M_{\sin} = 8\int_1^\infty r^2\,\vartheta_2(ir)^6\vartheta_4(ir)^6\,dr.
\]
\end{lemma}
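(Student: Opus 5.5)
The plan is to pass from the elliptic form in Lemma~\ref{lem:csin-elliptic-form} to theta constants by parametrizing the modulus by the nome on the imaginary axis. For \(k\in(0,1)\) I will set
\[
\tau=ir,\qquad r=\frac{K(k')}{K(k)}\in(0,\infty),
\]
and use the classical inversion formulas from \cite[Ch.~20]{DLMF}:
\[
k=\frac{\vartheta_2(ir)^2}{\vartheta_3(ir)^2},\qquad
k'=\frac{\vartheta_4(ir)^2}{\vartheta_3(ir)^2},\qquad
K(k)=\frac{\pi}{2}\vartheta_3(ir)^2,\qquad K(k')=rK(k).
\]
The map \(k\mapsto r\) is a smooth decreasing bijection \((0,1)\to(0,\infty)\), with \(r\to\infty\) as \(k\downarrow0\) and \(r\to0\) as \(k\uparrow1\).

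The key input is the Jacobian. I will use the Legendre-relation consequence
\[
\frac{d}{dk}\frac{K(k')}{K(k)}=-\frac{\pi}{2kk'^2K(k)^2},
\]
equivalently \(dk=-\frac{2}{\pi}kk'^2K(k)^2\,dr\). Substituting into the first moment gives
\[
kK(k)K(k')\,dk=\frac{2}{\pi}\,r\,k^2k'^2K(k)^4\,dr .
\]
By the inversion formulas, \(k^2k'^2K(k)^4=\frac{\pi^4}{16}\vartheta_2^4\vartheta_4^4\), because the \(\vartheta_3\) powers cancel. Reversing the orientation and multiplying by \(16/\pi^3\) yields
\[
\widehat I_{\sin}=2\int_0^\infty r\,\vartheta_2(ir)^4\vartheta_4(ir)^4\,dr .
\]
The second moment is handled identically:
\[
k^2k'K(k)^2K(k')^2\,dk=\frac{2}{\pi}\,r^2k^3k'^3K(k)^6\,dr
=\frac{\pi^5}{32}\,r^2\vartheta_2^6\vartheta_4^6\,dr,
\]
and multiplying by \(128/\pi^5\) gives the factor \(4\).

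For the folded form, I will split \(\int_0^\infty=\int_0^1+\int_1^\infty\) and substitute \(r=1/u\) on \((0,1)\). This uses the modular transformations
\[
\vartheta_2(i/u)=\sqrt u\,\vartheta_4(iu),\qquad \vartheta_4(i/u)=\sqrt u\,\vartheta_2(iu).
\]
For \(\widehat I_{\sin}\), the integrand transforms as \(u^{-1}\cdot u^{4}\cdot u^{-2}=u\). For \(\widehat M_{\sin}\), it transforms as \(u^{-2}\cdot u^{6}\cdot u^{-2}=u^2\). So each \((0,1)\) piece equals the corresponding \((1,\infty)\) piece, which doubles the constants to \(4\) and \(8\).

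The main obstacle is bookkeeping rather than ideas. I must check the normalizations of \(\vartheta_j\) with \(q=e^{\pi i\tau}\) against the inversion formulas, and confirm the sign and constant in \(dr/dk\). The constant comes from differentiating \(K'/K\), using \(dK/dk=\frac{E-k'^2K}{kk'^2}\) together with Legendre's relation \(EK'+E'K-KK'=\pi/2\).

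I will also justify convergence and the change of variables at both ends. As \(r\to\infty\), \(\vartheta_2\) decays like \(e^{-\pi r/4}\). As \(r\to0\), the decay follows by the same modular transformation. These endpoint bounds make every integral absolutely convergent, which legitimizes the substitution.
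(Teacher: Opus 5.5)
Your proposal is correct and follows the paper's proof essentially step for step. You use the same substitution \(r=K(k')/K(k)\) with the Legendre-relation Jacobian \(dr/dk=-\pi/(2kk'^2K(k)^2)\), the same theta parametrization in which the \(\vartheta_3\) powers cancel, and the same folding via \(\vartheta_2(i/r)=\sqrt r\,\vartheta_4(ir)\) and \(\vartheta_4(i/r)=\sqrt r\,\vartheta_2(ir)\); your endpoint-decay remarks justifying absolute convergence are a small addition that the paper leaves implicit.
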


\begin{proof}
Introduce the elliptic modular parameter
\(r=\frac{K(k')}{K(k)}\).
Write
\(K=K(k), \; K_c=K(k'), \; E=E(k), \; E_c=E(k')\).
The standard derivative formula for \(K\) gives
\(\frac{dK}{dk} = \frac{E}{kk'^2}-\frac{K}{k}\).
Since \(dk'/dk=-k/k'\), the same formula applied to \(K(k')\) gives
\(\frac{dK_c}{dk} = -\frac{E_c}{kk'^2}+\frac{kK_c}{k'^2}\).
Therefore
\[
\frac{dr}{dk}
=
\frac{K\,dK_c/dk-K_c\,dK/dk}{K^2}
=
\frac{-KE_c-K_cE+KK_c}{kk'^2K^2}.
\]
By Legendre's relation
\(KE_c+K_cE-KK_c=\frac{\pi}{2}\),
we obtain
\(\frac{dr}{dk} = -\frac{\pi}{2kk'^2K(k)^2}\).
Thus, as \(k\) increases from \(0\) to \(1\), \(r\) decreases from \(+\infty\) to \(0\), and
\(dk = -\frac{2kk'^2K(k)^2}{\pi}\,dr\).

Using the theta parametrization
\[
K(k)=\frac{\pi}{2}\vartheta_3(ir)^2,
\;
k=\frac{\vartheta_2(ir)^2}{\vartheta_3(ir)^2},
\;
k'=\frac{\vartheta_4(ir)^2}{\vartheta_3(ir)^2},
\]
we transform \(\widehat I_{\sin}\). From Lemma~\ref{lem:csin-elliptic-form},
\(\widehat I_{\sin} = \frac{16}{\pi^3} \int_0^1 kK(k)K(k')\,dk\).
Since \(K(k')=rK(k)\), this becomes
\[
\widehat I_{\sin}
=
\frac{16}{\pi^3}
\int_\infty^0
k\,rK(k)^2
\left(
-\frac{2kk'^2K(k)^2}{\pi}
\right)\,dr.
\]
Hence
\(\widehat I_{\sin} = \frac{32}{\pi^4} \int_0^\infty r\,k^2k'^2K(k)^4\,dr\).
The theta identities imply
\(k^2k'^2K(k)^4 = \frac{\pi^4}{16} \vartheta_2(ir)^4\vartheta_4(ir)^4\).
Therefore
\(\widehat I_{\sin} = 2\int_0^\infty r\,\vartheta_2(ir)^4\vartheta_4(ir)^4\,dr\).

Similarly,
\[
\widehat M_{\sin}
=
\frac{128}{\pi^5}
\int_0^1 k^2k'K(k)^2K(k')^2\,dk
=
\frac{128}{\pi^5}
\int_0^1 k^2k'r^2K(k)^4\,dk.
\]
Substituting \(dk\) gives
\(\widehat M_{\sin} = \frac{256}{\pi^6} \int_0^\infty r^2k^3k'^3K(k)^6\,dr\).
Since
\(k^3k'^3K(k)^6 = \frac{\pi^6}{64} \vartheta_2(ir)^6\vartheta_4(ir)^6\),
we get
\(\widehat M_{\sin} = 4\int_0^\infty r^2\,\vartheta_2(ir)^6\vartheta_4(ir)^6\,dr\).

It remains to fold the integrals. The modular transformations
\[
\vartheta_2(i/r)=\sqrt r\,\vartheta_4(ir),
\;
\vartheta_4(i/r)=\sqrt r\,\vartheta_2(ir)
\]
give
\[
\vartheta_2(i/r)^p\vartheta_4(i/r)^p
=
r^p\vartheta_2(ir)^p\vartheta_4(ir)^p
\; (p=4,6).
\]
For \(p=4\),
\[
\int_0^1 r\,\vartheta_2(ir)^4\vartheta_4(ir)^4\,dr
=
\int_1^\infty r\,\vartheta_2(ir)^4\vartheta_4(ir)^4\,dr,
\]
after the change of variables \(r=1/u\). For \(p=6\),
\[
\int_0^1 r^2\,\vartheta_2(ir)^6\vartheta_4(ir)^6\,dr
=
\int_1^\infty r^2\,\vartheta_2(ir)^6\vartheta_4(ir)^6\,dr.
\]
This proves the folded formulas.
\end{proof}

\begin{lemma}[Ramanujan theta expansion]
\label{lem:csin-psi-expansion}
For \(q=\mathrm{e}^{-\pi r}\), \(r\ge1\),
\(\vartheta_2(ir)^4\vartheta_4(ir)^4 = 16q\,\psi(-q)^8\),
and
\(\vartheta_2(ir)^6\vartheta_4(ir)^6 = 64q^{3/2}\psi(-q)^{12}\).
Consequently,
\[
\widehat I_{\sin}
=
64\int_1^\infty
r \mathrm{e}^{-\pi r}\psi(-\mathrm{e}^{-\pi r})^8\,dr,
\]
and
\[
\widehat M_{\sin}
=
512\int_1^\infty
r^2 \mathrm{e}^{-3\pi r/2}\psi(-\mathrm{e}^{-\pi r})^{12}\,dr.
\]
\end{lemma}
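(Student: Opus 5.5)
The plan has two parts: derive both theta identities from the Jacobi triple product, then substitute them into the folded integrals of \Cref{lem:csin-theta-form}. Throughout, \(q=\mathrm{e}^{-\pi r}\) with \(r\ge1\), so \(0<q<1\) and every series converges absolutely.

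\textbf{The two product formulas.} Jacobi's triple product gives
\[
\vartheta_2=2q^{1/4}\prod_{n\ge1}(1-q^{2n})(1+q^{2n})^2,
\qquad
\vartheta_4=\prod_{n\ge1}(1-q^{2n})(1-q^{2n-1})^2 .
\]
Ramanujan's identity \(\psi(Q)=\frac{(Q^2;Q^2)_\infty}{(Q;Q^2)_\infty}\) is also a triple-product consequence (\cite[Ch.~16]{BerndtIII}). Applied with \(Q=-q\), it reads
\[
\psi(-q)=\frac{(q^2;q^2)_\infty}{(-q;q^2)_\infty}.
\]

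\textbf{The first identity.} Multiplying the two products gives
\[
\vartheta_2\vartheta_4
=2q^{1/4}(q^2;q^2)_\infty^2\,(-q^2;q^2)_\infty^2\,(q;q^2)_\infty^2 .
\]
The elementary identity \((q;q^2)_\infty(-q;q^2)_\infty=(q^2;q^4)_\infty\) gives \((q;q^2)_\infty=(q^2;q^4)_\infty/(-q;q^2)_\infty\). Next, the factorization \((q^2;q^2)_\infty(-q^2;q^2)_\infty=(q^4;q^4)_\infty\) holds. The product \((q^4;q^4)_\infty(q^2;q^4)_\infty\) equals \((q^2;q^2)_\infty\). Combining these three facts,
\[
\vartheta_2\vartheta_4=2q^{1/4}\left(\frac{(q^2;q^2)_\infty}{(-q;q^2)_\infty}\right)^2=2q^{1/4}\psi(-q)^2 .
\]
Raising to the 4th power gives \(16q\,\psi(-q)^8\). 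Raising to the 6th power gives \(64q^{3/2}\psi(-q)^{12}\).

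\textbf{Substitution.} Insert these into the folded formulas \(\widehat I_{\sin}=4\int_1^\infty r\,\vartheta_2^4\vartheta_4^4\,dr\) and \(\widehat M_{\sin}=8\int_1^\infty r^2\,\vartheta_2^6\vartheta_4^6\,dr\) of \Cref{lem:csin-theta-form}. The constants become \(4\cdot16=64\) and \(8\cdot64=512\), and the exponentials are \(q=\mathrm{e}^{-\pi r}\) and \(q^{3/2}=\mathrm{e}^{-3\pi r/2}\). This yields both displayed integrals.

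\textbf{Main obstacle.} The only delicate step is the bookkeeping with \(q\)-Pochhammer symbols in the product-formula step. I would check it by comparing leading coefficients: \(\vartheta_2^4\vartheta_4^4=16q(1-8q+\cdots)\), and \(\psi(-q)^8=(1-q+\cdots)^8=1-8q+\cdots\), which agree.
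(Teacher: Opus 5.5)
Your proposal is correct and follows essentially the paper's route. Both arguments use Jacobi triple product $q$-Pochhammer manipulations to establish \(\vartheta_2(ir)\vartheta_4(ir)=2q^{1/4}\psi(-q)^2\), raise it to the fourth and sixth powers, and substitute into the folded integrals of \Cref{lem:csin-theta-form}. The paper first reads off \(\vartheta_2(ir)=2q^{1/4}\psi(q^2)\) from the series and then proves \(\psi(q^2)\vartheta_4(ir)=\psi(-q)^2\), whereas you multiply the two product forms of \(\vartheta_2\) and \(\vartheta_4\) directly; the difference is only cosmetic.
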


\begin{proof}
The series definition gives immediately
\[
\vartheta_2(ir)
=
2q^{1/4}\sum_{m=0}^{\infty}q^{m(m+1)}
=
2q^{1/4}\psi(q^2).
\]
We also need the product identity
\(\psi(q^2)\vartheta_4(ir)=\psi(-q)^2\).
For completeness we verify it from Jacobi's triple product. Let
\((a;q)_\infty=\prod_{m=0}^{\infty}(1-aq^m)\).
The standard products are
\(\psi(Q)=\frac{(Q^2;Q^2)_\infty}{(Q;Q^2)_\infty}\),
and
\(\vartheta_4(ir) = (q^2;q^2)_\infty(q;q^2)_\infty^2\).
Therefore
\[
\psi(q^2)\vartheta_4(ir)
=
\frac{(q^4;q^4)_\infty}{(q^2;q^4)_\infty}
(q^2;q^2)_\infty(q;q^2)_\infty^2.
\]
Since
\((q^2;q^2)_\infty=(q^2;q^4)_\infty(q^4;q^4)_\infty\),
we get
\(\psi(q^2)\vartheta_4(ir) = (q^4;q^4)_\infty^2(q;q^2)_\infty^2\).
On the other hand,
\(\psi(-q) = \frac{(q^2;q^2)_\infty}{(-q;q^2)_\infty}\).
Using
\((q;q^2)_\infty(-q;q^2)_\infty=(q^2;q^4)_\infty\),
we obtain
\(\psi(-q) = (q^4;q^4)_\infty(q;q^2)_\infty\).
Thus
\(\psi(q^2)\vartheta_4(ir)=\psi(-q)^2\).
Combining this with
\(\vartheta_2(ir)=2q^{1/4}\psi(q^2)\)
gives
\[
\vartheta_2(ir)^4\vartheta_4(ir)^4
=
16q\,\psi(q^2)^4\vartheta_4(ir)^4
=
16q\,\psi(-q)^8,
\]
and
\[
\vartheta_2(ir)^6\vartheta_4(ir)^6
=
64q^{3/2}\psi(q^2)^6\vartheta_4(ir)^6
=
64q^{3/2}\psi(-q)^{12}.
\]
Substitution into the folded theta integrals of Lemma~\ref{lem:csin-theta-form}
gives the two displayed integral formulas.
\end{proof}

\begin{proposition}[Explicit coefficient formula for \(c_{\sin}\)]
\label{prop:csin-explicit-series}
Let
\[
\psi(Q)^8=\sum_{n=0}^{\infty}B_nQ^n,
\;
\psi(Q)^{12}=\sum_{n=0}^{\infty}A_nQ^n .
\]
Then
\(c_{\sin} = \frac{\widehat M_{\sin}}{\widehat I_{\sin}^2}\),
where
\[
\widehat I_{\sin}
=
64\mathrm{e}^{-\pi}
\sum_{n=0}^{\infty}
B_n(-\mathrm{e}^{-\pi})^n
\left[
\frac{1}{\pi(n+1)}
+
\frac{1}{\pi^2(n+1)^2}
\right],
\]
and
\[
\widehat M_{\sin}
=
512\mathrm{e}^{-3\pi/2}
\sum_{n=0}^{\infty}
A_n(-\mathrm{e}^{-\pi})^n
\left[
\frac{1}{\pi(n+\frac32)}
+
\frac{2}{\pi^2(n+\frac32)^2}
+
\frac{2}{\pi^3(n+\frac32)^3}
\right].
\]
Moreover,
\[
B_n
=
\sum_{\substack{d\mid n+1\\ (n+1)/d\ \mathrm{odd}}}d^3,
\]
and
\(A_n = \frac{\sigma_5(2n+3)-\Omega_{n+1}}{256}\),
where
\[
\sigma_5(m)=\sum_{d\mid m}d^5,
\;
\prod_{m=1}^{\infty}(1-Q^m)^{12}
=
\sum_{\ell=0}^{\infty}\Omega_\ell Q^\ell .
\]
Thus \(\widehat I_{\sin}\) is an explicit Eisenstein series value, while \(\widehat M_{\sin}\) is an explicit
Eisenstein-minus-cusp Eichler value at the nome \(-\mathrm{e}^{-\pi}\).
\end{proposition}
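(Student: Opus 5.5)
The plan has three parts: integrate the folded formulas of \Cref{lem:csin-psi-expansion} term by term, identify the two coefficient sequences from classical sum-of-squares/triangular-number identities, and then read off $c_{\sin}=\widehat M_{\sin}/\widehat I_{\sin}^2$ from \Cref{lem:csin-legendre-reduction}.

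\textbf{Integration.} Put $q=\mathrm{e}^{-\pi r}$ with $r\ge1$. Expanding $\psi(-q)^8=\sum_n B_n(-1)^nq^n$ gives
\[
\widehat I_{\sin}=64\sum_n B_n(-1)^n\int_1^\infty r\,\mathrm{e}^{-\pi(n+1)r}\,dr.
\]
Interchanging sum and integral is justified by absolute convergence. Indeed, $B_n$ grows polynomially, since $B_n\le\sigma_3(n+1)$, so $\sum_n|B_n|\mathrm{e}^{-\pi n r}$ converges uniformly for $r\ge1$; by Tonelli the interchange is legitimate. The same holds for $A_n$, once we know it grows like $\sigma_5$. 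Each integral is elementary:
\[
\int_1^\infty r\,\mathrm{e}^{-ar}\,dr=\mathrm{e}^{-a}\Bigl(\tfrac1a+\tfrac1{a^2}\Bigr),
\qquad
\int_1^\infty r^2\mathrm{e}^{-ar}\,dr=\mathrm{e}^{-a}\Bigl(\tfrac1a+\tfrac2{a^2}+\tfrac2{a^3}\Bigr).
\]
Take $a=\pi(n+1)$ for $\widehat I_{\sin}$ and $a=\pi(n+\frac32)$ for $\widehat M_{\sin}$. Factoring out $\mathrm{e}^{-\pi}$, respectively $\mathrm{e}^{-3\pi/2}$, leaves the powers $(-\mathrm{e}^{-\pi})^n$ and reproduces both displayed series exactly.

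\textbf{Coefficients $B_n$.} I would use $q\psi(q^2)^8=\sum_{m\ge0}\sigma_3(2m+1)q^{2m+1}$, the classical eight-triangular-numbers formula (Legendre/Ramanujan; see \cite{BerndtIII}), which is equivalent to $\vartheta_2^8/256$ being the odd part of an $E_4$-type series. This yields $\psi(Q)^8=\sum_n\sigma_3(2n+1)Q^n$.

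The displayed divisor expression $\sum_{d\mid n+1,\,(n+1)/d\ \mathrm{odd}}d^3$ must be reconciled with $\sigma_3(2n+1)$. These coincide only after the correct index bookkeeping, so I would check the normalization carefully, including small cases. For $n=0$ the divisor sum gives $1=B_0$, and for $n=1$ it gives $8$, which matches $\psi^8=1+8Q+\cdots$. Since $\psi^8=1+8Q+28Q^2+\cdots$ and $\sigma_3(3)=28$, the formula $B_n=\sigma_3(2n+1)$ also matches.

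The divisor form should then arise from a different route: write $q\psi(-q)^8$ via $\vartheta_2^4\vartheta_4^4/16$, and use $\vartheta_2^4\vartheta_4^4$ as the weight-4 form for $\Gamma_0(4)$ with the known expansion
\[
\vartheta_2^4\vartheta_4^4=16\sum_{m\ge1}(-1)^{m+1}\Bigl(\sum_{d\mid m,\ m/d\ \mathrm{odd}}d^3\Bigr)q^m.
\]
The sign pattern $(-1)^n$ is absorbed into $(-\mathrm{e}^{-\pi})^n$. I would verify this identity by comparing enough coefficients against the Sturm bound for $M_4(\Gamma_0(4))$, whose dimension is 3, after reconciling which of the two expressions is intended.

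\textbf{Coefficients $A_n$.} The form $q^{3/2}\psi(-q)^{12}$, equivalently $\vartheta_2^6\vartheta_4^6/64$, is a weight-6 form. Its $\mathrm{Eisenstein}$ part is the odd-divisor series $\sigma_5(2n+3)/256$, and its cusp part is $\eta^{12}$-type, namely $\prod_m(1-Q^m)^{12}$ evaluated at the appropriate argument. I would decompose it in $M_6(\Gamma_0(4))$, or in the space for the relevant level, as Eisenstein plus a multiple of $\eta(2\tau)^{12}$, fixing the constants by matching the first few coefficients, again up to the Sturm bound. The sign conventions between $\psi(Q)$ and $\psi(-Q)$ must be tracked through this step.

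This modular-forms identification, in particular getting the cusp correction $\Omega_{n+1}$ and all the signs right, is the main obstacle. The integration step and the final quotient are routine.
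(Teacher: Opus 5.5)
Your skeleton matches the paper's proof. You integrate the folded $\psi$-integrals term by term, using the two elementary integrals and factoring out $\mathrm{e}^{-\pi}$ and $\mathrm{e}^{-3\pi/2}$. You then invoke the triangular-number formulas, and read off $c_{\sin}=\widehat M_{\sin}/\widehat I_{\sin}^2$ from the Legendre reduction. The paper simply cites the triangular-number formulas; your attempt to derive them is where the proposal breaks.

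\textbf{The $B_n$ step.} The identity you lead with, $\psi(Q)^8=\sum_n\sigma_3(2n+1)Q^n$ (equivalently $q\psi(q^2)^8=\sum_m\sigma_3(2m+1)q^{2m+1}$), is false: $B_1=8$ but $\sigma_3(3)=28$. Your small-case check compared $\sigma_3(3)$ with the coefficient of $Q^2$ instead of $Q^1$.

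It is not an indexing issue either. When $n+1$ is odd, every quotient $(n+1)/d$ is odd, so $\sigma_3(2k+1)=B_{2k}$. Thus $\sigma_3(2n+1)$ only reproduces the even-indexed coefficients. For odd $n$ the odd-quotient restriction genuinely matters: $B_3=4^3=64$ and $B_5=2^3+6^3=224$, whereas $\sigma_3(7)=344$ and $\sigma_3(11)=1332$.

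What you are recalling is Legendre's four-triangular-number theorem, $q\psi(q^2)^4=\sum_m\sigma_1(2m+1)q^{2m+1}$. Its ``odd part of an Eisenstein series'' shape does not persist to eight squares. The correct formula is $B_n=\sigma_3(n+1)-\sigma_3\bigl(\tfrac{n+1}{2}\bigr)$, where the second term is absent for even $n$. This is exactly the displayed divisor sum, and it is equivalent to your second expansion of $\vartheta_2^4\vartheta_4^4$. That route should replace the first one, not be left for ``reconciliation''.

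\textbf{A clean proof of both coefficient formulas.} Take $q=\mathrm{e}^{2\pi i\tau}$ and the Dedekind eta function.
\begin{itemize}
\item For $B_n$: $q^2\psi(q^2)^8=\eta(4\tau)^{16}/\eta(2\tau)^8$ and $\sum_{m\ge1}\bigl(\sigma_3(m)-\sigma_3(m/2)\bigr)q^{2m}=\bigl(E_4(2\tau)-E_4(4\tau)\bigr)/240$ both lie in $M_4(\Gamma_0(4))$. The Sturm bound is $\tfrac4{12}\cdot 6=2$, so agreement through $q^2$ suffices.
\item For $A_n$: $q^3\psi(q^2)^{12}=\eta(4\tau)^{24}/\eta(2\tau)^{12}=\tfrac1{256}\bigl(\sum_{m\ \mathrm{odd}}\sigma_5(m)q^m-\eta(2\tau)^{12}\bigr)$ in $M_6(\Gamma_0(4))$, checked through $q^3$. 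Reading off the coefficient of $q^{2n+3}$ gives $(\sigma_5(2n+3)-\Omega_{n+1})/256$. So your $A_n$ outline is correct once made concrete.
\end{itemize}
Proving these as identities of formal power series in $Q$ also removes the sign tracking you worry about. The signs enter only when you substitute $Q=-\mathrm{e}^{-\pi r}$.

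\textbf{Termwise integration.} Your justification is circular: $B_n\le\sigma_3(n+1)$ and ``$A_n$ grows like $\sigma_5$'' use the formulas being proved. Instead, note that $A_n,B_n\ge0$ and that $\psi^8,\psi^{12}$ converge on $[0,1)$. Then the Weierstrass test with majorants $B_n\mathrm{e}^{-\pi n}$ (resp.\ $A_n\mathrm{e}^{-\pi n}$) applies on $r\ge1$, which is what the paper uses.
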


\begin{proof}
By Lemma~\ref{lem:csin-psi-expansion},
\[
\widehat I_{\sin}
=
64\int_1^\infty
r \mathrm{e}^{-\pi r}\psi(-\mathrm{e}^{-\pi r})^8\,dr.
\]
For \(r\ge1\), the variable
\(Q=-\mathrm{e}^{-\pi r}\)
satisfies \(|Q|\le \mathrm{e}^{-\pi}<1\). Hence the power series
\(\psi(Q)^8=\sum_{n=0}^{\infty}B_nQ^n\)
converges absolutely and uniformly on the integration range. Therefore termwise integration is justified, and
\[
\widehat I_{\sin}
=
64
\sum_{n=0}^{\infty}
B_n(-1)^n
\int_1^\infty
r \mathrm{e}^{-\pi(n+1)r}\,dr.
\]
For \(\alpha>0\),
\[
\int_1^\infty r \mathrm{e}^{-\alpha r}\,dr
=
\mathrm{e}^{-\alpha}
\left(
\frac1\alpha+\frac1{\alpha^2}
\right).
\]
Taking \(\alpha=\pi(n+1)\), we get
\[
\widehat I_{\sin}
=
64\mathrm{e}^{-\pi}
\sum_{n=0}^{\infty}
B_n(-\mathrm{e}^{-\pi})^n
\left[
\frac{1}{\pi(n+1)}
+
\frac{1}{\pi^2(n+1)^2}
\right].
\]

The proof for \(\widehat M_{\sin}\) is identical. From Lemma~\ref{lem:csin-psi-expansion},
\[
\widehat M_{\sin}
=
512\int_1^\infty
r^2 \mathrm{e}^{-3\pi r/2}\psi(-\mathrm{e}^{-\pi r})^{12}\,dr.
\]
Using
\(\psi(Q)^{12}=\sum_{n=0}^{\infty}A_nQ^n\)
and uniform absolute convergence on \(|Q|\le \mathrm{e}^{-\pi}\), we integrate term by term:
\[
\widehat M_{\sin}
=
512
\sum_{n=0}^{\infty}
A_n(-1)^n
\int_1^\infty
r^2 \mathrm{e}^{-\pi(n+3/2)r}\,dr.
\]
For \(\alpha>0\),
\[
\int_1^\infty r^2 \mathrm{e}^{-\alpha r}\,dr
=
\mathrm{e}^{-\alpha}
\left(
\frac1\alpha+\frac2{\alpha^2}+\frac2{\alpha^3}
\right).
\]
Taking \(\alpha=\pi(n+3/2)\), we obtain
\[
\widehat M_{\sin}
=
512\mathrm{e}^{-3\pi/2}
\sum_{n=0}^{\infty}
A_n(-\mathrm{e}^{-\pi})^n
\left[
\frac{1}{\pi(n+\frac32)}
+
\frac{2}{\pi^2(n+\frac32)^2}
+
\frac{2}{\pi^3(n+\frac32)^3}
\right].
\]

The coefficient \(B_n\) counts representations of \(n\) as a sum of eight triangular numbers,
because
\[
\psi(Q)^8
=
\sum_{x_1,\ldots,x_8\ge0}
Q^{T_{x_1}+\cdots+T_{x_8}},
\;
T_x=\frac{x(x+1)}2.
\]
The classical eight-triangular-number formula gives
\[
B_n
=
\sum_{\substack{d\mid n+1\\ (n+1)/d\ \mathrm{odd}}}d^3 .
\]
Similarly, \(A_n\) counts representations of \(n\) as a sum of twelve triangular numbers, and the
classical twelve-triangular-number formula gives
\(A_n = \frac{\sigma_5(2n+3)-\Omega_{n+1}}{256}\),
with
\[
\prod_{m=1}^{\infty}(1-Q^m)^{12}
=
\sum_{\ell=0}^{\infty}\Omega_\ell Q^\ell .
\]
These triangular-number formulas are standard consequences of the modular-form decomposition
of \(\psi(Q)^8\) and \(\psi(Q)^{12}\); see \cite{OnoRobinsWahl}.
The identity \(c_{\sin}=\widehat M_{\sin}/\widehat I_{\sin}^2\) was already established in
Lemma~\ref{lem:csin-legendre-reduction}. This completes the proof.
\end{proof}

Numerically, the above series gives
\(c_{\sin} = 1.142692179815791439359223576653395856079292913\ldots\),
and hence
\(2\sqrt{c_{\sin}} = 2.137935620935103311614798789689611873173266\ldots\).

\section{Additional experimental diagnostics}
\label{app:experiments}

This appendix records the diagnostics behind the main experimental figures.  The same exact prefix and sine--Jacobi implementations are used throughout.

\subsection{Stochastic collapse across intermediate horizons}
\label{app:experiments-stochastic-collapse}

The stochastic endpoint-coupled oracle of Subsection~\ref{subsec:experiments-frontier} is evaluated over the full grid.  The underlying deterministic task is always the same diagonal quadratic \(f_H(x)=\frac12x^\top Hx\) with \(H=\operatorname{diag}(1,\sqrt{120},120)\), initial vector proportional to \((1,0.25,1)\), and stochastic Hessian channel \(B=e_1e_3^\top+e_3e_1^\top\).  The only quantities varied in this diagnostic are the horizon and the batch/noise budget:
\(N\in\{16,18,\ldots,100\}, \; b\in\{64,72,80,\ldots,2048\}\).
The natural collapsed scale is
\(x=\frac{N^3}{b\Delta^2}\).
The leading first-variation variance predicts a contribution proportional to \(x\).  Since the stress test uses \(\sigma=2\) and includes moderately small batches and large horizons, the measured overhead also contains the next perturbative contribution.  The fit in Figure~\ref{fig:app-stochastic-collapse} therefore uses
\(\mathrm{oh}_R(N,b)=c_{1,R}x+c_{2,R}x^2\).
The fitted coefficients are
\[
\begin{aligned}
        c_{1,{\rm pref}}&=20.59, & c_{2,{\rm pref}}&=25.51,\\
        c_{1,{\rm sine}}&=16.84, & c_{2,{\rm sine}}&=24.24.
\end{aligned}
\]
The pooled robust median slopes are \(21.85\) for prefix and \(18.78\) for sine, giving ratio \(0.860\).  This agrees with the fixed-horizon design-tail ratio \(0.861\) in Figure~\ref{fig:exp-stochastic}.

\begin{figure}[ht!]
    \centering
    \includegraphics[width=0.75\linewidth]{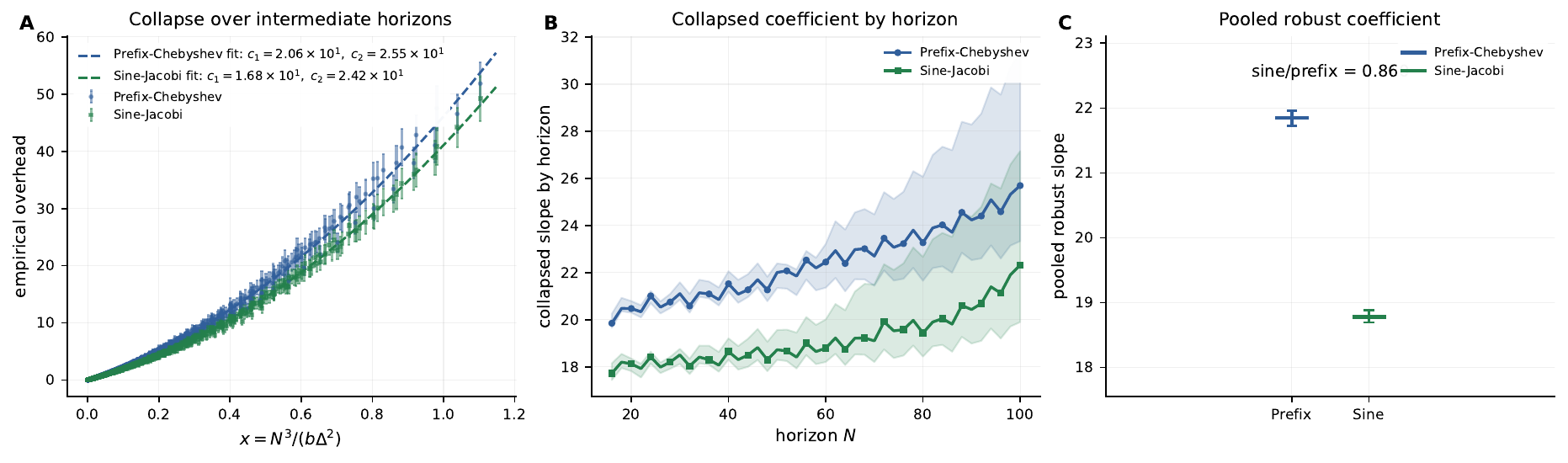}
    \caption{Stochastic curvature collapse over intermediate horizons.  Panel A plots measured overhead against \(N^3/(b\Delta^2)\) and fits the perturbative form \(c_1x+c_2x^2\).  Panel B gives horizon-wise robust collapsed slopes.  Panel C compares the pooled robust coefficients with bootstrap intervals.}
    \label{fig:app-stochastic-collapse}
\end{figure}

The curvature visible in Panel A has the expected sign: the leading variance term is linear in the collapsed scale, and higher insertions contribute positive second-order corrections to the squared norm.  The coefficient gap remains visible in the linear terms and in the pooled robust slopes.  The quadratic coefficients are comparable for the two realizations, which indicates that the main realization-level separation in this stress test is already present in the leading first-variation component.

\subsection{Restart-distribution diagnostics for GLM blocks}
\label{app:experiments-glm-restart-survival}

Figure~\ref{fig:app-glm-restart-survival} gives a distributional view of the
restarted-continuation experiment from Subsection~\ref{subsec:experiments-glm}.
The underlying objective, endpoint-coupled directions, acceptance rule, horizon
cap, and target radius are the same as in Figure~\ref{fig:exp-glm}.  Instead of
plotting the restart statistics as functions of the initial radius, the figure
summarizes their empirical survival functions over the tested starts.  For a
threshold \(t\), the plotted value is the fraction of starts whose diagnostic
value is at least \(t\).  The solid and dashed curves are medians over fixed
logarithmic initial-radius shells, the shaded regions are interquartile ranges
over those shells, and the dotted curves show the pooled empirical survival
functions over all tested starts.

\begin{figure}[ht!]
    \centering
    \includegraphics[width=0.75\linewidth]{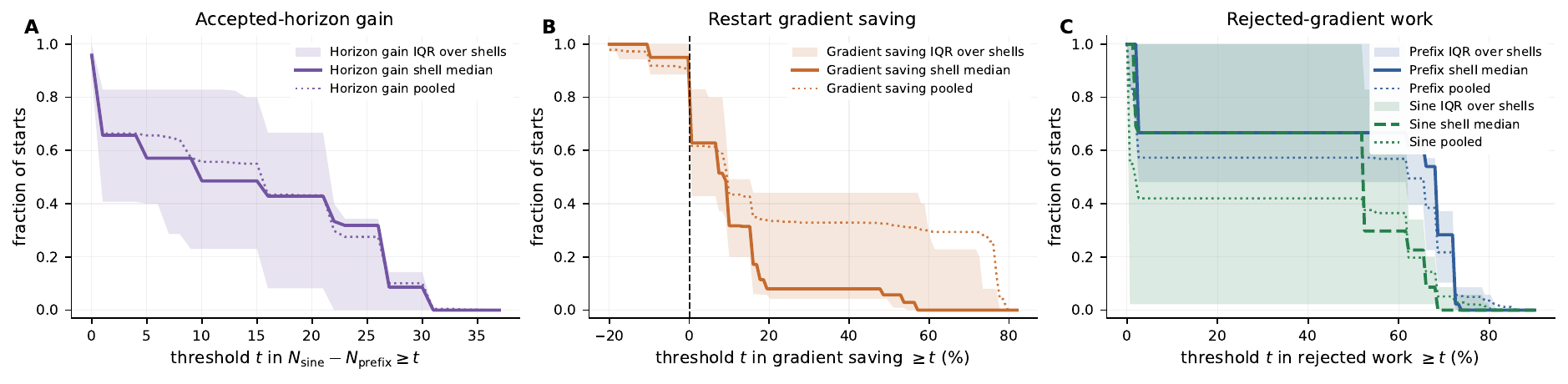}
    \caption{Distributional restart diagnostics for endpoint-coupled logistic GLM
    blocks.  Panel A shows the survival function of the first accepted horizon
    gain.  Panel B shows the survival function of the end-to-end gradient
    saving.  Panel C shows the survival function of the rejected-gradient
    fraction.  Solid and dashed curves are shell-wise medians, shaded regions
    are interquartile ranges over fixed logarithmic initial-radius shells, and
    dotted curves are the pooled empirical survival functions over all tested
    starts.}
    \label{fig:app-glm-restart-survival}
\end{figure}

This distributional view removes the ordering of the initial radii from the
visualization and records the same restart outcomes as empirical proportions.
It shows that the sine--Jacobi realization shifts the tested GLM starts toward
larger accepted horizons, larger end-to-end gradient savings, and smaller
rejected-gradient fractions.  The shell-wise interquartile bands also show that
the effect is not concentrated in a single radius shell.

\subsection{First-variation operator diagnostics}
\label{app:experiments-operator}

Figure~\ref{fig:app-operator} examines the first-variation operator itself at \(N=96\).  This is not a trajectory experiment: it is a direct computation of the kernels from Theorems~\ref{thm:prefix-kernel} and~\ref{thm:jacobi-first-variation-kernel}.  The spectral variables \(\lambda_i,\nu_j\) are placed on a Chebyshev grid in \([\mu,L]\), and the operator is represented as the matrix whose rows are the time profiles
\(\big(K_0^R(\lambda_i,\nu_j),\ldots,K_{N-1}^R(\lambda_i,\nu_j)\big)\).
Panel A plots the normalized singular spectrum.  Panels B and C focus on the worst spectral pair and plot the time-sensitivity density and its cumulative energy.

\begin{figure}[ht!]
    \centering
    \includegraphics[width=0.75\linewidth]{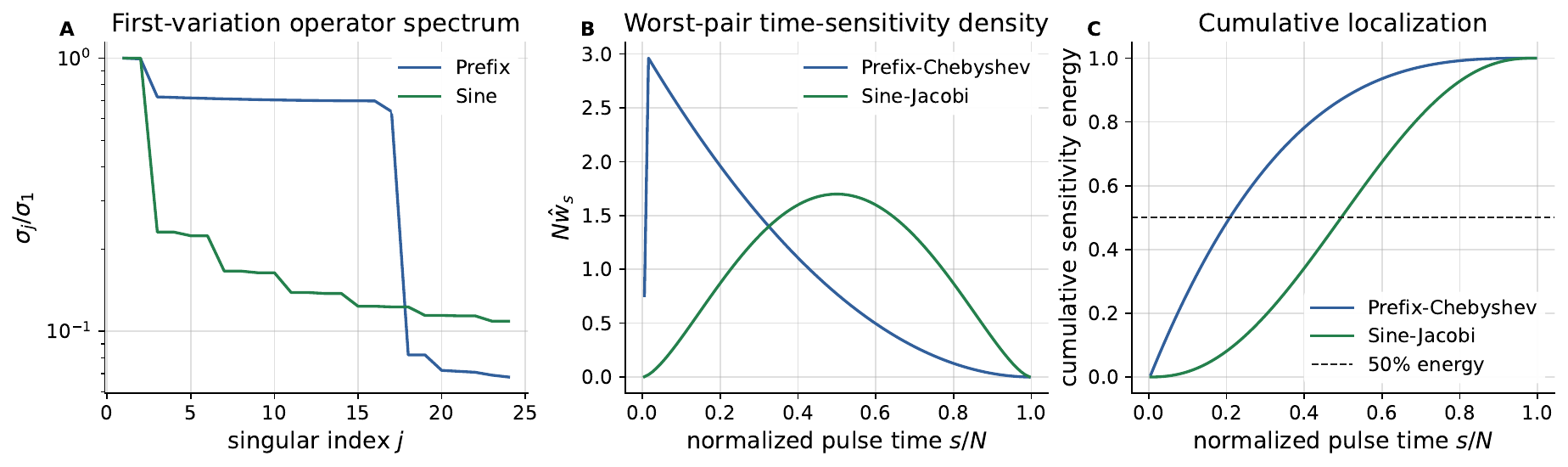}
    \caption{Operator-level first-variation diagnostics at \(N=96\).  Panel A compares normalized singular spectra.  Panel B shows the worst-pair time-sensitivity density \(N\widehat w_s\).  Panel C plots the cumulative time-sensitivity energy.}
    \label{fig:app-operator}
\end{figure}

The operator diagnostic separates two effects.  First, the dominant singular spectrum is more concentrated for the prefix realization, reflecting the stronger endpoint-dominated time-ordered response.  Second, the worst-pair time energy of the sine--Jacobi chain is less concentrated near the boundary times.  This is the mechanism behind the smaller \(\ell_2\)-in-time first-variation gain: the sine weights do not change the terminal Chebyshev polynomial, but they redistribute the intermediate sensitivity profile of the ordered product.

\subsection{Smooth nonlinear transfer family}
\label{app:experiments-nonlinear-transfer}

The nonlinear transfer experiment uses a two-dimensional smooth strongly convex family.  It is designed as a deterministic analogue of endpoint curvature coupling: the quadratic part fixes the same spectral endpoints used in the packet experiment, while the log-cosh ridge terms introduce smooth state-dependent curvature along directions \((1,q_i)\).  The objective is
\[
        f_{\rho,i}(x)
        =\frac12(\lambda x_1^2+\nu x_2^2)
        +\rho\sum_{m=1}^{25}w_{i,m}\tau^2
        \log\cosh\!\left(\frac{x_1+q_i x_2-b_{i,m}}{\tau}\right),
\]
with \(\lambda=200\), \(\nu=8000\), \(\tau=0.20\), horizon \(N=256\), and initial point \((0,1)\).  Both realizations use coefficients calibrated to \([\mu,L]=[200,8000]\).  For profile \(i\), the centers \(b_{i,m}/q_i\) are placed on \(25\) equally spaced points in \([0.05,0.95]\), and the weights are a normalized Gaussian envelope with profile-dependent center and width.  The five triples \((\text{center},\text{width},q_i)\) are
\[
        (0.55,0.13,2.20),\quad (0.62,0.15,2.45),\quad
        (0.70,0.17,2.65),\quad (0.78,0.14,2.85),\quad
        (0.66,0.11,2.55).
\]
The coupling parameter is sampled on \(121\) values of \(\rho\in[90,260]\).  For each \((\rho,i)\), the minimizer is computed by L-BFGS-B with a warm start along the \(\rho\)-grid, and both realizations are run from the same initial point.

\begin{figure}[ht!]
    \centering
    \includegraphics[width=0.75\linewidth]{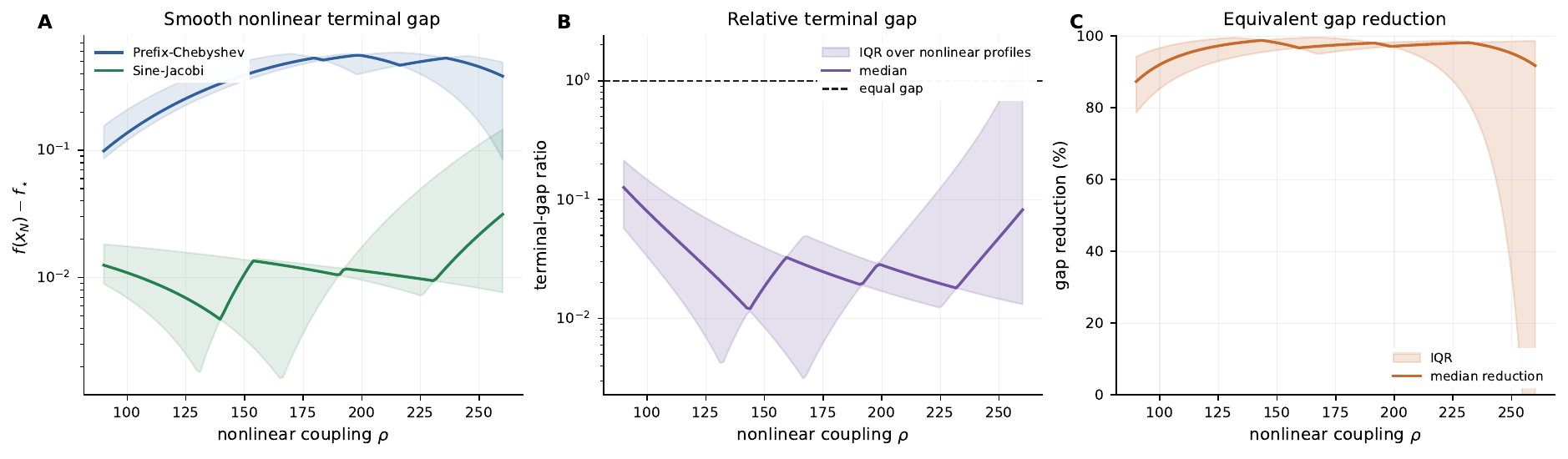}
    \caption{Smooth nonlinear transfer family.  Panel A shows the terminal objective gaps for the two realizations.  Panel B shows the sine/prefix terminal-gap ratio.  Panel C rewrites the same comparison as an equivalent gap reduction.  Curves are medians over five nonlinear profiles and shaded regions are interquartile ranges.}
    \label{fig:app-nonlinear-transfer}
\end{figure}

For each fixed profile, the functions plotted in Figure~\ref{fig:app-nonlinear-transfer} are smooth in \(\rho\).  The visible bends in the median and interquartile curves arise from two deterministic sources.  First, the median over five profiles is a pointwise order statistic; as \(\rho\) changes, the profile attaining the median can switch, producing a piecewise-smooth aggregate even though every individual profile is smooth.  Second, the nonlinear term changes both the minimizer and the local Hessian along the trajectory.  The two realizations still have the same terminal Chebyshev polynomial at the fixed-Hessian linearization, so both terminal gaps vary on comparable \(\rho\)-scales.  The ratio remains well below one over the central range: the overall median sine/prefix gap ratio is \(2.79\cdot10^{-2}\), and the median equivalent gap reduction is above \(87\%\) across the plotted range.  At \(\rho=90\), the median prefix and sine gaps are approximately \(9.89\cdot10^{-2}\) and \(1.25\cdot10^{-2}\); at \(\rho=200.5\), they are approximately \(5.53\cdot10^{-1}\) and \(1.13\cdot10^{-2}\).  The similar large-scale shape of the two terminal-gap curves reflects the common smooth strongly convex family, while the vertical separation reflects the smaller realization-level curvature response of the sine--Jacobi chain.


\begin{thebibliography}{99}
\bibitem{Achieser}
N. I. Achieser, Theory of Approximation. Dover, 1992.

\bibitem{Szego}
G. Szego, Orthogonal Polynomials. American Mathematical Society Colloquium Publications, Vol. 23, 4th ed., 1975.

\bibitem{Trefethen}
L. N. Trefethen, Approximation Theory and Approximation Practice. SIAM, 2013.

\bibitem{WangWongDifference}
Z. Wang and R. Wong,
Asymptotic expansions for second-order linear difference equations with a turning point,
Numerische Mathematik 94 (2003), 147--194.

\bibitem{Ismail}
M. E. H. Ismail, Classical and Quantum Orthogonal Polynomials in One Variable. Cambridge University Press, 2005.

\bibitem{SpiridonovZhedanov}
V. Spiridonov and A. Zhedanov,
\(q\)-Ultraspherical polynomials for \(q\) a root of unity,
Letters in Mathematical Physics 37 (1996), 173--180.

\bibitem{KS98}
R. Koekoek and R. F. Swarttouw, The Askey-scheme of hypergeometric orthogonal polynomials and its $q$-analogue. Delft University of Technology, Report 98-17, 1998.

\bibitem{KLS}
R. Koekoek, P. A. Lesky, and R. F. Swarttouw, Hypergeometric Orthogonal Polynomials and Their $q$-Analogues. Springer, 2010.

\bibitem{Varga}
R. S. Varga, Matrix Iterative Analysis. 2nd ed., Springer, 2000.

\bibitem{Saad}
Y. Saad, Iterative Methods for Sparse Linear Systems. 2nd ed., SIAM, 2003.

\bibitem{Polyak1964}
B. T. Polyak, Some methods of speeding up the convergence of iteration methods.
USSR Computational Mathematics and Mathematical Physics 4 (1964), no. 5, 1--17.

\bibitem{Nesterov1983}
Y. E. Nesterov, A method for solving the convex programming problem with convergence rate
\(O(1/k^2)\). Soviet Mathematics Doklady 27 (1983), 372--376.

\bibitem{NemirovskiYudin}
A. S. Nemirovski and D. B. Yudin, Problem Complexity and Method Efficiency in Optimization.
Wiley, 1983.

\bibitem{Nesterov2018}
Y. Nesterov, Lectures on Convex Optimization. 2nd ed., Springer, 2018.

\bibitem{dAspremontScieurTaylor}
A. d'Aspremont, D. Scieur, and A. Taylor, Acceleration Methods.
Foundations and Trends in Optimization 5 (2021), no. 1--2, 1--245.

\bibitem{TakacTakacScheduledGD}
Slavom\'{\i}r Tak\'{a}\v{c} and Martin Tak\'{a}\v{c},
Acceleration by Step-Size Scheduling: Exact Quadratic Theory and Local Perturbations,
unpublished manuscript, 2026.

\bibitem{DroriTeboulle2014}
Y. Drori and M. Teboulle, Performance of first-order methods for smooth convex minimization:
a novel approach. Mathematical Programming 145 (2014), 451--482.

\bibitem{TaylorHendrickxGlineur2017}
A. B. Taylor, J. M. Hendrickx, and F. Glineur, Smooth strongly convex interpolation and exact
worst-case performance of first-order methods. Mathematical Programming 161 (2017), 307--345.

\bibitem{KimFessler2016}
D. Kim and J. A. Fessler, Optimized first-order methods for smooth convex minimization.
Mathematical Programming 159 (2016), 81--107.

\bibitem{DevolderGlineurNesterov2014}
O. Devolder, F. Glineur, and Y. Nesterov, First-order methods of smooth convex optimization
with inexact oracle. Mathematical Programming 146 (2014), 37--75.

\bibitem{CohenDiakonikolasOrecchia2018}
M. B. Cohen, J. Diakonikolas, and L. Orecchia, On acceleration with noise-corrupted gradients.
Proceedings of the 35th International Conference on Machine Learning, PMLR 80 (2018), 1019--1028.

\bibitem{LessardRechtPackard2016}
L. Lessard, B. Recht, and A. Packard, Analysis and design of optimization algorithms via integral
quadratic constraints. SIAM Journal on Optimization 26 (2016), no. 1, 57--95.

\bibitem{AybatFallahGurbuzbalabanOzdaglar2020}
N. S. Aybat, A. Fallah, M. Gurbuzbalaban, and A. Ozdaglar, Robust accelerated gradient methods
for smooth strongly convex functions. SIAM Journal on Optimization 30 (2020), no. 1, 717--751.

\bibitem{MohammadiRazaviyaynJovanovic2021}
H. Mohammadi, M. Razaviyayn, and M. R. Jovanovic, Robustness of accelerated first-order
algorithms for strongly convex optimization problems. IEEE Transactions on Automatic Control
66 (2021), no. 6, 2480--2495.

\bibitem{Gautschi}
W. Gautschi, Orthogonal Polynomials: Computation and Approximation. Oxford University Press, 2004.

\bibitem{Teschl}
G. Teschl, Jacobi Operators and Completely Integrable Nonlinear Lattices.
American Mathematical Society, 2000.

\bibitem{DLMF}
F. W. J. Olver, A. B. Olde Daalhuis, D. W. Lozier, B. I. Schneider, R. F. Boisvert, C. W. Clark, B. R. Miller, B. V. Saunders, H. S. Cohl, and M. A. McClain, eds.,
NIST Digital Library of Mathematical Functions. Release 1.2.6, \url{https://dlmf.nist.gov/}.

\bibitem{WhittakerWatson}
E. T. Whittaker and G. N. Watson,
A Course of Modern Analysis. 4th ed., Cambridge University Press, 1927.

\bibitem{BerndtIII}
B. C. Berndt,
Ramanujan's Notebooks, Part III. Springer, 1991.

\bibitem{OnoRobinsWahl}
K. Ono, S. Robins, and P. T. Wahl,
On the representation of integers as sums of triangular numbers.
Aequationes Mathematicae 50 (1995), 73--94.
\end{thebibliography}
\end{document}